\newtheorem{theorem}{Theorem}[section]
\newtheorem{theoremletter}{Theorem}
\newtheorem{proposition}[theorem]{Proposition}
\newtheorem{corollary}[theorem]{Corollary}
\newtheorem{lemma}[theorem]{Lemma}
\theoremstyle{definition}
\newtheorem{definition}[theorem]{Definition}
\newtheorem{question}[theorem]{Question}
\newtheorem{remark}[theorem]{Remark}
\renewcommand{\AA}{\mathbb{A} }
\newcommand{\CC}{\mathbb{C} }
\newcommand{\LL}{\mathbb{L} }
\newcommand{\NN}{\mathbb{N} }
\newcommand{\PP}{\mathbb{P} }
\newcommand{\RR}{\mathbb{R} }
\newcommand{\ZZ}{\mathbb{Z} }
\newcommand{\cE}{\mathcal{E} }
\newcommand{\cF}{\mathcal{F} }
\newcommand{\cM}{\mathcal{M} }
\newcommand{\cO}{\mathcal{O} }
\newcommand{\cS}{\mathcal{S} }
\newcommand{\rD}{\mathrm{D} }
\newcommand{\rH}{\mathrm{H} }
\newcommand{\rM}{\mathrm{M} }
\newcommand{\ba}{\mathbf{a} }
\newcommand{\bfb}{\mathbf{b} }
\newcommand{\bc}{\mathbf{c} }
\newcommand{\bp}{\mathbf{p} }
\newcommand{\bq}{\mathbf{q} }
\newcommand{\bQ}{\mathbf{Q} }
\newcommand{\bR}{\mathbf{R} }
\def\rHom{\mathrm{Hom} }
\def\SL{\mathrm{SL}}
\def\Gr{\mathrm{Gr}}
\def\Fl{\mathrm{Fl}}
\def\git{/\!/ }
\def\Pic{\mathrm{Pic} }
\def\im{\mathrm{im}\, }
\def\Hilb{\mathrm{Hilb}}
\def\pdeg{\mathrm{pardeg}}
\def\Jac{\mathrm{Jac}}
\def\Fdim{\mathrm{Fdim}\;}
\begin{document}

\title[Derived categories of symmetric products and moduli spaces of bundles]{Derived categories of symmetric products and \\ moduli spaces of vector bundles on a curve}
\date{\today}

%\keywords{}
%\subjclass[2010]{05C05, 14M15, 14N10, 14T15}

\author{Kyoung-Seog Lee}
\address{Kyoung-Seog Lee, Department of Mathematics, POSTECH, 77, Cheongam-ro, Nam-gu, Pohang-si, Gyeongsangbuk-do, 37673, Korea} \email{kyoungseog@postech.ac.kr}

\author{Han-Bom Moon}
\address{Han-Bom Moon, Department of Mathematics, Fordham University, New York, NY 10023}
\email{hmoon8@fordham.edu}

\maketitle

\begin{abstract}
We show that the derived categories of symmetric products of a curve are embedded into the derived categories of the moduli spaces of vector bundles of large ranks on the curve. It supports a prediction of the existence of a semiorthogonal decomposition of the derived category of the moduli space, expected by a motivic computation. As an application, we show that all Jacobian varieties, symmetric products of curves and all principally polarized abelian varieties of dimension at most three, are Fano visitors. We also obtain similar results for motives.
\end{abstract}

\section{Introduction}\label{sec:intro}

Let $X$ be a smooth projective curve of genus $g \ge 2$, and $L$ be a line bundle on $X$ of degree $d$. The moduli space $\rM_{X}(r, L)$ of rank $r$, determinant $L$ semistable vector bundles on $X$ is one of the most intensively studied moduli spaces in the past decades. When $(r, d) = 1$, it is a smooth projective Fano variety of dimension $(r^{2}-1)(g-1)$ of index two \cite{Ram73}. 

\subsection{Derived category of the moduli space of vector bundles}

Since the pioneering works of Narasimhan in \cite{Nar17, Nar18} and Fonarev-Kuznetsov in \cite{FK18}, many works have been done to understand the bounded derived category of coherent sheaves $\rD^{b}(\rM_{X}(r, L))$ of the moduli space, in particular its semiorthogonal decomposition. Narasimhan, and independently Belmans--Galkin--Mukhopadhyay, conjectured that in the rank two case, $\rD^{b}(\rM_{X}(2, L))$ has an explicit semiorthogonal decomposition \cite{Lee18, BGM23}, where all indecomposable components are equivalent to $\rD^{b}(X_{n})$, where $X_{n} = X^{n}/S_{n}$. A proof of this conjecture was recently announced by Tevelev and Torres \cite{TT21, Tev23}. 

For the higher rank case, based on motivic computation in \cite{GL20}, it has been conjectured that $\rD^{b}(\rM_{X}(r, L))$ has a semiorthogonal decomposition whose components can be described in terms of symmetric products $X_{n}$ and its Jacobian $\mathrm{Jac}(X)$ \cite[Conjecture 1.3]{GL20}. As the initial step, building upon earlier works of Narasimhan in \cite{Nar17, Nar18}, Fonarev-Kuznetsov in \cite{FK18}, Belmans-Mukhopadhyay in \cite{BM19}, we proved that $\rD^{b}(X)$ can be embedded into $\rD^{b}(\rM_{X}(r, L))$ in \cite{LM21, LM23} for any curve $X$, rank $r \geq 2$ and coprime degree $d$. In this paper, we extend this result to symmetric products. 

\begin{theoremletter}\label{thm:embedding}
Suppose that $r > 2n$. Then $\rD^{b}(X_{n})$ is embedded into $\rD^{b}(\rM_{X}(r, L))$. 
\end{theoremletter}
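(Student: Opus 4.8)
The plan is to exhibit a fully faithful Fourier--Mukai functor $\Phi\colon\rD^b(X_n)\to\rD^b(\rM_X(r,L))$, generalising the functor used for $n=1$ by Fonarev--Kuznetsov and Lee--Moon \cite{FK18,LM21,LM23}. We may assume $(r,d)=1$, so $\rM_X(r,L)$ is a fine moduli space carrying a universal bundle $\cE$ on $X\times\rM_X(r,L)$ (unique up to twist by a line bundle from $\rM_X(r,L)$, and we fix a convenient normalisation). The other input is the universal effective divisor $\mathcal{D}\subset X\times X_n$; pairing $\cE$ with $\mathcal{D}$ yields a Hecke-type correspondence over $X_n$, and $\Phi$ is integration against a kernel $\mathcal{K}$ on $X_n\times\rM_X(r,L)$ built from the universal length-$n$ modification $0\to\cE'\to\cE\to\cT\to 0$ along $\mathcal{D}$. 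The kernel must be chosen with care: for a reduced divisor $D=x_1+\cdots+x_n$ the object $\Phi(\cO_D)$ has to be \emph{indecomposable} --- it is a Koszul-type complex (equivalently, the structure sheaf of a natural incidence locus in $\rM_X(r,L)$ of codimension $\le n$) assembling the fibres $\cE_{x_1},\dots,\cE_{x_n}$, and in particular it is \emph{not} the na\"ive tautological object $E\mapsto E|_D\cong\bigoplus_i E_{x_i}$, whose endomorphism algebra is $k^{n}$ rather than $k$. For $n=1$ the construction reduces to the functor of \cite{FK18,LM21,LM23}.

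By Bridgeland's criterion, $\Phi$ is fully faithful once, for all closed points $D,D'\in X_n$, one has (i) $\mathrm{RHom}_{\rM_X(r,L)}(\Phi(\cO_D),\Phi(\cO_{D'}))=0$ whenever $D\ne D'$; (ii) $\mathrm{Hom}_{\rM_X(r,L)}(\Phi(\cO_D),\Phi(\cO_D))=k$; and (iii) $\Ext^i_{\rM_X(r,L)}(\Phi(\cO_D),\Phi(\cO_D))=0$ for $i<0$ and for $i>n$. I would verify (i)--(iii) through the Hecke/Quot correspondence. Let $\cQ$ be the relative Quot scheme of length-$n$ torsion quotients of $\cE$ over $\rM_X(r,L)$; since $X$ is a smooth curve, $\cQ$ is smooth and projective over $\rM_X(r,L)$ of relative dimension $rn$, and it carries the projection $q\colon\cQ\to\rM_X(r,L)$, the support-divisor map $\sigma\colon\cQ\to X_n$, and a rational map $h$ from $\cQ$ to the moduli space $\rM_X(r,d-n)$ sending a quotient to the kernel bundle $\cE'$, defined on the open locus where $\cE'$ is semistable. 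The morphism $(q,\sigma)\colon\cQ\to\rM_X(r,L)\times X_n$ is a $(\PP^{r-1})^n$-bundle over the locus of reduced divisors. Using these projective-bundle structures I would rewrite the $\Ext$-groups in (i)--(iii) as cohomology of natural bundles on $\cQ$, push them down along $h$, and run an induction whose step replaces the pair $(X_n,\rM_X(r,L))$ by $(X_{n-1},\rM_X(r,d-n))$ (or decreases the rank), with base case $n=1$ supplied by \cite{FK18,LM21,LM23}; the required vanishing of higher direct images and of $\rH^{>0}$ of the twisted universal bundles that appear is a Kodaira/Le Potier-type statement on the Fano varieties $\rM_X(r,\bullet)$, and the hypothesis $r>2n$ is precisely what keeps all of these inputs valid.

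The heart of the argument, and the main obstacle, is this $\Ext$-computation, for two intertwined reasons. First, $\cQ$ is only \emph{generically} a projective bundle over $\rM_X(r,L)\times X_n$: over the diagonal and the non-reduced strata of $X_n$ its fibres jump, so one must bound the contribution of those strata and, crucially, guarantee that the generic length-$n$ Hecke modification of a stable bundle of rank $r$ is again semistable --- this is false for small $r$, and $r>2n$ is the clean hypothesis that fixes it, so that $h$ is defined on a large enough open subset of $\cQ$ and the push--pull identities close up. Second, verifying that the chosen kernel really does make $\Phi(\cO_D)$ indecomposable with $\mathrm{Hom}(\Phi(\cO_D),\Phi(\cO_D))=k$ and kills every cross-$\Ext$ in \emph{all} degrees rests on the simplicity and the precise $\Ext^\bullet$-behaviour of the bundles $\cE_x$ on $\rM_X(r,L)$ established in the $n=1$ theory, combined with a spectral-sequence (or filtration) argument along $\mathcal{D}$. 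Once these two points are settled, the remaining ingredients --- the construction of $\cQ$, the projective-bundle semiorthogonal decompositions, and the formal reductions --- are routine.
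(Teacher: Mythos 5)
Your overall strategy --- a Fourier--Mukai kernel on $X_n\times\rM_X(r,L)$ checked against the Bondal--Orlov pointwise criterion --- matches the paper, but the two load-bearing steps are left as declarations of intent, and where you do commit to specifics they diverge from what actually works. First, the kernel. The paper does not use a Hecke/Quot-type object at all: following Tevelev--Torres and Lee--Narasimhan, it takes $\cF=(q_*(\sigma^*\cE)^{\otimes n})^{S_n}$, the $S_n$-invariant push-forward of the $n$-fold tensor product of a Poincar\'e bundle along the quotient map $X^n\times\rM_X(r,L)\to X_n\times\rM_X(r,L)$. This is a vector bundle of rank $r^n$, and its restriction $\cF_{\bp}$ over any $\bp\in X_n$ is a \emph{deformation} of $\bigotimes_i\cE_{p_i}$ --- a tensor product, not the direct sum $\bigoplus_i\cE_{p_i}$ you rightly reject, and not a Koszul complex on an incidence locus. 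That deformation statement is what lets semicontinuity reduce every Bondal--Orlov condition to cohomology of bundles of the form $\bigotimes_i S_{\lambda_i}\cE_{p_i}\otimes\bigotimes_j S_{\mu_j}\cE_{q_j}^*$ via Schur--Weyl duality and the Littlewood--Richardson rule. Your kernel is never pinned down (``chosen with care'', ``Koszul-type''), so there is nothing to compute with; producing an object with $\mathrm{End}=k$ and the right cross-$\Ext$ vanishing is precisely the content of the theorem, not a preliminary.

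Second, the vanishing. You assert that the needed $\Ext$-vanishing is ``a Kodaira/Le Potier-type statement on the Fano varieties $\rM_X(r,\bullet)$''. It is not: the conditions include $\rH^0(\rM_X(r,L),\cF_{\bp}\otimes\cF_{\bq}^*)=0$ for $\bp\ne\bq$ and vanishing only in degrees $i>n$ (not $i>0$) in the diagonal case, neither of which is a positivity statement. The paper's actual inputs are (i) Teleman's Borel--Weil--Bott theorem on the moduli \emph{stack} $\cM_X(r,\cO)$, which computes all cohomology of $\bigotimes S_{\lambda_i}\cE_{p_i}$ in terms of Weyl-alcove combinatorics; (ii) a combinatorial analysis showing that when $r>2n$ every weight $\nu$ occurring in $S_\lambda\cE_p\otimes S_\mu\cE_p^*$ is either singular or of length at most $n$, and every $\lambda\vdash n$ is itself singular --- this, and a weight bound in the quantization step, is where $r>2n$ enters, not semistability of Hecke modifications; and (iii) Halpern-Leistner's quantization theorem together with Sun's codimension bound on unstable strata to transport these stack computations down to $\rM_X(r,L)$. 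Your proposed induction on $n$ through $\rM_X(r,d-n)$ also runs into the problem that $\gcd(r,d-n)$ need not be $1$, so the intermediate moduli spaces need not be fine or smooth, and no mechanism is given for relating the $\Ext$-groups across the correspondence. In short, the proposal identifies the right criterion to verify but supplies neither a concrete kernel nor a workable mechanism for the cohomology computations, and the stated reason for the hypothesis $r>2n$ does not match its actual role.
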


As we can see below, this result implies that $\rD^{b}(\mathrm{Jac}(X))$ is embedded into $\rD^{b}(\rM_{X}(r, L))$ if $r > 2g$. 

\subsection{Fano visitor problem}

Mirror symmetry predicts that a mirror of a Fano variety is given by a Landau-Ginzburg model, and people have tried to understand Fano varieties via their Landau-Ginzburg mirrors. From this perspective, it is essential to know which categories can appear as semiorthogonal components of the derived categories of Fano varieties since we expect they will also appear as Fukaya-Seidel categories associated with some critical loci of the Landau-Ginzburg mirrors. 

On the other hand, studying semiorthogonal decompositions of derived categories of Fano varieties has played an important role in the theory of derived categories. It has many interesting (sometimes conjectural) consequences to birational geometry, especially rationality, mirror symmetry, moduli spaces of ACM/Ulrich bundles, motives, quantum cohomology, and other geometric properties of Fano varieties. When the derived category of a Fano variety contains the derived category of a projective variety, those two varieties are expected to interchange geometric information. See \cite{KKLL17, KL23} and references therein for more details. Therefore, it is an interesting question which categories can be embedded into the derived categories of Fano varieties.

\begin{definition}\label{def:Fanovisitor}
Let $V$ be a smooth projective variety and $W$ be a smooth projective Fano variety. If there is a fully faithful functor $\rD^{b}(V) \hookrightarrow \rD^{b}(W)$, we say $V$ is a \emph{Fano visitor}, and $W$ is a \emph{Fano host}. The smallest dimension of Fano hosts of $V$ is called the \emph{Fano dimension} of $V$ and denoted by $\Fdim V$. If there is no Fano host, we say $\Fdim V = \infty$. 
\end{definition}

In 2011, Bondal asked the following fundamental question:

\begin{question}[Bondal, \protect{\cite[Question 1.1]{BBF16}}]\label{que:Bondal}
Is every smooth projective variety a Fano visitor?
\end{question}

In other words, he asked if $\Fdim V < \infty$ for every smooth projective variety $V$. It was predicted by Orlov \cite[Conjecture 10]{Orl09} and recently proved by Olander \cite[Theorem 2]{Ola21} that $\rD^{b}(V) \hookrightarrow \rD^{b}(W)$ implies $\dim V \le \dim W$. This implies that $\dim V \le \Fdim V$ and clearly $\dim V = \Fdim V$ if $V$ is a Fano variety. Thus, one can use the \emph{Fano defect} $\Fdim V - \dim V$ to measure how the given variety $V$ is different from the class of Fano varieties. 

There are a few cases which the affirmative answer to the Fano visitor problem is known. Examples of Fano visitors include all curves \cite{Nar17, Nar18, FK18}, all complete intersections \cite{KKLL17}, Hirzebruch surfaces \cite{KL23}, and general Enriques surfaces \cite{Kuz19}. 

Theorem \ref{thm:embedding} immediately implies that for any genus $g \ge 2$ curve $X$ and $n \in \ZZ_{\ge 0}$, its symmetric product $X_{n}$ is a Fano visitor, and $\Fdim X_{n} \le ((2n+1)^{2}-1)(g-1)$. On the other hand, $\rD^{b}(\Jac(X))$ is embedded into $\rD^{b}(X_{g})$ (Section \ref{sec:Fanovisitor}). Thus, we obtain:

\begin{theoremletter}\label{thm:Fanovisitor}
For any nonsingular projective curve $X$ of genus $g \ge 2$, its Jacobian $\Jac(X)$ is a Fano visitor, and $\Fdim \Jac(X) \le \dim \rM_{X}(2g+1, L) = 4(g+1)g(g-1)$. 
\end{theoremletter}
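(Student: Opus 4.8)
The plan is to derive Theorem \ref{thm:Fanovisitor} from Theorem \ref{thm:embedding} by composing two embeddings of derived categories. First, I would apply Theorem \ref{thm:embedding} with $n = g$ and $r = 2g+1$, which satisfies $r > 2n$, to obtain a fully faithful functor $\rD^{b}(X_{g}) \hookrightarrow \rD^{b}(\rM_{X}(2g+1, L))$ for any line bundle $L$ of degree coprime to $2g+1$ (so that the moduli space is a smooth projective Fano variety, as recalled in the introduction). The dimension count is then immediate: $\dim \rM_{X}(2g+1, L) = ((2g+1)^{2} - 1)(g-1) = (4g^{2} + 4g)(g-1) = 4g(g+1)(g-1)$, matching the stated bound. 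Since the composition of fully faithful functors is fully faithful, it then suffices to produce a fully faithful embedding $\rD^{b}(\Jac(X)) \hookrightarrow \rD^{b}(X_{g})$.

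The core geometric input is the Abel--Jacobi map $\alpha_{g} \colon X_{g} \to \Jac(X)$ (after fixing a base point, or working with $\Jac^{g}(X) = \Pic^{g}(X)$), whose fibers are projective spaces $\PP(\rH^{0}(X, \cL))$ parametrizing effective divisors in a given linear equivalence class. For a generic line bundle of degree $g$ the fiber is a single point, so $\alpha_{g}$ is birational; more precisely it is a smooth projective morphism away from the locus of special divisors, and globally it exhibits $X_{g}$ as a variety dominating $\Jac(X)$ with rationally connected (indeed projective-space) fibers. The standard tool is Orlov's theorem on projective bundles, or rather its analogue for such ``relative projective space'' morphisms: one wants that $\rD^{b}(\Jac(X))$ sits inside $\rD^{b}(X_{g})$ as the image of the pullback functor $\alpha_{g}^{*}$ (possibly twisted by a relatively ample line bundle), which is fully faithful precisely when $R(\alpha_{g})_{*}\cO_{X_{g}} = \cO_{\Jac(X)}$. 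The latter is a classical fact: the higher direct images vanish because the fibers are projective spaces (with the jumping loci analyzed via the theory of special divisors and the projection formula), so by the projection formula $\alpha_{g}^{*}$ is fully faithful. I expect the cleanest route is to invoke a known result guaranteeing that if $f\colon Y \to Z$ is a morphism of smooth projective varieties with $Rf_{*}\cO_{Y} = \cO_{Z}$, then $f^{*}$ is fully faithful; this is elementary from adjunction and the projection formula.

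The main obstacle is verifying $R(\alpha_{g})_{*}\cO_{X_{g}} = \cO_{\Jac(X)}$ carefully, since $\alpha_{g}$ is not a projective bundle — its fibers jump in dimension over the Brill--Noether loci $W^{r}_{g} \subset \Pic^{g}(X)$. One handles this by noting that for $g \ge 2$ the locus where the fiber has positive dimension has codimension at least $1$, and on each fiber $\PP^{k}$ one has $\rH^{i}(\PP^{k}, \cO) = 0$ for $i > 0$ and $\rH^{0} = k$; combined with cohomology-and-base-change this forces $R^{i}(\alpha_{g})_{*}\cO_{X_{g}} = 0$ for $i > 0$ and $(\alpha_{g})_{*}\cO_{X_{g}} = \cO_{\Jac(X)}$ since $\Jac(X)$ is normal and the map has connected fibers. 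Alternatively, one can cite the literature: this embedding $\rD^{b}(\Jac(X)) \hookrightarrow \rD^{b}(X_{g})$ is well known (e.g.\ it follows from results on symmetric products and Brill--Noether theory, and appears implicitly in work on the derived categories of symmetric products), and the paper's reference to Section \ref{sec:Fanovisitor} suggests the authors will give the argument there. Finally I would assemble the pieces: compose $\rD^{b}(\Jac(X)) \hookrightarrow \rD^{b}(X_{g}) \hookrightarrow \rD^{b}(\rM_{X}(2g+1, L))$, observe that $\rM_{X}(2g+1, L)$ is Fano of the computed dimension, and conclude $\Fdim \Jac(X) \le 4(g+1)g(g-1)$.
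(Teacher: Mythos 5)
Your proposal is correct and follows essentially the same route as the paper: apply Theorem \ref{thm:embedding} with $n=g$, $r=2g+1$ (so $r>2n$), and precompose with $L\,AJ^{*}$ for the Abel--Jacobi map $AJ: X_{g} \to \Jac(X)$, which is fully faithful because $R\,AJ_{*}\cO_{X_{g}} \cong \cO_{\Jac(X)}$ together with adjunction and the projection formula. The paper obtains that isomorphism immediately from the general fact that $AJ$ is a birational morphism of smooth projective varieties (citing Hironaka), so your finer Brill--Noether/base-change analysis of the jumping fibers is unnecessary (and, as stated, fiberwise vanishing on the non-flat locus would really need the theorem on formal functions rather than cohomology-and-base-change alone).
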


\subsection{Idea of proof}

Let $\cM_{X}(r, L)$ be the moduli stack of rank $r$, determinant $L$ vector bundles on $X$. Let $\cE$ be the universal bundle over $X \times \cM_{X}(r, L)$. Choosing a section $\sigma : \rM_{X}(r, L) \to \cM_{X}(r, L)^{s}$, we have a \emph{Poincar\'e bundle} $\sigma^{*}\cE$ over $X \times \rM_{X}(r, L)$. 

Inspired by earlier works in \cite{LN21, TT21}, define the Fourier-Mukai kernel $\cF$ over $X_{n} \times \rM_{X}(r, L)$ by taking the $S_{n}$-invariant part of the push-forward $q_{*}(\bigotimes_{i} q_{i}^{*}\sigma^{*}\cE)^{S_{n}}$, where $q : X^{n} \times \rM_{X}(r, L) \to X_{n} \times \rM_{X}(r, L)$ is the projection (Section \ref{ssec:kernel}). Then $\cF$ is a rank $r^{n}$ vector bundle over $X_{n} \times \rM_{X}(r, L)$. We consider the Fourier-Mukai transform $\Phi_{\cF} : \rD^{b}(X_{n}) \to \rD^{b}(\rM_{X}(r, L))$ and show that $\Phi_{\cF}$ is fully-faithful.

Applying the Bondal-Orlov criterion (Theorem \ref{thm:BondalOrlov}), the fully-faithfulness can be shown by evaluating cohomology groups of the form $\cF_{\bp} \otimes \cF_{\bq}^{*}$. Using the deformation of vector bundles, we may replace the problem by computation of cohomology of bundles of the form $\bigotimes_{i=1}S_{\lambda_{i}}\cE_{p_{i}}$, where $p_{i}$ are distinct points and $S_{\lambda_{i}}\cE_{p_{i}}$ are Schur functors of the bundle $\cE_{p_{i}}$ associated to partitions $\lambda_{i}$. 

The cohomology groups of these `standard' bundles can be evaluated by employing the Borel-Weil-Bott-Teleman theory (Section \ref{ssec:Teleman}, \cite{Tel98}) once the bundles are over the moduli stack $\cM_{X}(r, \cO)$ of all bundles with trivial determinants. Under the numerical condition $r > 2n$, we identify these cohomologies with that over $\rM_{X}(r, L)$ by studying the contribution of the unstable locus (Section \ref{sec:quantization}, \cite{HL15}) and geometry of moduli spaces of parabolic bundles (Section \ref{ssec:moduliparabolic}). 

\subsection{Some questions}

Here we leave some questions for the interested readers. 

We believe the large rank assumption ($r > 2n$) is not essential, but in our proof, it is required to eliminate the contribution of the unstable locus (Section \ref{ssec:quantizationmoduli}) and realize cohomological boundedness via the deformation argument. Our combinatorial approach does not seem to work for larger $r$ (Section \ref{ssec:cohomologyonstack}). 

\begin{question}\label{que:smallrankcase}
Can we lift the large rank assumption from Theorem \ref{thm:embedding}?
\end{question}

From a motivic computation and earlier results \cite{GL20, LM21, LM23}, it is expected that many copies of $\rD^{b}(X_{n})$ are embedded in $\rD^{b}(\rM_{X}(r, L))$ and these copies are obtained by twisting the image with the pluricanonical divisor. 

\begin{question}\label{que:semiorthogonaldecomposition}
Find an explicit semiorthogonal decomposition of $\rD^{b}(\rM_{X}(r, L))$. 
\end{question}

In our earlier work \cite{LM21, LM23}, the vanishing of cohomology was also used to show that $\cE_{p}$ is an arithmetically Cohen-Macaulay (ACM) bundle on $\rM_{X}(r, L)$ for any $p \in X$. 

\begin{question}\label{que:ACMbundle}
For any $\bp \subset X$ and a collection of partitions $\lambda_{1}, \lambda_{2}, \cdots, \lambda_{k}$, under which condition is the product of Schur functors $\bigotimes_{i=1}^{k}S_{\lambda_{i}}\cE_{p_{i}}$ an ACM bundle? Using this, can we show that $\rM_{X}(r, L)$ has nontrivial families of ACM bundles with arbitrary dimensions? In other words, is $\rM_{X}(r,L)$ of wild representation type \cite{CH11}?
\end{question}

A natural question that arises from Theorem \ref{thm:Fanovisitor} is the following. 

\begin{question}\label{que:abelian}
Is every abelian variety a Fano visitor?
\end{question}

It turns out that a parallel question for motives is true as follows. See Section \ref{sec:Fanovisitor} for details.

\begin{proposition}\label{motivicFanovisitor}
All symmetric products of curves and abelian varieties are motivic Fano visitors.
\end{proposition}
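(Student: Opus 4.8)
The plan is to reduce the motivic statement to the two cases that have already been handled geometrically in the body of the paper, namely symmetric products $X_n$ of curves and Jacobians $\Jac(X)$, together with a structural fact about the Chow motive of an arbitrary abelian variety. Recall that ``motivic Fano visitor'' should mean: the Chow motive $\rM(V)$ (with $\QQ$-coefficients) is a direct summand of the Chow motive $\rM(W)$ of some smooth projective Fano variety $W$; fully faithful Fourier--Mukai functors $\Phi_{\cF}:\rD^b(V)\hookrightarrow\rD^b(W)$ induce, via the kernel $\cF$ viewed as a correspondence, a split injection $\rM(V)\hookrightarrow\rM(W)$ (this is the standard ``derived $\Rightarrow$ motivic'' passage, cf.\ the references in Section \ref{sec:Fanovisitor}). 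So Theorem \ref{thm:embedding} already gives that every $X_n$ is a motivic Fano visitor, with $\rM(X_n)$ a summand of $\rM(\rM_X(r,L))$ for $r>2n$.

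First I would dispose of symmetric products: this is immediate from Theorem \ref{thm:embedding} and the correspondence-from-kernel argument above, so $\rM(X_n)$ splits off $\rM(\rM_X(2n+1,L))$. Next, for an arbitrary abelian variety $A$ over the ground field, I would invoke the classical fact that $A$ is isogenous to a product of simple abelian varieties, and that every abelian variety appears up to isogeny among the isogeny factors of the Jacobian of \emph{some} curve --- more precisely, one can choose a smooth projective curve $C$ together with a surjection $\Jac(C)\twoheadrightarrow A$ (for instance by taking a suitable curve mapping to $A$, or using that $A$ is a quotient of a product of Jacobians and a product of Jacobians is a Jacobian factor). A surjection of abelian varieties, up to isogeny, splits, so with $\QQ$-coefficients $\rM(A)$ is a direct summand of $\rM(\Jac(C))$. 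By the Deninger--Murre / Shermenev decomposition (or just Künneth on $h^1$), $\rM(\Jac(C))\cong\bigoplus_{i=0}^{2g}\wedge^i h^1(C)$, and in particular $\rM(\Jac(C))$ is a summand of $\rM(C^{2g})$; more to the point, the discussion in Section \ref{sec:Fanovisitor} already records that $\rD^b(\Jac(X))$ embeds in $\rD^b(X_g)$, hence $\rM(\Jac(X))$ is a summand of $\rM(X_g)$, which by the previous paragraph is a summand of $\rM(\rM_X(2g+1,L))$. Chaining the splittings $\rM(A)\,|\,\rM(\Jac(C))\,|\,\rM(C_g)\,|\,\rM(\rM_C(2g+1,L))$ and using that a direct summand of a Fano visitor's motivic host-summand is again such a summand, we conclude $A$ is a motivic Fano visitor.

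I would then assemble these: for $V$ either an $X_n$ or an abelian variety, exhibit the explicit Fano host $\rM_X(r,L)$ (with $X=C$ a curve chosen as above when $V=A$, and $r=2n+1$ resp.\ $r=2g+1$), note it is smooth projective Fano by \cite{Ram73} since $(r,d)=1$, and record the resulting bound on dimensions via Olander's theorem \cite{Ola21}. The main obstacle --- really the only nontrivial point beyond bookkeeping --- is making precise and correct the claim that every abelian variety is, up to isogeny, dominated by a Jacobian: one should cite the standard fact (every abelian variety over an algebraically closed field is an isogeny quotient of a Jacobian; over a general field one passes to a finite extension, which is harmless for Chow motives with $\QQ$-coefficients since restriction of scalars or a transfer argument preserves the ``direct summand'' relation) and be careful that the isogeny splitting is only available after tensoring with $\QQ$, which is exactly why the statement is about \emph{motivic} rather than \emph{derived} Fano visitors. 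Everything else is formal: correspondences compose, split injections of motives are stable under the operations used, and the geometric embeddings needed are already proved in Theorem \ref{thm:embedding} and Section \ref{sec:Fanovisitor}.
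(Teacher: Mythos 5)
There is a genuine gap at the very first step, and it propagates through the whole argument. You base the symmetric product case on the claim that a fully faithful Fourier--Mukai functor $\Phi_{\cF}:\rD^{b}(X_{n})\hookrightarrow\rD^{b}(\rM_{X}(r,L))$ induces, ``via the kernel viewed as a correspondence,'' a split injection of rational Chow motives, and you call this a standard passage. It is not: this is precisely Orlov's conjecture \cite{Orl05}, which is open. The kernel $\cF$ and the kernel of its adjoint do give correspondences (via Mukai vectors) whose composition acts as the identity on the total Chow group $CH^{*}(X_{n})_{\QQ}$, but these correspondences are not of pure degree --- the Todd-class twists mix cohomological degrees --- so they do not define an idempotent in the category of Chow motives and do not, by any known argument, exhibit $h(X_{n})$ (or a Tate twist of it) as a direct summand of $h(\rM_{X}(r,L))$. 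This is exactly why the paper does \emph{not} deduce Proposition \ref{motivicFanovisitor} from Theorem \ref{thm:embedding}. Instead it proves Proposition \ref{prop:XnismotivicFanovisitor} by a purely motivic route: del Ba\~no's closed formula for the motivic Poincar\'e polynomial of $\rM_{X}(r,d)$ (Theorem \ref{Bano;inversion}), the decomposition argument from the proof of \cite[Theorem 1.3]{GL20}, and the Fu--Hoskins--Lehalleur criterion (Proposition \ref{prop:equivalenceofeffectivemotives}) that two Kimura finite-dimensional effective Chow motives with equal motivic Poincar\'e polynomials are isomorphic; this identifies $h(X_{n})\otimes\LL^{\otimes n}$ as a direct summand of $h(\rM_{X}(r,L))$ for $r$ large. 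A side benefit of that route, which your argument would lose, is that it works for any smooth curve and any $n$, with no genus or rank restriction inherited from Theorem \ref{thm:embedding}.

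Your treatment of abelian varieties is essentially the paper's: the paper chains Matsusaka's theorem \cite{Mat52} (every abelian variety is a surjective image of some $\Jac(C)$) with Vial's theorem \cite[Theorem 1.4]{Via13} (a surjection of smooth projective varieties splits off the motive of the target) applied both to $\Jac(C)\twoheadrightarrow A$ and to the Albanese map $C_{g}\to\Jac(C)$; your variant via Poincar\'e reducibility and the splitting of isogenies with $\QQ$-coefficients is an acceptable substitute for the first application of Vial, and your caution that this only works rationally is well placed. But since this part rests on $C_{g}$ being a motivic Fano visitor, the unproved derived-to-motivic passage leaves the entire proposal without a valid foundation. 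To repair it you must replace that step with an independent motivic argument along the lines of Proposition \ref{prop:XnismotivicFanovisitor}.
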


\subsection{Structure of the paper}

Section \ref{sec:parabolic} reviews the moduli space/stack of parabolic bundles, functorial morphisms between them, Schur functors of the universal bundle, and the GIT construction. All results are classical. Section \ref{sec:BondalOrlov} defines the Fourier-Mukai kernel F. Section \ref{sec:quantization} explains the negligibility of the contribution of unstable loci. In Section \ref{sec:boundedness}, employing the Borel-Weil-Bott-Teleman theory, we prove the boundedness and triviality of certain vector bundles, which is a necessary condition in the Bondal-Orlov criterion. Section \ref{sec:simple} shows the simpleness of the restricted Fourier-Mukai kernel and completes the proof of Theorem A. In the last section (Section \ref{sec:Fanovisitor}), we prove Theorem B and discuss the Fano visitor problem for motives.

\subsection*{Convention}

We work over $\CC$. We use $X$ to denote a smooth projective curve of genus $g \geq 2$ and $L$ is a line bundle of degree $d$ on $X$. 

\acknowledgement 

Part of this work was done when the first author was working at the Institute of the Mathematical Sciences of the Americas, University of Miami as an IMSA, Research Assistant Professor. He thanks Ludmil Katzarkov and Simons Foundation for partially supporting this work via Simons Investigator Award-HMS. He also thanks to Claire Voisin for helpful discussions and telling him about her work on Fano visitor problem for motives.

\section{Moduli space of parabolic bundles}\label{sec:parabolic}

In this section, we give an overview of the moduli space of parabolic bundles. 

\subsection{Moduli spaces of parabolic bundles}\label{ssec:moduliparabolic}

Let $X$ be a smooth projective curve of genus $g$ and let $\bp = (p_{1}, \cdots, p_{k})$ be an ordered set of distinct closed points on $X$. For notational simplicity, we only discuss parabolic bundles with full flags.

\begin{definition}\label{def:parabolicbundle}
A \emph{parabolic bundle} over $(X, \bp)$ is a collection of data $(E, \{W_{\bullet}^{i}\})$ where
\begin{enumerate}
\item $E$ is a rank $r$ vector bundle over $X$;
\item For each $1 \le i \le k$, $W_{\bullet}^{i} \in \Fl(E|_{p_{i}})$, in other words, $W_{\bullet}^{i}$ is a strictly increasing sequence of subspaces of $E|{p_i}$ as follows.
\[
	0 \subsetneq W_{1}^{i} \subsetneq W_{2}^{i} \subsetneq \cdots \subsetneq W_{r-1}^{i} \subsetneq E|_{p_{i}}
\]
\end{enumerate}
\end{definition}

\begin{definition}\label{def:parabolicmoduli}
Let $\cM_{X, \bp}(r, L)$ (resp. $\cM_{X, \bp}(r, d)$) be the moduli stack of rank $r$, determinant $L$ (resp. degree $d$) parabolic bundles over $(X, \bp)$.
\end{definition}

When $k = 0$, so there is no parabolic point, then $\cM_{X, \bp}(r, L)$ is the  moduli stack $\cM_{X}(r, L)$ of rank $r$ vector bundles. Let $\cE$ be the universal bundle over $X \times \cM_{X}(r, L)$. There is a forgetful morphism $\pi : \cM_{X, \bp}(r, L) \to \cM_{X}(r, L)$ and for each point $[E] \in \cM_{X}(r, L)$, its fiber $\pi^{-1}([E])$ is a product of flag varieties $\prod_{i=1}^{k}\Fl(E|_{p_{i}})$. Thus, 
\[
	\cM_{X, \bp}(r, L) = \times_{\cM_{X}(r, L)}\Fl(\cE|_{p_{i}}).
\]

To obtain a separated moduli stack with projective moduli space, one can employ a stability condition. For the moduli space of vector bundles, there is a standard notion of slope stability, but for parabolic bundles, the stability condition depends on numerical data, and they form a family. 

\begin{definition}\label{def:parabolicweight}
A \emph{parabolic weight} $\ba$ is a collection of data $\ba = (a_{\bullet}^{1}, \cdots, a_{\bullet}^{k})$ where each $a_{\bullet}^{i}$ is a length $r$ strictly decreasing sequence of real numbers 
\[
	1 > a_{1}^{i} > a_{2}^{i} > \cdots > a_{r-1}^{i} > a_{r}^{i} \ge 0.
\]
If $a_{r}^{i} = 0$ for all $i$, we say $\ba$ is \emph{normalized}.
\end{definition}

For a pointed curve $(X, \bp)$ with $|\bp| = k$, the space of normalized parabolic weights is the interior of $\Delta_{r-1}^{k}$, where $\Delta_{r-1}$ is an $(r-1)$-dimensional simplex. Indeed, $\Delta_{r-1} = \{(x_{i}) \in \RR^{r}\;|\; \sum_{i=1}^{r} x_{i} = 1, x_{i} \ge 0\}$ and $\mathrm{int}\;\Delta_{r-1}$ is identified with the set of parabolic weights on a single point, after setting $a_{0}^{i} = 1$, via $a_{\bullet}^{i} \mapsto (a_{j}^{i} - a_{j+1}^{i})_{0 \le j \le r-1} \in \Delta_{r-1}$. 

\begin{definition}\label{def:slope}
Let $(E, \{W_{\bullet}^{i}\})$ be a parabolic bundle. The \emph{parabolic degree} of $(E, \{W_{\bullet}^{i}\})$ with respect to $\ba$ is 
\[
	\pdeg_{\ba} (E, \{W_{\bullet}^{i}\}) := \deg E + \sum_{i=1}^{k}\sum_{j=1}^{r}a_{j}^{i}.
\]
Its \emph{parabolic slope} is 
\[
	\mu_{\ba} (E, \{W_{\bullet}^{i}\}) = \frac{\pdeg_{\ba} (E, \{W_{\bullet}^{i}\})}{\mathrm{rank}\; E}.
\]
\end{definition}

Fix a parabolic bundle $(E, \{W_{\bullet}^{i}\})$ over $(X, \bp)$. Let $F \subset E$ be a subbundle. For each point $p_{i}$, consider a (non-strictly increasing) filtration
\[
	W_{1}^{i} \cap F|_{p_{i}} \subset W_{2}^{i} \cap F|_{p_{i}} \subset \cdots \subset W_{r-1}^{i} \cap F|_{p_{i}}
\]
of $F|_{p_{i}}$. We define a full flag $W|_{F \bullet}^{i}$ of $F|_{p_{i}}$, by taking $(W|_{F}^{i})_{j}$ as $W_{\ell}^{i} \cap F_{p_{i}}$ with the smallest index $\ell$ such that $\dim (W_{\ell}^{i} \cap F_{p_{i}}) = j$. We also define the induced parabolic weight $\bfb = (\bfb_{\bullet}^{k})$ as $\bfb_{j}^{i} = \ba_{\ell}^{i}$, hence a (non-normalized) \emph{parabolic subbundle} $(F, \{W|_{F \bullet}^{i}\})$. By taking the quotient bundle $Q = E/F$ and the quotient filtration $\im (W_{j}^{i} \to Q|_{p_{i}})$, one can define a \emph{quotient parabolic bundle} $(Q, \{(W/F)_{\bullet}^{i}\})$ in a similar way. We define the quotient parabolic weight $\bc$ on $(Q, \{(W/F)_{\bullet}^{i}\})$ by taking the complementary weight data of $\bfb$. 

\begin{definition}\label{def:projectiveparabolicmoduli}
Fix a parabolic weight $\ba$. We say a parabolic bundle $(E, \{W_{\bullet}^{i}\})$ is \emph{$\ba$-(semi)-stable} if for any parabolic subbundle $(F, \{W|_{F \bullet}^{i}\})$ with induced parabolic weight $\bfb$, 
\[
	\mu_{\bfb}(F, \{W|_{F \bullet}^{i}\}) (\le) < \mu_{\ba}(E, \{W_{\bullet}^{i}\}). 
\]
Let $\cM_{X, \bp}(r, L, \ba) \subset \cM_{X, \bp}(r, L)$ be the substack of $\ba$-semistable parabolic bundles. There is a good moduli space $p : \cM_{X, \bp}(r, L, \ba) \to \rM_{X, \bp}(r, L, \ba)$, which is a normal projective variety. 
\end{definition}

If $\ba$ is general, then the stability and the semistability coincide. Then $\rM_{X, \bp}(r, L, \ba)$ is a smooth projective variety  \cite{MS80}. When $k = 0$, we denote the moduli stack of stable (resp. semistable) bundles by $\cM_{X}(r, L)^{s}$ (resp. $\cM_{X}(r, L)^{ss}$). 

\subsection{Functorial morphisms}\label{ssec:functorial}

Because of the connection between type A conformal blocks and the moduli stack of (untwisted) principal parabolic $\SL_{r}$-bundles \cite{BL94, Pau96, MY20, MY21}, the case of $L = \cO$ has been spelled out most explicitly in literature. For a general $L \in \Pic^{d}(X)$, we may describe $\rM_{X, \bp}(r, L, \ba)$ as a contraction of $\rM_{X, \bp'}(r, \cO, \ba')$ for some $\bp'$ and $\ba'$. In this section, we describe functorial morphisms between moduli stacks and moduli spaces.

Let $\bp := (p_{1}, \cdots, p_{k})$ and $\bp' := \bp \sqcup \{p_{k+1}\}$. Fix $d$ such that $1 \le d \le r-1$. For a parabolic bundle $(E, \{W_{\bullet}^{i}\}_{1 \le i \le k+1}) \in \cM_{X, \bp'}(r, \cO)$, consider the following epimorphism
\begin{equation}\label{eqn:fiberwisemodification}
	E \to E|_{p_{k+1}} \to E_{p_{k+1}}/W_{d}^{k+1} \to 0.
\end{equation}
Let $E_{d-r}$ be the kernel. Then $E_{d-r}$ is a vector bundle of the determinant $\cO(-(r-d)p_{k+1}) = \cO((d-r)p_{k+1})$. Forgetting all flags on $p_{k+1}$, we have an induced parabolic bundle $(E, \{W_{\bullet}^{i}\}_{1 \le i \le k})$ over $(X, \bp)$. The map $(E, \{W_{\bullet}^{i}\}_{1 \le i \le k+1}) \mapsto (E_{d-r}, \{W_{\bullet}^{i}\}_{1 \le i \le k})$ induces a morphism of stacks
\[
	m_{d} : \cM_{X, \bp'}(r, \cO) \to \cM_{X, \bp}(r, \cO((d-r)p_{k+1})).
\]

By selecting a stability appropriately, we may induce a morphism between their good moduli spaces. Let $\ba \in \Delta_{r-1}^{k}$ be a general parabolic weight. For the $(k+1)$-st point, we define $a_{\bullet}^{k+1}$ as $a_{j}^{k+1} < \epsilon$ for $j > d$ and $a_{j}^{k+1} > 1 - \epsilon$ for $j \le d$, for sufficiently small $\epsilon > 0$. We set $\ba' := \ba \cup a_{\bullet}^{k+1}$. By comparing the stabilities (Consult the proof of \cite[Proposition 2.9]{LM23}), we have an induced morphism of stacks
\[
	m_{d} : \cM_{X, \bp'}(r, \cO, \ba') \to \cM_{X, \bp}(r, \cO((d-r)p_{k+1}), \ba)
\]
and the corresponding morphism between their good moduli spaces (we use the same notation, if there is no chance of confusion)
\begin{equation}\label{eqn:modification}
	m_{d} : \rM_{X, \bp'}(r, \cO, \ba') \to \rM_{X, \bp}(r, \cO((d-r)p_{k+1}), \ba).
\end{equation}

On the other hand, by tensoring an appropriate line bundle $A$ on $E$, we obtain an isomorphism 
\begin{equation}\label{eqn:twistingiso}
\begin{split}
	\cM_{X, \bp}(r, L, \ba) &\cong \cM_{X, \bp}(r, L \otimes A^{r}, \ba),\\
	(E, \{W_{\bullet}^{i}\}) & \mapsto (E \otimes A, \{W_{\bullet}^{i}\}).
\end{split}
\end{equation}
Therefore, if $\deg (L_{1} \otimes L_{2}^{-1})$ is a multiple of $r$, $\cM_{X, \bp}(r, L_{1}) \cong \cM_{X, \bp}(r, L_{2})$ and there are similar isomorphisms between moduli stacks of $\ba$-semistable bundles and their good moduli spaces. By composing \eqref{eqn:twistingiso} and \eqref{eqn:modification}, if $\deg L = d$, we obtain a morphism $\rM_{X, \bp}(r, \cO, \ba') \to \rM_{X, \bp}(r, L, \ba)$, induced by $m_{d}$. 

Assume further that $(r, d) = 1$ and $\ba$ is sufficiently small, in the sense that $\sum_{i=1}^{k}\sum_{j=1}^{r-1}a_{j}^{i} < \epsilon$. Then the forgetful morphism $\pi : \cM_{X, \bp}(r, L) \to \cM_{X}(r, L)$ induces
\[
\begin{split}
	\pi : \cM_{X, \bp}(r, L, \ba) &\to \cM_{X}(r, L)^{s}\\
	(E, \{W_{\bullet}^{i}\}) &\mapsto E,
\end{split}
\]
and the fiber is a product of flag varieties, because the flag structure does not affect to the stability computation. 

In summary, if $\ba$ is sufficiently small, we have the following commutative diagram
\begin{equation}\label{eqn:commutativediagram}
\xymatrix{&\cM_{X, \bp'}(r, \cO) \ar[r]^{m_{d}} \ar@/_4.0pc/[ddd]_{\pi}& \cM_{X, \bp}(r, L)\ar@/^4.0pc/[ddd]^{\pi}\\
\rM_{X, \bp'}(r, \cO, \ba') \ar@{-->}[d]^{\pi}&\cM_{X, \bp'}(r, \cO, \ba') \ar[l]_{p}\ar[u]_{j} \ar[r]^{m_{d}} \ar@{-->}[d]^{\pi}& \cM_{X, \bp}(r, L, \ba) \ar[u]_{j} \ar[d]^{\pi} \ar[r]^{p} & \rM_{X, \bp}(r, L, \ba) \ar[d]^{\pi}\\
\rM_{X}(r, \cO) &\cM_{X}(r, \cO)^{ss} \ar[l]_{p} \ar[d]^{j} & \cM_{X}(r, L)^{s} \ar[r]^{p} \ar[d]^{j}& \rM_{X}(r, L)\\
& \cM_{X}(r, \cO) & \cM_{X}(r, L).
}
\end{equation}
Each $\pi$ is a forgetful map, $p$ is a good moduli map, and $j$ is a natural inclusion. Note that some $\pi$ on the left-hand side are not regular morphisms but are defined over an open substack. 

\subsection{Schur functors and Borel-Weil-Bott theorem}\label{ssec:Schur}

Let $F$ be a rank $r$ vector bundle on a stack $\cM$. For a partition $\lambda = (\lambda_{1} \ge \lambda_{2} \ge \cdots \ge \lambda_{k} > 0)$ of $n$ with at most $r$ parts, we denote the associate Schur functor bundle by $S_{\lambda}F$. For instance, if $\lambda = (n)$, $S_{\lambda}F = \mathrm{Sym}^{n}F$. If $\lambda = (1, 1, \cdots, 1)$, $S_{\lambda}F = \wedge^{n}F$. 

Equivalently, following the standard representation theory of $\mathfrak{sl}_{r}$, any partition $\lambda$ can be understood as a sum of dominant integral weights $\lambda = \sum a_{j}\omega_{j}$. Here $\omega_{j}$ is the $j$-th fundamental weight. Then $a_{j} = \lambda_{j} - \lambda_{j+1}$.

Let $\cE$ be the universal bundle over $\cM_{X}(r, L)$. Note that there is a forgetful $\cM_{X}(r, L)$-morphism $\cM_{X, \bp}(r, L) \to \Fl(\cE|_{p_{i}})$. For each partition $\lambda$ with at most $r$ parts, there is an associated line bundle $L_{p_{i}, \lambda}$ over $\Fl(\cE|_{p_{i}})$. By taking the pull-back, we have a line bundle $L_{p_{i}, \lambda}$ over $\cM_{X, \bp}(r, L)$. By the Borel-Weil-Bott theorem, we have 
\begin{equation}\label{eqn:pushforwardformula}
	\pi_{*}L_{p_{i}, \lambda} = S_{\lambda}\cE_{p_{i}}.
\end{equation}
Furthermore, using the Leray spectral sequence, one can identify their cohomology groups:
\begin{equation}\label{eqn:cohomologycomparison}
	\rH^{*}(\cM_{X, \bp}(r, L), L_{p_{i}, \lambda}) \cong \rH^{*}(\cM_{X}(r, L), S_{\lambda}\cE_{p_{i}}).
\end{equation}
When $\ba$ is small, since the restricted map $\pi : \cM_{X, \bp}(r, L, \ba) \to \cM_{X}(r, L)^{s}$ is also a fibration with the same fiber because $\cM_{X, \bp}(r, L, \ba) = \cM_{X}(r, L)^{s} \times_{\cM_{X}(r, L)}\cM_{X, \bp}(r, L)$. So the same cohomology formula is true:
\begin{equation}\label{eqn:cohomologycomparison2}
	\rH^{*}(\cM_{X, \bp}(r, L, \ba), L_{p_{i}, \lambda}) \cong \rH^{*}(\cM_{X}(r, L)^{s}, S_{\lambda}\cE_{p_{i}}).
\end{equation}

On the other hand, note that the morphism $m_{d} : \cM_{X, \bp'}(r, \cO) \to \cM_{X, \bp}(r, L)$ in \eqref{eqn:fiberwisemodification} does not make any change along $p_{i}$ for $1 \le i \le k$. Thus, we have 
\begin{equation}\label{eqn:pullbackoflambda}
	m_{d}^{*}L_{p_{i}, \lambda} = L_{p_{i}, \lambda}. 
\end{equation}
From now on, for notational simplicity, we will suppress the pull-back $m_{d}^{*}$ if there is no chance of confusion.

\subsection{GIT construction}\label{ssec:GITconstruction}

The moduli space $\rM_{X, \bp}(r, L, \ba)$ of semistable parabolic bundles can also be constructed by GIT. In this section, we briefly describe a construction. 

Fix a degree one line bundle $\cO_{X}(1)$ over $X$. Take a sufficiently large $m \in \ZZ$ so that $\rH^{1}(E(m)) = 0$ and $E(m)$ is globally generated for all $(E, \{W_{\bullet}^{i}\}) \in \cM_{X, \bp}(r, L, \ba)$. Let $\chi_{m} := \rH^{0}(E(m)) = d + r(m + 1-g)$. Let $\bQ(m) = \mathrm{Quot}(\cO_{X}^{\chi_{m}})$ be the quot scheme parametrizing quotients $\cO_{X}^{\chi_{m}} \to F \to 0$ such that the Hilbert polynomial of $F$ is that of $E(m)$. Let $\bR(m) \subset \bQ(m)$ be the locally closed subscheme parametrizing the quotients $\cO_{X}^{\chi_{m}} \stackrel{\varphi}{\to} F \to 0$ such that $\rH^{1}(F) = 0$, $\rH^{0}(\cO_{X}^{\chi_{m}}) \stackrel{\wedge^{r}\varphi}{\cong} \rH^{0}(F) \cong \CC^{\chi_{m}}$, $F$ is locally free, and $\wedge^{r}F \cong L(rm)$. 

For the universal quotient $\cO_{X \times \bR(m)}^{\chi_{m}} \to \cF \to 0$, let 
\[
	\widetilde{\bR}(m) := \times_{\bR(m)}\Fl(\cF|_{p_{i}})
\]
be the fiber product of full-flag bundles. Then $\widetilde{\bR}(m)$ admits an $\SL_{\chi_{m}}$-action, and $\rM_{X, \bp}(r, L, \ba)$ is constructed as a GIT quotient $\widetilde{\bR}(m)\git \SL_{\chi_{m}}$ with a certain linearization. 

The linearization is constructed explicitly in \cite{Bho89}. Let $Z := \PP \rHom(\wedge^{r}\CC^{\chi_{m}}, \rH^{0}(L(rm)))^{*}$. Then for any $[\cO_{X}^{\chi_{m}} \stackrel{\varphi}{\to} F \to 0] \in \bR(m)$, we assign 
\[
	\wedge^{r}\CC^{\chi_{m}} \stackrel{\wedge^{r}\varphi}{\to}\wedge^{r}\rH^{0}(F) \to \rH^{0}(\wedge^{r}F) \cong \rH^{0}(L(rm)).
\]
This assignment induces a morphism $\bR(m) \to Z$, which is indeed an embedding \cite[Section 7]{Tha96}. Furthermore, for each $p_{i} \in \bp$, we have an evaluation map $\psi_{i} : \CC^{\chi_{m}} \cong \rH^{0}(F) \to F|_{p_{i}}$. Then for each $W_{j}^{i}$, we have $\psi_{i}^{-1}(W_{j}^{i}) \in \Gr(\chi_{m} - r + j, \CC^{\chi_{m}})$. Therefore, we have a morphism 
\begin{equation}\label{eqn:GITembedding}
	\widetilde{\bR}(m) \to Z \times \prod_{i=1}^{k}\prod_{j=1}^{r-1}\Gr(\chi_{m}-r+j, \CC^{\chi_{m}}).
\end{equation}
In \cite{Bho89}, Bhosle described an explicit ample line bundle $A(\ba)$ on the right-hand side, such that $\widetilde{\bR}(m) \git_{A(\ba)} \SL_{\chi_{m}} \cong \rM_{X, \bp}(r, L, \ba)$. On the atlas $\widetilde{\bR}(m) \to [\widetilde{\bR}(m)/\SL_{\chi_{m}}]$, note that the line bundle $L_{p_{i}, \omega_{j}}$ is given by the restriction of $\cO_{\Gr(\chi_{m} - r + j, \CC^{\chi_{m}})}(1)$ for the $i$-th factor in \eqref{eqn:GITembedding}.

A parabolic bundle $(E, \{W_{\bullet}^{i}\})$ is $\ba$-unstable if it has a maximal destabilizing parabolic subbundle $(F, \{W|_{F \bullet^{i}}\})$ such that $\mu_{\bfb}(F, \{W|_{F \bullet}^{i}\}) > \mu_{\ba}(E, \{W_{\bullet}^{i}\})$. If we set $s = \mathrm{rank}\; F$, $e = \deg F$, and $J^{i} \subset [r]$ such that 
\[
	\mu_{\bfb}(F, \{W|_{F \bullet}^{i}\}) = \frac{e + \sum_{i=1}^{k}\sum_{j \in J^{i}}a_{j}^{i}}{s},
\]
this irreducible component is indexed by the numerical triple $(s, e, \{J^{i}\})$. 

\begin{definition}\label{def:unstablestratum}
We denote the unstable stratum associated to the numerical data $(s, e, \{J^{i}\})$ by $S_{(s, e, \{J^{i}\})} \subset \widetilde{\bR}(m)$.
\end{definition}

Under the condition $L = \cO$, the codimension of the unstable locus is evaluated by Sun \cite{Sun00}. See \cite[Section 3.2]{MY20} for a summary. In particular, in \cite[p.254, (3.2)]{MY20}, it was shown that codimension of $S_{(s, e, \{J^{i}\})}$ is given by 
\[
	s(r-s)(g-1) + \sum_{i=1}^{k}\mathrm{codim}\; Y^{i} + re, 
\]
where $Y^{i} \subset \Fl(\CC^{r})$ is a certain flag variety. In \cite[Lemma 5.2]{Sun00} (see also \cite[p.254]{MY20}), it was shown that $\sum_{i=1}^{k}\mathrm{codim}\; Y^{i} + re$ is positive. In summary, we have:

\begin{lemma}\label{lem:unstablecodim}
The codimension of the unstable locus $S_{(s, e, \{J^{i}\})}$ is at least $s(r-s)(g-1) + 1$. 
\end{lemma}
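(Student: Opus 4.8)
The plan is to unpack the definitions and invoke the two cited computations verbatim, so the whole argument is short. Recall from Section \ref{ssec:GITconstruction} that the unstable stratum $S_{(s, e, \{J^{i}\})}$ is indexed by the rank $s = \mathrm{rank}\; F$ and degree $e = \deg F$ of a maximal destabilizing parabolic subbundle $(F, \{W|_{F\bullet}^{i}\})$ together with the subsets $J^{i} \subset [r]$ recording which weights $a_{j}^{i}$ contribute to $\mu_{\bfb}$. By Sun's theorem \cite[Lemma 5.2]{Sun00}, as summarized in \cite[p.254, (3.2)]{MY20}, the codimension of $S_{(s, e, \{J^{i}\})}$ in $\widetilde{\bR}(m)$ equals
\[
	s(r-s)(g-1) + \sum_{i=1}^{k}\mathrm{codim}\; Y^{i} + re,
\]
where $Y^{i} \subset \Fl(\CC^{r})$ is the flag variety associated with the combinatorial data $J^{i}$.

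Next I would isolate the term $\sum_{i=1}^{k}\mathrm{codim}\; Y^{i} + re$ and cite the positivity statement: by \cite[Lemma 5.2]{Sun00} (see also \cite[p.254]{MY20}) this quantity is strictly positive. Since it is an integer — $\mathrm{codim}\; Y^{i}$ is the codimension of a projective variety and hence a nonnegative integer, and $re \in \ZZ$ — strict positivity forces
\[
	\sum_{i=1}^{k}\mathrm{codim}\; Y^{i} + re \ge 1.
\]
Substituting this lower bound into Sun's codimension formula yields
\[
	\codim S_{(s, e, \{J^{i}\})} \;=\; s(r-s)(g-1) + \sum_{i=1}^{k}\mathrm{codim}\; Y^{i} + re \;\ge\; s(r-s)(g-1) + 1,
\]
which is exactly the claim. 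Since the unstable locus is the union of these strata, its codimension is the minimum over all admissible $(s, e, \{J^{i}\})$ of the above, and is therefore also at least $s(r-s)(g-1)+1$ for whichever stratum achieves the minimum.

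There is essentially no obstacle here: the proof is a bookkeeping consequence of the two inputs from \cite{Sun00, MY20}, the only mild point being the passage from "positive" to "$\ge 1$", which is justified by integrality of the relevant quantities. The one thing worth double-checking — but which I would treat as already established by the cited references — is that the codimension formula of \cite{Sun00} is stated for the $L = \cO$ case, which is precisely the setting in which Lemma \ref{lem:unstablecodim} will be applied later (via the commutative diagram \eqref{eqn:commutativediagram}, everything is pulled back from $\cM_{X, \bp'}(r, \cO, \ba')$). So no genericity or large-rank hypothesis on $\ba$ or $r$ is needed for this lemma itself.
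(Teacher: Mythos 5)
Your proposal is correct and follows essentially the same route as the paper: the lemma is stated there as an immediate summary of the codimension formula from \cite[p.254, (3.2)]{MY20} together with the positivity of $\sum_{i=1}^{k}\mathrm{codim}\; Y^{i} + re$ from \cite[Lemma 5.2]{Sun00}, exactly the two inputs you combine. Your explicit remarks on integrality (passing from ``positive'' to ``$\ge 1$'') and on the $L = \cO$ setting are both consistent with the paper's usage.
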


%%%%%%%%%%%%%%%%%%%%%%%%%%%%%%%%%%%%%

\section{Bondal-Orlov criterion and cohomology of line bundles}\label{sec:BondalOrlov}

Many natural functors between two derived categories of algebraic varieties are constructed as Fourier-Mukai transforms $\Phi_{\cF}$. In this section, we describe the Fourier-Mukai kernel that we will use and reformulate the Bondal-Orlov criterion for the fully-faithfulness of $\Phi_{\cF}$.

\subsection{Fourier-Mukai kernel}\label{ssec:kernel}

Let $X^{n}$ be the product of $n$ copies of $X$ and let $q_{i} : X^{n} \to X$ be the projection to its $i$-th factor. There is a natural $S_{n}$-action on $X^{n}$. We denote the quotient map $q : X^{n} \to X_{n} = X^{n}/S_{n} \cong \mathrm{Hilb}^{n}(X)$, which is a finite flat morphism. This construction can be relativized, obviously. For any scheme or stack $\cM$, we have a quotient map $q_{\cM} : X^{n} \times \cM \to X_{n} \times \cM$. If there is no chance of confusion, we will suppress the subscript $\cM$ and denote it by $q$.

Let $\cE$ be the universal bundle on the moduli stack $\cM_{X}(r, L)$. Let $\cM_{X}(r, L)^{ss} \subset \cM_{X}(r, L)$ be the open substack of the semistable bundles and $p : \cM_{X}(r, L)^{ss} \to \rM_{X}(r, L)$ be the good moduli space morphism. 

Suppose that $(r, d) = 1$, where $d = \deg L$. Then there is a section $\sigma : \rM_{X}(r, L) \to \cM_{X}(r, L)^{ss} = \cM_{X}(r, L)^{s}$. A \emph{Poincar\'e bundle} on the coarse moduli space is $\sigma^{*}\cE$, the pull-back of the universal bundle over the moduli stack. Note that it depends on the choice of a section $\sigma$, which is equivalent to a choice of a line bundle on $\rM_{X}(r, L)$. 

\begin{definition}\label{def:FMkernel}
Consider a Poincar\'e bundle $\sigma^{*}\cE$ over $X \times \rM_{X}(r, L)$. We set 
\[
	(\sigma^{*}\cE)^{\otimes n} := \bigotimes_{i=1}^{n}q_{i}^{*}\sigma^{*}\cE.
\]
Then $(\sigma^{*}\cE)^{\otimes n}$ is an $S_{n}$-equivariant bundle of rank $r^{n}$. By taking the invariant functor, we obtain a vector bundle 
\begin{equation}\label{eqn:kernelF}
	\cF := (q_{*}(\sigma^{*}\cE)^{\otimes n})^{S_{n}}
\end{equation}
on $X_{n} \times \rM_{X}(r, L)$ \cite[Lemma 2.1]{TT21}.
\end{definition}

\begin{definition}\label{def:restrictedkernel}
For $\bp  = (\sum p_{j})\in X_{n}$, let $\cF_{\bp} = \iota_{\bp}^{*}\cF$ be the restriction of $\cF$ by $\iota_{\bp} : \{\bp\} \times \rM_{X}(r, L) \hookrightarrow X_{n}\times \rM_{X}(r, L)$. 
\end{definition}

\begin{lemma}[\protect{\cite[Corollary 2.8]{TT21}}]\label{lem:FMkernel} The restricted bundle $\cF_{\bp}$ is a deformation of $\bigotimes_{i}q_{i}^{*}\sigma^{*}\cE_{p_{i}}$. In other words, there is a family of bundles over $\AA^{1} \times \rM_{X}(r, L)$ such that the restriction to $\{0\} \times \rM_{X}(r, L)$ is isomorphic to $\bigotimes_{i}q_{i}^{*}\sigma^{*}\cE_{p_{i}}$ and that to $\{t\} \times \rM_{X}(r, L)$ for $t \ne 0$ is isomorphic to $\cF_{\bp}$.
\end{lemma}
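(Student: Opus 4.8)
The plan is to exhibit $\cF_{\bp}$ and $\bigotimes_{i}q_{i}^{*}\sigma^{*}\cE_{p_{i}}$ as the generic and special fibres of a Rees-type degeneration attached to a canonical filtration of $\cF_{\bp}$. Write $\cG := (\sigma^{*}\cE)^{\otimes n}$, so $\cF = (q_{*}\cG)^{S_{n}}$ with $q : X^{n} \times \rM_{X}(r, L) \to X_{n} \times \rM_{X}(r, L)$ finite, flat and $S_{n}$-equivariant. The first step is a base-change reduction: since $q$ is finite and flat, $q_{*}\cG$ is a vector bundle whose formation commutes with restriction along $\iota_{\bp}$, and since $\mathrm{char}\,\CC = 0$ the functor $(-)^{S_{n}}$ is exact and commutes with pullback along $\iota_{\bp}$, so
\[
	\cF_{\bp} \;\cong\; \big(q'_{*}\,\cG|_{Z_{\bp} \times \rM_{X}(r, L)}\big)^{S_{n}},
\]
where $Z_{\bp} := q^{-1}(\bp) \subset X^{n}$ is the scheme-theoretic fibre (a finite $S_{n}$-invariant subscheme of length $n!$) and $q'$ denotes its projection to $\rM_{X}(r, L)$. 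When $p_{1}, \dots, p_{n}$ are pairwise distinct, $Z_{\bp}$ is a single free $S_{n}$-orbit of reduced points and this formula already yields $\cF_{\bp} \cong \bigotimes_{i}q_{i}^{*}\sigma^{*}\cE_{p_{i}}$ on the nose, so the constant family suffices.

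In general write $\bp = \sum_{j=1}^{s}m_{j}q_{j}$ with the $q_{j}$ distinct. Since $\cG$ is locally free, $\cG|_{Z_{\bp} \times \rM_{X}(r, L)}$ is a locally free $\cO_{Z_{\bp}}$-module, so filtering by powers of the (nilpotent, $S_{n}$-stable) nilradical $\mathfrak{m} \subset \cO_{Z_{\bp}}$ gives an $S_{n}$-equivariant filtration whose associated graded is $(\mathrm{gr}_{\mathfrak{m}}\cO_{Z_{\bp}}) \otimes \cG|_{Z_{\bp}^{\mathrm{red}} \times \rM_{X}(r, L)}$. Applying $q'_{*}$ and $(-)^{S_{n}}$, both exact here, produces a filtration of $\cF_{\bp}$ by subbundles with locally free quotients, and the crux of the argument is to identify its associated graded with $\bigotimes_{i}q_{i}^{*}\sigma^{*}\cE_{p_{i}}$. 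For this I would argue: $Z_{\bp}^{\mathrm{red}}$ is one $S_{n}$-orbit with stabiliser the Young subgroup $S_{\mathbf{m}} = \prod_{j}S_{m_{j}}$, so Frobenius reciprocity computes the invariants at a single point; there $Z_{\bp}$ is \'etale-locally $\prod_{j}W_{m_{j}}$ with $W_{m}$ the fully ramified fibre of $X^{m} \to X_{m}$, so $\mathrm{gr}_{\mathfrak{m}}\cO_{Z_{\bp}}$ contributes $\bigotimes_{j}R_{m_{j}}$ with $R_{m} = \CC[x_{1}, \dots, x_{m}]/(e_{1}, \dots, e_{m})$ the coinvariant algebra of $S_{m}$, i.e.\ its regular representation. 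The total contribution thus factors as $\bigotimes_{j}(\cE_{q_{j}}^{\otimes m_{j}} \otimes R_{m_{j}})^{S_{m_{j}}}$, and Schur--Weyl duality gives $(\cE^{\otimes m} \otimes R_{m})^{S_{m}} \cong \bigoplus_{\lambda}(S_{\lambda}\cE)^{\oplus \dim V_{\lambda}} \cong \cE^{\otimes m}$ as vector bundles (here $V_{\lambda}$ is the irreducible $S_{m}$-representation and $\lambda$ runs over partitions of $m$ with at most $r$ rows). Hence $\mathrm{gr}\,\cF_{\bp} \cong \bigotimes_{j}\cE_{q_{j}}^{\otimes m_{j}} = \bigotimes_{i}q_{i}^{*}\sigma^{*}\cE_{p_{i}}$.

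Finally, from a filtration $0 = \cF^{0} \subset \cF^{1} \subset \cdots \subset \cF^{N} = \cF_{\bp}$ by subbundles with locally free quotients on $\rM_{X}(r, L)$, the Rees construction (deformation to the associated graded) produces a vector bundle $\cW$ on $\AA^{1} \times \rM_{X}(r, L)$ with $\cW|_{\{0\} \times \rM_{X}(r, L)} \cong \mathrm{gr}\,\cF_{\bp} \cong \bigotimes_{i}q_{i}^{*}\sigma^{*}\cE_{p_{i}}$ and $\cW|_{\{t\} \times \rM_{X}(r, L)} \cong \cF_{\bp}$ for every $t \ne 0$; this is exactly the family in the statement. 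The base-change reduction and the Rees construction are formal; I expect the real work — and the step most prone to error — to be the identification of $\mathrm{gr}\,\cF_{\bp}$, in particular verifying that the $\mathfrak{m}$-adic filtration descends to genuine subbundles over $\rM_{X}(r, L)$ and that the $S_{\mathbf{m}}$-invariants leave exactly $\bigotimes_{j}\cE_{q_{j}}^{\otimes m_{j}}$ with no extra Schur summands.
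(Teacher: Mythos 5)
The paper does not actually prove this lemma: it is imported verbatim from \cite[Corollary 2.8]{TT21}, so there is no internal argument to compare against. Your proposal is a correct, self-contained proof, and it runs along the same basic mechanism as the cited source: degenerate $\cF_{\bp}$ to the associated graded of a filtration coming from the non-reduced structure of the fibre $Z_{\bp}=q^{-1}(\bp)$. The two points you flag as the most delicate do go through. Writing $M:=\cG|_{Z_{\bp}\times\rM_{X}(r,L)}$, each graded quotient $(q'_{*}(\mathfrak{m}^{i}M/\mathfrak{m}^{i+1}M))^{S_{n}}$ is the $S_{n}$-invariant direct summand of the pushforward of a locally free sheaf on $Z_{\bp}^{\mathrm{red}}\times\rM_{X}(r,L)$ (a finite cover of $\rM_{X}(r,L)$), hence is locally free; therefore each step of the filtration is locally split and the Rees sheaf is indeed a vector bundle on $\AA^{1}\times\rM_{X}(r,L)$ with the stated fibres. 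For the identification of the associated graded, the multiplication map $(\mathfrak{m}^{i}/\mathfrak{m}^{i+1})\otimes(M/\mathfrak{m}M)\to\mathfrak{m}^{i}M/\mathfrak{m}^{i+1}M$ is $S_{n}$-equivariant and is an isomorphism by a rank count using local freeness of $M$ over $\cO_{Z_{\bp}}$, so $\mathrm{gr}\,M\cong(\mathrm{gr}\,\cO_{Z_{\bp}})\otimes\cG|_{Z_{\bp}^{\mathrm{red}}\times\rM_{X}(r,L)}$ equivariantly; combining Frobenius reciprocity for the orbit $S_{n}/\prod_{j}S_{m_{j}}$, Chevalley's theorem that the coinvariant algebra $R_{m}$ is the regular representation, and $(V\otimes\CC[G])^{G}\cong V$, the invariants are exactly $\bigotimes_{j}\cE_{q_{j}}^{\otimes m_{j}}$ with no extra Schur summands. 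The only cosmetic caveat is that the resulting isomorphism $\mathrm{gr}\,\cF_{\bp}\cong\bigotimes_{i}\sigma^{*}\cE_{p_{i}}$ is non-canonical (assembled Schur piece by Schur piece), which is all the lemma requires.
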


\begin{definition}\label{def:FMfunctor}
Let $\Phi_{\cF} : \rD^{b}(X_{n}) \to \rD^{b}(\rM_{X}(r, L))$ be the Fourier-Mukai transform with the kernel $\cF$ in \eqref{eqn:kernelF}, that is, 
\[
	\Phi_{\cF}(E^{\bullet}) = Rp_{2 *}(Lp_{1}^{*}E^{\bullet} \otimes^{L} \cF).
\]
\end{definition}

\begin{remark}
The definition of the Fourier-Mukai kernel $\cF$ and the Fourier-Mukai transform $\Phi_{\cF}$ depend on a choice of Poincar\'e bundle $\sigma^{*}\cE$. To be more precise, we denote $\cF^{\sigma} := (q_{*}(\sigma^{*}\cE)^{\otimes n})^{S_{n}}$. Then for two sections $\sigma, \sigma' : \rM_{X}(r, L) \to \cM_{X}(r, L)^{s}$, ${\sigma'}^{*}\cE = (\sigma^{*}\cE) \otimes p_{2}^{*}A$ for some $A \in \Pic(\rM_{X}(r, L))$. Then 
\[
	\cF^{\sigma'} = (q_{*}({\sigma'}^{*}\cE)^{\otimes n})^{S_{n}} \cong (q_{*}(\sigma^{*}\cE \otimes p_{2}^{*}A)^{\otimes n})^{S_{n}} \cong (q_{*}(\sigma^{*}\cE)^{\otimes n})^{S_{n}} \otimes p_{2}^{*}A^{n} = \cF^{\sigma}\otimes p_{2}^{*}A^{n}.
\]
By the projection formula, 
\[
	\Phi_{\cF^{\sigma'}}(E^{\bullet}) = \Phi_{\cF^{\sigma}}(E^{\bullet}) \otimes A^{n}.
\]
Therefore, the fully-faithfulness does not change and we may choose any Poincar\'e bundle.
\end{remark}

We will prove Theorem \ref{thm:embedding} by showing that the functor $\Phi_{\cF} : \rD^{b}(X_{n}) \to \rD^{b}(\rM_{X}(r, L))$ is fully-faithful. 

\subsection{Bondal-Orlov criterion}

We show the fully-faithfulness by employing the classical result of Bondal and Orlov \cite[Theorem 1.1]{BO95}. We state a version adapted to our situation.

\begin{theorem}[Bondal-Orlov criterion]\label{thm:BondalOrlov}
The functor $\Phi_{\cF}$ is fully faithful if and only if the following three conditions hold:
\begin{enumerate}
\item (Simplicity) $\rH^{0}(\rM_{X}(r, L), \cF_{\bp} \otimes \cF_{\bp}^{*}) \cong \CC$.
\item (Cohomological boundedness) $\rH^{i}(\rM_{X}(r, L), \cF_{\bp} \otimes \cF_{\bp}^{*}) = 0$ for $i > n$. 
\item (Cohomological triviality) $\rH^{i}(\rM_{X}(r, L), \cF_{\bp} \otimes \cF_{\bq}^{*}) = 0$ for all $\bp \ne \bq \in X_{n}$ and $i \in \ZZ$.
\end{enumerate}
\end{theorem}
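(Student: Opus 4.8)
The plan is to obtain this as a direct translation of the Bondal--Orlov fully-faithfulness criterion in its original form. Recall \cite[Theorem 1.1]{BO95}: for smooth projective varieties $Y$ and $Z$ and a kernel $\cK \in \rD^{b}(Y \times Z)$, the transform $\Phi_{\cK}$ is fully faithful if and only if for all closed points $y_{1}, y_{2} \in Y$ one has
\[
	\rHom_{\rD^{b}(Z)}(\Phi_{\cK}(k(y_{1})), \Phi_{\cK}(k(y_{2}))[i]) =
	\begin{cases} \CC & \text{if } y_{1} = y_{2} \text{ and } i = 0,\\ 0 & \text{if } y_{1} \ne y_{2}, \text{ or if } i \notin [0, \dim Y].\end{cases}
\]
In our situation $Y = X_{n}$, which is smooth projective since it is a symmetric product of a smooth projective curve; $Z = \rM_{X}(r, L)$, which is smooth projective by \cite{Ram73} as $(r, d) = 1$; and $\cK = \cF$ is the vector bundle of Definition \ref{def:FMkernel}, regarded as an object of $\rD^{b}(X_{n} \times \rM_{X}(r, L))$ concentrated in cohomological degree $0$. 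So the criterion applies and it remains to rewrite its hypotheses in the desired form.

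First I would identify $\Phi_{\cF}(k(\bp))$ with the restricted bundle $\cF_{\bp}$ of Definition \ref{def:restrictedkernel}. Since $p_{1}$ is flat, $Lp_{1}^{*}k(\bp) = \cO_{\{\bp\} \times \rM_{X}(r, L)}$; since $\cF$ is locally free, $\cO_{\{\bp\} \times \rM_{X}(r, L)} \otimes^{L} \cF = \cF|_{\{\bp\} \times \rM_{X}(r, L)}$ with no higher Tor; and $p_{2}$ restricts to an isomorphism $\{\bp\} \times \rM_{X}(r, L) \xrightarrow{\sim} \rM_{X}(r, L)$, so $Rp_{2*}$ contributes nothing further. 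Hence $\Phi_{\cF}(k(\bp)) \cong \cF_{\bp}$ in $\rD^{b}(\rM_{X}(r, L))$, concentrated in degree $0$. Consequently
\[
	\rHom_{\rD^{b}(\rM_{X}(r, L))}(\Phi_{\cF}(k(\bp)), \Phi_{\cF}(k(\bq))[i]) = \Ext^{i}_{\rM_{X}(r, L)}(\cF_{\bp}, \cF_{\bq}) = \rH^{i}(\rM_{X}(r, L), \cF_{\bp}^{*} \otimes \cF_{\bq}),
\]
using local freeness of $\cF_{\bp}$ for the last equality. In particular these groups vanish for $i < 0$ automatically, so the only part of the condition ``$i \notin [0, \dim X_{n}]$'' carrying content is $i > n = \dim X_{n}$.

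Matching term by term, and using that $\cF_{\bp}^{*} \otimes \cF_{\bq} \cong \cF_{\bq} \otimes \cF_{\bp}^{*}$ and that the stated vanishing over all ordered pairs is self-dual, the three Bondal--Orlov requirements become exactly (1) $\rH^{0}(\rM_{X}(r, L), \cF_{\bp} \otimes \cF_{\bp}^{*}) \cong \CC$; (2) $\rH^{i}(\rM_{X}(r, L), \cF_{\bp} \otimes \cF_{\bp}^{*}) = 0$ for $i > n$; and (3) $\rH^{i}(\rM_{X}(r, L), \cF_{\bp} \otimes \cF_{\bq}^{*}) = 0$ for all $\bp \ne \bq$ and all $i$. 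This is the list in the statement, and the ``only if'' direction follows the same dictionary (applying full faithfulness to $\rHom_{\rD^{b}(X_{n})}(k(\bp), k(\bq)[i]) = \wedge^{i}T_{\bp}X_{n}$ for $\bp = \bq$ and $0$ otherwise), so both implications are contained in \cite{BO95}. There is no real obstacle here; the only point demanding care is the identification $\Phi_{\cF}(k(\bp)) \cong \cF_{\bp}$ as a complex concentrated in degree zero, which is where one uses that $q : X^{n} \times \rM_{X}(r, L) \to X_{n} \times \rM_{X}(r, L)$ is finite flat, so that $\cF = (q_{*}(\sigma^{*}\cE)^{\otimes n})^{S_{n}}$ is genuinely a vector bundle, hence flat over $X_{n}$, leaving the $\rHom$-groups in the derived category equal to ordinary $\Ext$-groups with no contribution from other cohomological degrees.
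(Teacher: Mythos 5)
Your proposal is correct and follows exactly the route the paper intends: the paper states this as a version of \cite[Theorem 1.1]{BO95} ``adapted to our situation'' and leaves the translation implicit, while you have simply spelled out that translation (flatness of $p_{1}$, local freeness of $\cF$ giving $\Phi_{\cF}(k(\bp)) \cong \cF_{\bp}$ in degree zero, and the identification of the $\Ext$-groups with sheaf cohomology of $\cF_{\bp} \otimes \cF_{\bq}^{*}$). No discrepancy with the paper's argument.
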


\begin{remark}\label{rmk:boundedandtriviality}
We say a coherent sheaf $F$ on an algebraic stack (or a scheme) $\cM$ is \emph{cohomologically bounded up to degree $n$} if $\rH^{i}(\cM, F) = 0$ for all $i > n$. $F$ is \emph{cohomologically trivial} if $\rH^{i}(\cM, F) = 0$ for all $i \ge 0$. 
\end{remark}

%%%%%%%%%%%%%%%%%%%%%%%%%%%%%%%%%%%%%

\section{Quantization}\label{sec:quantization}

The cohomological boundedness/triviality on the coarse moduli space will be shown by comparing the cohomology groups with those on the moduli stack. Then, we need to estimate the effect of unstable strata. By employing \cite{HL15}, we show that the contribution of the unstable loci is negligible if $n$ is relatively small.

\subsection{Contribution of unstable loci}\label{ssec:unstableloci}

Let $V$ be a smooth quasi-projective variety with a reductive group $G$ action. Let $A$ be a $G$-linearization on $V$. We review \cite{HL15} which explains how to compare $\rD^{b}([V/G])$ and $\rD^{b}([V^{ss}(A)/G])$ where the latter quotient stack $[V^{ss}(A)/G]$ has a good moduli space $V\git_{A}G$ (thus $\pi^{*} : \rD^{b}(V\git_{A}G) \to \rD^{b}([V^{ss}(A)/G])$ is fully-faithful). 

The Kempf-Ness stratification of $V$ can be constructed as the following. For each one-parameter subgroup $\lambda : \CC^{*} \to G$, let $Z \subset V^{\lambda}$ be an irreducible component of the torus fixed locus $X^{\lambda}$. Then, one may compute the numerical invariant 
\[
	\mu(\lambda, Z) := -\frac{\mathrm{wt}_{\lambda}A|_{Z}}{|\lambda|}.
\]
Take a pair $(\lambda, Z)$ such that $\mu(\lambda, Z)$ is the largest positive one. We set $Y_{\lambda, Z} := \{x \in V\;|\; \lim_{t \to 0}\lambda(t)\cdot x \in Z\}$ and $S_{\lambda, Z} := G\cdot Y_{\lambda, Z}$. Then $S_{\lambda, Z}$ is a stratum in the unstable locus. One can continue this construction by starting with $V \setminus S_{\lambda, Z}$, and the $G$-action on it. 

\begin{theorem}[Quantization Theorem \protect{\cite[Theorem 3.29]{HL15}}]\label{thm:quantization}
Let $\eta$ be the $\lambda$-weight of $\wedge^{\mathrm{top}}N_{S_{\lambda, Z}/V}^{*}|_{Z}$. Let $E^{\bullet} \in \rD^{b}([V/G])$, and suppose that the $\lambda$-weight of $\mathcal{H}^{*}(E^{\bullet}|_{Z})$ is supported on $(-\infty, \eta)$. Then 
\[
	\rH^{*}([V/G], E^{\bullet}) \cong \rH^{*}([V^{ss}(A)/G], E^{\bullet}|_{[V^{ss}(A)/G]}).
\]
\end{theorem}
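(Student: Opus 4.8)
The statement is \cite[Theorem 3.29]{HL15}; here is the shape of the argument. The plan is a d\'evissage along the Kempf--Ness stratification, reducing the comparison to the vanishing of cohomology ``supported'' on one unstable stratum at a time. Since the strata of the unstable locus are indexed so that the union of those with $\mu$-value $\geq c$ is closed for every $c$, the top stratum $[S_{\lambda,Z}/G]$ is a closed substack of $[V/G]$ whose open complement $[V'/G]$, with $V' := V \setminus S_{\lambda,Z}$, still has $V^{ss}(A)$ as its semistable locus and the remaining strata as its unstable part. Writing $i \colon [S_{\lambda,Z}/G] \hookrightarrow [V/G]$ for the closed embedding, the local cohomology long exact sequence
\[
	\cdots \to \rH^{*}([S_{\lambda,Z}/G], i^{!}E^{\bullet}) \to \rH^{*}([V/G], E^{\bullet}) \to \rH^{*}([V'/G], E^{\bullet}|_{[V'/G]}) \to \cdots
\]
shows it suffices to prove $\rH^{*}([S_{\lambda,Z}/G], i^{!}E^{\bullet}) = 0$ and then iterate on $[V'/G]$.

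The next step is to unwind the geometry of the stratum together with the sheaf $i^{!}E^{\bullet}$. Writing $L$ for the centralizer of $\lambda$ (a Levi), $P$ for the associated parabolic, and $Y := Y_{\lambda,Z}$ for the attracting set, one has $S_{\lambda,Z} \cong G \times_{P} Y$, hence $[S_{\lambda,Z}/G] \cong [Y/P]$, and $Y \to Z$ is the total space of a $P$-equivariant vector bundle all of whose $\lambda$-weights are strictly positive. Since $i$ is a regular closed immersion of codimension $c = \codim(S_{\lambda,Z}, V)$ between smooth stacks, $i^{!}E^{\bullet} \cong Li^{*}E^{\bullet} \otimes \det N_{S_{\lambda,Z}/V}[-c]$. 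The point is that $N_{S_{\lambda,Z}/V}|_{Z}$ is the sum of the negative $\lambda$-weight spaces of $T_{V}|_{Z}$, so $\det N_{S_{\lambda,Z}/V}|_{Z}$ has $\lambda$-weight $-\eta$; hence the $\lambda$-weights occurring in $\mathcal{H}^{*}(i^{!}E^{\bullet}|_{Z})$ lie in $(\text{weights of }\mathcal{H}^{*}(E^{\bullet}|_{Z})) - \eta \subseteq (-\infty,\eta) - \eta = (-\infty,0)$, i.e.\ they are all strictly negative.

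To finish, filter $i^{!}E^{\bullet}|_{Y}$ by powers of the ideal of $Z \subset Y$ and push down along $Y \to Z$: every graded piece is $\mathcal{H}^{*}(i^{!}E^{\bullet}|_{Z})$ tensored with a symmetric power of the conormal bundle $N_{Z/Y}^{*}$, which again carries only strictly negative $\lambda$-weights, so the pushforward to $[Z/L]$ still has only strictly negative $\lambda$-weights in its cohomology sheaves. Since the image of $\lambda$ is a central $\mathbb{G}_{m} \subset L$ and, in characteristic $0$ with $L$ reductive, $\rH^{*}([Z/L], -)$ is the $L$-invariant part of $\rH^{*}(Z, -)$ and in particular lands in the $\lambda$-weight-zero part, and no weight-zero contribution occurs, we conclude $\rH^{*}([S_{\lambda,Z}/G], i^{!}E^{\bullet}) = 0$.

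The parts I expect to require genuine care are the inductive bookkeeping in the first step --- verifying that after removing $S_{\lambda,Z}$ the relevant weight hypothesis propagates, in the correct stratum-by-stratum form, to the lower strata --- and the convergence of the infinite symmetric-power filtration in the last step; these are exactly where \cite{HL15} does its work, and both become routine once one pins down the precise formulation of the weight condition along each stratum.
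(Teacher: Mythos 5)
This theorem is not proved in the paper: it is quoted verbatim (up to notational adaptation) from Halpern-Leistner, and the paper's only contribution here is the remark that one iterates over the strata. Your sketch is a faithful outline of the actual argument in \cite{HL15}: peeling off the closed top stratum via the local cohomology triangle, computing $i^{!}E^{\bullet}$ on the smooth stratum via $Li^{*}E^{\bullet}\otimes\det N_{S_{\lambda,Z}/V}[-c]$ so that the hypothesis ``weights $<\eta$'' becomes ``weights $<0$,'' pushing down the attracting fibration $Y\to Z$ (which only adds negative weights from $\Sym^{\bullet}N_{Z/Y}^{*}$), and killing everything by taking invariants under the central $\mathbb{G}_{m}=\lambda$ in the Levi. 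The two caveats you flag---propagating the weight hypothesis stratum by stratum, and the convergence of the adic filtration along $Y\to Z$---are indeed exactly where \cite{HL15} invests its technical effort, so I see no gap in the outline.
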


Thus, if the given sheaf does not have a too large $\lambda$-weight, then its cohomology on the semistable locus coincides with the cohomology over the whole quotient stack. By induction, this theorem is valid for the GIT quotient with many components in the unstable loci.

\subsection{The case of moduli space of parabolic bundles}\label{ssec:quantizationmoduli}

We apply Theorem \ref{thm:quantization} to $\cM_{X, \bp}(r, \cO, \ba)$. The main result is Corollary \ref{cor:unstablelociarenegligible}, which shows that the unstable loci are negligible in the cohomology calculation.

Let $S = S_{(s, e, \{J^{i}\})}$ be a stratum of the unstable locus, and let the associated one-parameter subgroup be $\lambda(t)$. The $\lambda$-fixed locus $Z \subset S$ parametrizes data
\[
	\{[\cO^{\chi^{+}} \oplus \cO^{\chi^{-}} \stackrel{\varphi}{\to}
	E^{+}(m) \oplus E^{-}(m) \to 0], \{W_{\bullet}^{i}\}\}, 
\]
where $\varphi = \varphi^{+}\oplus \varphi^{-}$, $E^{+}$ (resp. $E^{-}$) is of degree $e$ (resp. $-e$), and $W_{\bullet}^{i} \subset E^{+} \cup E^{-}$, in other words, $W_{j}^{i} = (E^{+}|_{p_{i}}\cap W_{j}^{i}) \oplus (E^{-}|_{p_{i}} \cap W_{j}^{i})$. Here $\chi^{+}(m) = \dim \rH^{0}(E^{+}(m))$ and $\chi^{-}(m) = \dim \rH^{0}(E^{-}(m))$. 

If we take a general point of $S$, then both $E^{+}$ and $E^{-}$ are simple; hence $\lambda(t)$ acts on each factor as a scalar multiplication. So, up to normalization, $\lambda(t)$ acts as 
\[
	\left(\begin{array}{cc}t^{-\chi^{-}} & 0 \\ 0 & t^{\chi^{+}}\end{array}\right).
\]
For a general point in $Z$, $(E^{+}, \{W|_{E^{+} \bullet}^{i}\})$ has the following property. For any $j$ with $J_{k}^{i} \le j < J_{k+1}^{i}$, $\dim E^{+}|_{p_{i}} \cap W_{j}^{i} = k$ and $\dim E^{-}|_{p_{i}}  \cap W_{j}^{i} = j-k$. Therefore, 
\[
	\dim \CC^{\chi^{+}} \cap \psi_{i}^{-1}(W_{j}^{i}) = \chi^{+} - s + k,
\]
and 
\[
	\dim \CC^{\chi^{-}} \cap \psi_{i}^{-1}(W_{j}^{i}) = \chi^{-} - (r - s) + j - k.
\]
Thus, 
\[
\begin{split}
	\mathrm{wt}_{\lambda}L_{p_{i}, \omega_{j}} &= -\chi^{-}(\chi^{+}-s + k) + \chi^{+}(\chi^{-}-r+s+j-k) = \chi(s-k) - \chi^{+}(r-j)\\
	&= r(m+1-g)(s-k) - (e+s(m+1-g))(r-j)\\
	&= (m+1-g)(sj - rk) - e(r-j).
\end{split}
\]
Since $j -k \le \dim E^{-}|_{p_{i}} = r-s$, $sj - rk \le s(r-s+k) - rk = (r-s)(s-k)$. And if $m \gg 0$, because $\chi$ is a linear polynomial for $m$ but $e$ and  $j$ are bounded constants, 
\[
\begin{split}
	\mathrm{wt}_{\lambda}L_{p_{i}, \omega_{j}} &=
	(m+1-g)(sj-rk) - e(r-j) \le (m+1-g)(r-s)(s-k) -e(r-j)\\
	&= \chi\frac{(r-s)(s-k)}{r} - e(r-j) < \chi \frac{(r-s)(s-k)+1}{r}
	< \chi\frac{(r-s)s+1}{r}.
\end{split}
\]
Therefore, for a dominant weight $\sum a_{j}\omega_{j}$ with $a_{j} \ge 0$, the $\lambda$-weight of the associated line bundle $L_{p_{i}, \lambda} = L_{p_{i}, \sum a_{j}\omega_{j}}$ is at most 
\[
	\chi (\sum a_{j})\frac{(r-s)s+1}{r}.
\]

On the other hand, by \cite[Section 7]{Tha96}, $\lambda(t)$ acts on $N_{S/V}|_{Z}$ by a multiplication of $t^{-\chi}$. Thus, 
\[
	\eta = \mathrm{wt}_{\lambda}\wedge^{\mathrm{top}}N_{S/V}^{*}|_{Z} = \chi \cdot \mathrm{rank}\; N_{S/V} \ge \chi \left(s(r-s)(g-1) + 1\right)
\]
by Lemma \ref{lem:unstablecodim}. In particular, if $\sum a_{j} \le r((r-s)s(g-1)+1)/((r-s)s+1)$, 
\[
	\mathrm{wt}_{\lambda} L_{p_{i}, \sum a_{j}\omega_{j}} < \chi (\sum a_{j})\frac{(r-s)s+1}{r} \le \chi ((r-s)s(g-1)+1) \le \eta.
\]
The minimum of $r((r-s)s(g-1)+1)/((r-s)s+1)$ for $1 \le s \le r-1$ is achieved when $s = 1$, hence it is $r((r-1)(g-1)+1)/(r-1+1) = (r-1)(g-1)+1$. Then we obtain the following result by the Quantization Theorem (Theorem \ref{thm:quantization}). Note that we use $\lambda$ to describe a partition, not a one-parameter subgroup in the statement.

\begin{proposition}\label{prop:unstablelociarenegligible}
For a partition $\lambda = \sum a_{j}\omega_{j}$ with $\sum a_{j} \le (r-1)(g-1)+1$,  
\[
	\rH^{*}([\widetilde{\bR}(m)/\SL_{\chi_{m}}],  L_{p_{i}, \lambda}) \cong \rH^{*}([\widetilde{\bR}(m)^{ss}(A(\ba))/\SL_{\chi_{m}}],  L_{p_{i}, \lambda}) \cong \rH^{*}(\cM_{X}(r, \cO, \ba), L_{p_{i}, \lambda}).
\]
\end{proposition}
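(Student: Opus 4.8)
The proof of Proposition \ref{prop:unstablelociarenegligible} is essentially a bookkeeping application of the Quantization Theorem to the GIT setup from Section \ref{ssec:GITconstruction}, and the computations in Section \ref{ssec:quantizationmoduli} have already done almost all the work. Here is how I would assemble it.

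\textbf{Step 1: Reduce the claim to a weight comparison.} Recall from Section \ref{ssec:GITconstruction} that $\rM_{X, \bp}(r, \cO, \ba) = \widetilde{\bR}(m) \git_{A(\ba)} \SL_{\chi_m}$, with good moduli space morphism $\pi$ from the semistable quotient stack, and that $\cM_{X, \bp}(r, \cO, \ba) = [\widetilde{\bR}(m)^{ss}(A(\ba))/\SL_{\chi_m}]$. So the third isomorphism in the statement is just the identification of the semistable quotient stack with the moduli stack of $\ba$-semistable parabolic bundles, and the content is the first isomorphism: that $\rH^*$ of $L_{p_i,\lambda}$ does not change when one passes from $[\widetilde{\bR}(m)/\SL_{\chi_m}]$ to its semistable locus. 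By Theorem \ref{thm:quantization} (applied inductively over the finitely many Kempf--Ness strata $S_{(s,e,\{J^i\})}$), it suffices to check for each stratum $S = S_{(s,e,\{J^i\})}$ with one-parameter subgroup $\lambda(t)$ that the $\lambda$-weight of $L_{p_i,\lambda}|_Z$ is strictly less than $\eta = \mathrm{wt}_\lambda \wedge^{\mathrm{top}} N^*_{S/\widetilde{\bR}(m)}|_Z$ on each component $Z$ of the fixed locus.

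\textbf{Step 2: Invoke the weight estimates.} This is exactly what Section \ref{ssec:quantizationmoduli} establishes: for a general point of $Z$ the one-parameter subgroup acts by $\mathrm{diag}(t^{-\chi^-}, t^{\chi^+})$, and the computation of $\mathrm{wt}_\lambda L_{p_i,\omega_j}$ gives, for a dominant weight $\lambda = \sum a_j \omega_j$ with all $a_j \ge 0$, the bound $\mathrm{wt}_\lambda L_{p_i,\lambda} \le \chi(\sum a_j)\frac{(r-s)s+1}{r}$, while $\eta = \chi \cdot \mathrm{rank}\, N_{S/\widetilde{\bR}(m)} \ge \chi(s(r-s)(g-1)+1)$ by Lemma \ref{lem:unstablecodim}. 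Comparing the two and minimizing $r((r-s)s(g-1)+1)/((r-s)s+1)$ over $1 \le s \le r-1$ (the minimum being $(r-1)(g-1)+1$, attained at $s=1$) shows that $\sum a_j \le (r-1)(g-1)+1$ forces $\mathrm{wt}_\lambda L_{p_i,\lambda}|_Z < \eta$. Since $L_{p_i,\lambda}$ is a line bundle concentrated in a single cohomological degree, the hypothesis on $\mathcal{H}^*(E^\bullet|_Z)$ in Theorem \ref{thm:quantization} is exactly this weight inequality, so the theorem applies and yields the first isomorphism; iterating over all strata completes the argument.

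\textbf{Step 3: Points of caution.} The one genuine subtlety is that the weight computations in Section \ref{ssec:quantizationmoduli} are carried out at a \emph{general} point of $Z$ (where $E^+$ and $E^-$ are simple, so $\lambda$ acts by scalars on each block, and the flag intersection dimensions take their generic values). To invoke Theorem \ref{thm:quantization} one needs the weight bound on all of $Z$; since the $\lambda$-weight of a line bundle restricted to the (connected) fixed component $Z$ is constant, it is determined by its value at the generic point, so this is not actually an obstruction, but it should be remarked. A second point is that the stratification index set used here, $(s,e,\{J^i\})$, must be matched with the abstract Kempf--Ness data $(\lambda, Z)$ of Section \ref{ssec:unstableloci}; this is the standard dictionary between destabilizing parabolic subbundles and destabilizing one-parameter subgroups, already implicit in Definition \ref{def:unstablestratum}. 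Beyond these two remarks the proof is a direct citation of the preceding estimates, so I expect no real difficulty; the ``hard part'' was the weight bookkeeping, which is already in place above.
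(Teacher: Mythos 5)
Your proposal is correct and follows exactly the paper's route: the proposition is stated in the paper as an immediate consequence of the weight computations carried out in Section \ref{ssec:quantizationmoduli} (the bound $\mathrm{wt}_{\lambda} L_{p_{i},\lambda} < \chi(\sum a_{j})\frac{(r-s)s+1}{r}$ versus $\eta \ge \chi(s(r-s)(g-1)+1)$, minimized at $s=1$) together with the Quantization Theorem applied stratum by stratum, which is precisely your Steps 1 and 2. Your remarks in Step 3 about constancy of the weight on a connected fixed component and the dictionary between $(s,e,\{J^{i}\})$ and the Kempf--Ness data are sensible clarifications but do not change the argument.
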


\begin{corollary}\label{cor:unstablelociarenegligible}
For a partition $\lambda = \sum a_{j}\omega_{j}$ with $\sum a_{j} \le (r-1)(g-1)+1$, 
\begin{equation}\label{eqn:unstablelociarenegligible}
	\rH^{*}(\cM_{X, \bp}(r, \cO),  L_{p_{i}, \lambda}) \cong \rH^{*}(\cM_{X, \bp}(r, \cO, \ba), L_{p_{i}, \lambda}).
\end{equation}
\end{corollary}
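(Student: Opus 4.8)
The plan is to deduce the stack-level statement \eqref{eqn:unstablelociarenegligible} from Proposition \ref{prop:unstablelociarenegligible} by comparing cohomology along the forgetful fibrations to $\cM_X(r,\cO)$ and $\cM_X(r,\cO,\ba)$, using that the line bundles $L_{p_i,\lambda}$ are pulled back from the flag bundle directions in a way compatible with the two stability choices. First I would recall from Section \ref{ssec:Schur} the identification $\pi_* L_{p_i,\lambda} = S_\lambda \cE_{p_i}$ (equation \eqref{eqn:pushforwardformula}) together with the higher-direct-image vanishing that yields \eqref{eqn:cohomologycomparison}, so that
\[
	\rH^*(\cM_{X,\bp}(r,\cO), L_{p_i,\lambda}) \cong \rH^*(\cM_X(r,\cO), S_\lambda \cE_{p_i}).
\]
Then I would observe that the GIT atlas $\widetilde{\bR}(m) \to [\widetilde{\bR}(m)/\SL_{\chi_m}]$ of Section \ref{ssec:GITconstruction} presents $\cM_{X,\bp}(r,\cO)$ (for $\ba$ in the chamber we want, with the flag directions carried along), and that $L_{p_i,\omega_j}$ is the restriction of the Grassmannian hyperplane bundle in \eqref{eqn:GITembedding}; hence the left-hand side of \eqref{eqn:unstablelociarenegligible} is exactly $\rH^*([\widetilde{\bR}(m)/\SL_{\chi_m}], L_{p_i,\lambda})$, the left-hand group of Proposition \ref{prop:unstablelociarenegligible}.

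Next I would match the right-hand side. By Proposition \ref{prop:unstablelociarenegligible}, under the hypothesis $\sum a_j \le (r-1)(g-1)+1$ the cohomology on the full quotient stack agrees with the cohomology on the semistable locus $[\widetilde{\bR}(m)^{ss}(A(\ba))/\SL_{\chi_m}]$, which is precisely $\cM_{X,\bp}(r,\cO,\ba)$ (since $\widetilde{\bR}(m)\git_{A(\ba)}\SL_{\chi_m} \cong \rM_{X,\bp}(r,\cO,\ba)$ and the semistable locus of the atlas gives the semistable substack). Thus
\[
	\rH^*(\cM_{X,\bp}(r,\cO), L_{p_i,\lambda}) \cong \rH^*([\widetilde{\bR}(m)/\SL_{\chi_m}], L_{p_i,\lambda}) \cong \rH^*(\cM_{X,\bp}(r,\cO,\ba), L_{p_i,\lambda}),
\]
which is \eqref{eqn:unstablelociarenegligible}. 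The only subtlety is ensuring the presentation of the \emph{stack} $\cM_{X,\bp}(r,\cO)$ — not merely of the good moduli space — via $[\widetilde{\bR}(m)/\SL_{\chi_m}]$: this holds for $m \gg 0$ because then every parabolic bundle of the fixed numerical type is $m$-regular, so $\bR(m)$ is a $\GL_{\chi_m}$-atlas for $\cM_X(r,\cO)$ (cf.\ the GIT construction of the moduli stack), and passing to flag bundles over both sides is $\SL_{\chi_m}$-equivariant and preserves this.

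The main obstacle is bookkeeping rather than a new idea: one must check that the choice of $m$ can be made simultaneously large enough for (i) $m$-regularity of \emph{all} bundles appearing (stable, semistable, and unstable of the relevant strata), so that the atlas really represents the stack, and (ii) the asymptotic inequality $\mathrm{wt}_\lambda L_{p_i,\sum a_j\omega_j} < \chi\frac{(r-s)s+1}{r}$ used in Section \ref{ssec:quantizationmoduli} to apply the Quantization Theorem. Since there are only finitely many unstable strata $S_{(s,e,\{J^i\})}$ contributing to a fixed Hilbert polynomial and finitely many relevant partitions bounded by $(r-1)(g-1)+1$, a single such $m$ exists; with that fixed, the corollary follows by concatenating the three displayed isomorphisms. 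I would also remark that the same argument with $\ba$ small recovers, via \eqref{eqn:cohomologycomparison2}, the comparison with $\cM_X(r,\cO)^s$ when one later needs it, but for the corollary as stated only the two displayed identifications above are required.
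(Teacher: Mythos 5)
Your identification of the right-hand side of \eqref{eqn:unstablelociarenegligible} with the semistable quotient stack, and the appeal to Proposition \ref{prop:unstablelociarenegligible}, match the paper; the detour through $\pi_{*}L_{p_{i},\lambda}=S_{\lambda}\cE_{p_{i}}$ is harmless but unnecessary. The genuine gap is precisely the step you flag as ``the only subtlety'': the claim that for a single $m\gg 0$ the quotient stack $[\widetilde{\bR}(m)/\SL_{\chi_{m}}]$ presents all of $\cM_{X,\bp}(r,\cO)$. This is false. The stack of \emph{all} (not necessarily semistable) bundles of fixed rank and determinant is not quasi-compact: it contains, for instance, $\cO_{X}(Np)\oplus\cO_{X}(-Np)\oplus\cO_{X}^{\oplus(r-2)}$ for arbitrarily large $N$, and no single $m$ makes all of these $m$-regular. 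Relatedly, your assertion that there are ``only finitely many unstable strata $S_{(s,e,\{J^{i}\})}$'' holds only inside a fixed $\widetilde{\bR}(m)$; over the whole stack the degree $e$ of the destabilizing subbundle is unbounded. So there is no single atlas of finite type, and your three displayed isomorphisms cannot be concatenated as written.

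The paper's proof addresses exactly this point: it presents $\cM_{X,\bp}(r,\cO)$ as the increasing union (direct limit) of the open substacks $[\widetilde{\bR}(m)/\SL_{\chi_{m}}]$, which gives a comparison map from $\rH^{*}(\cM_{X,\bp}(r,\cO),L_{p_{i},\lambda})$ to the inverse limit of the groups $\rH^{*}([\widetilde{\bR}(m)/\SL_{\chi_{m}}],L_{p_{i},\lambda})$. Proposition \ref{prop:unstablelociarenegligible} is then applied for \emph{each} $m$, identifying every term of the inverse system with $\rH^{*}(\cM_{X,\bp}(r,\cO,\ba),L_{p_{i},\lambda})$, i.e.\ with the cohomology of a line bundle on the projective variety $\rM_{X,\bp}(r,\cO,\ba)$; finite-dimensionality yields the Mittag--Leffler condition, so the comparison map is an isomorphism and the system is eventually constant. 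To repair your argument you must replace ``choose $m$ large enough to present the stack'' by this limiting argument (or some other device controlling cohomology on the non-quasi-compact stack); the rest of your bookkeeping then goes through.
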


\begin{proof}
There is an open embedding 
\[
	[\widetilde{\bR}(m)/\SL_{\chi_{m}}] \subset [\widetilde{\bR}(m+1)/\SL_{\chi_{m+1}}], 
\]
and we have an isomorphism of stacks
\[
	\cM_{X, \bp}(r, \cO) \cong \lim_{m \rightarrow}[\widetilde{\bR}(m)/\SL_{\chi_{m}}].
\]
For $L_{p_{i}, \lambda}$ with $p_{i} \in \bp$, we have a morphism 
\begin{equation}\label{eqn:inverselimit}
	\rH^{*}(\cM_{X, \bp}(r, \cO), L_{p_{i}, \lambda}) \to \lim_{m \leftarrow}\rH^{*}([\widetilde{\bR}(m)/\SL_{\chi_{m}}], \iota_{m}^{*}L_{p_{i}, \lambda})
\end{equation}
where $\iota_{m} : [\widetilde{\bR}(m)/\SL_{\chi_{m}}] \subset \cM_{X, \bp}(r, \cO)$. By Proposition \ref{prop:unstablelociarenegligible}, each cohomology is identified with the cohomology of a line bundle on a projective variety $\rM_{X, \bp}(r, \cO, \ba)$. Hence it is finite-dimensional. Thus, the inverse system on the right hand side satisfies the Mittag-Leffler condition. Therefore, the map in \eqref{eqn:inverselimit} is an isomorphism. 
\end{proof}

%%%%%%%%%%%%%%%%%%%%%%%%%%%%%%%%%%%%%

\section{Cohomological boundedness}\label{sec:boundedness}

The classical Borel-Weil-Bott theorem provides a recipe to compute the cohomology of all line bundles over the full-flag variety $\Fl(V)$ of a finite-dimensional vector space $V$. This is extended to the case of the moduli stack of vector bundles with trivial determinant by Teleman \cite{Tel98}. In this section, we review the Borel-Weil-Bott-Teleman theory and its implication to the cohomology boundedness/triviality. 

\subsection{Borel-Weil-Bott for curves}\label{ssec:Teleman}

Recall that $\Pic(\cM_{X}(r, \cO)) \cong \ZZ$. Let $\Theta \in \Pic(\cM_{X}(r, \cO))$ be the ample generator. For the universal family $\cE$ over $\cM_{X}(r, \cO)$, a point $p \in X$, and a partition $\lambda \vdash n$ of length at most $r-1$, let $S_{\lambda}\cE_{p}$ be the Schur functor applied to $\cE_{p}$ (Section \ref{ssec:Schur}). If we have $k$ distinct points $\bp = (p_{1}, p_{2}, \cdots, p_{k})$ and $n$ partitions $\lambda_{1}, \lambda_{2}, \cdots, \lambda_{k}$, we may construct a vector bundle 
\[
	\Theta^{h} \otimes \bigotimes_{i=1}^{k}S_{\lambda_{i}}\cE_{p_{i}}.
\]
Teleman's extension of the Borel-Weil-Bott theorem evaluates the cohomology groups of these bundles. 

Here we give some relevant representation theoretic definitions, specialized to $\SL_{r}$. Let $h$ be a fixed nonnegative integer. Let $\mathfrak{h}$ be the Cartan subalgebra of $\mathfrak{sl}_{r}$. On the Euclidean space $\mathfrak{h}^{*}$ with the normalized Killing form $(- ,-)$, let $\{\beta_{j}\}$ be the set of fixed simple roots, and $\{\omega_{j}\}$ be the associated fundamental weights. With respect to the Killing form, $\{\omega_{j}\}$ is the dual basis of $\{\beta_{j}\}$. We denote by $\rho$ the half sum of all positive roots, or equivalently, the sum of all fundamental weights. The set of hyperplanes $\{\lambda \;|\; (\lambda, \beta) \in (h+r)\ZZ\}$ where $\beta$ is a root of $\mathfrak{h}$, divides $\mathfrak{h}^{*}$ into polyhedral chambers, the so-called \emph{Weyl alcoves}. The alcove containing the small highest weights is called the \emph{positive alcove}. The positive alcove is an open simplex bounded by $\{(\lambda, \beta_{j}) > 0\}_{1 \le j \le r-1}$ and $(\lambda, \sum_{j=1}^{r-1}\beta_{j}) < h+r$. 

We say a weight $\lambda$ is \emph{regular} if $\lambda+\rho$ is on the interior of one of the alcoves. Otherwise, $\lambda$ is called \emph{singular}. For a regular weight, the length $\ell(\lambda)$ is defined as the number of Weyl reflections that requires to map $\lambda+\rho$ to $\mu+\rho$ in the positive alcove. Equivalently, $\ell(\lambda)$ is the minimum number of hyperplanes required to cross to move from the alcove containing $\lambda+\rho$ to the positive alcove. In this situation, $\mu$ is called the \emph{ground form} of $\lambda$. Since $\rho = \sum \omega_{j}$ and $\{\omega_{j}\}$ is the dual basis of $\{\beta_{j}\}$, for a given weight $\lambda = \sum a_{j}\omega_{j}$, $\lambda + \rho$ is in the positive alcove if and only if $a_{j} \ge 0$ and $\sum a_{j} \le h$. 

\begin{remark}\label{rem:positivealcovewhenhiszero}
If $h = 0$, the only $\lambda$ such that $\lambda + \rho$ is in the positive alcove is $\lambda = 0$. 
\end{remark}

\begin{theorem}[\protect{\cite{Tel98}}]\label{thm:BWB}
Fix $h \ge 0$. Suppose that, with respect to $h + r$, all $\lambda_{i}$'s are regular. Then $\rH^{\ell}(\cM_{X}(r, \cO), \Theta^{h} \otimes \bigotimes_{i=1}^{k}S_{\lambda_{i}}\cE_{p_{i}}) \cong \rH^{0}(\cM_{X}(r, \cO), \Theta^{h} \otimes \bigotimes_{i=1}^{k}S_{\mu_{i}}\cE_{p_{i}})$, where $\ell = \sum \ell(\lambda_{i})$ is the sum of the lengths of $\lambda_{i}$'s and $\mu_{i}$ is the ground form of $\lambda_{i}$, and all other cohomology groups are trivial. If one of $\lambda_{i}$'s is singular, then $\rH^{*}(\cM_{X}(r, \cO), \Theta^{h} \otimes \bigotimes_{i=1}^{k}S_{\lambda_{i}}\cE_{p_{i}}) = 0$.
\end{theorem}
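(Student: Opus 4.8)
The plan is to reduce the statement to Teleman's quantization/vanishing theorem for the moduli stack of $G$-bundles applied to $G = \SL_r$, reinterpreting the Schur functor bundles $S_{\lambda_i}\cE_{p_i}$ and the determinant power $\Theta^h$ as the push-forward of a single line bundle from the moduli stack of quasi-parabolic $\SL_r$-bundles. Concretely, I would first invoke the identification, recorded in \eqref{eqn:pushforwardformula} and \eqref{eqn:cohomologycomparison}, that
\[
	\rH^{*}(\cM_{X}(r,\cO),\,\Theta^{h}\otimes\textstyle\bigotimes_{i=1}^{k}S_{\lambda_{i}}\cE_{p_{i}})\;\cong\;\rH^{*}\!\bigl(\cM_{X,\bp}(r,\cO),\,\Theta^{h}\otimes\textstyle\bigotimes_{i=1}^{k}L_{p_{i},\lambda_{i}}\bigr),
\]
so that the left-hand bundle becomes a genuine line bundle on the flag-bundle stack $\cM_{X,\bp}(r,\cO)=\times_{\cM_X(r,\cO)}\Fl(\cE|_{p_i})$. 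The data $(h,\lambda_1,\dots,\lambda_k)$ is exactly a dominant weight for the relevant affine/loop-group picture: $h$ is the level and the $\lambda_i$ are the highest weights decorating the parabolic points.

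Next I would quote Teleman's theorem in its original form (the ``quantization conjecture'' / Borel–Weil–Bott for the stack $\mathrm{Bun}_{\SL_r}(X)$, and its quasi-parabolic refinement), which states: cohomology of such a line bundle is computed by the affine Weyl group action on the weight $(\lambda_1,\dots,\lambda_k)$ shifted by $\rho$ at level $h+r$ (the shift by $r$ being the dual Coxeter number of $\SL_r$). If every $\lambda_i+\rho$ is regular with respect to the alcove structure at level $h+r$, there is a unique way to move each $\lambda_i+\rho$ into the positive alcove by a sequence of affine Weyl reflections, of minimal length $\ell(\lambda_i)$, landing on $\mu_i+\rho$; then the cohomology is concentrated in degree $\ell=\sum\ell(\lambda_i)$ and equals $\rH^0$ of the line bundle attached to $(\mu_1,\dots,\mu_k)$, i.e. the space of generalized parabolic theta functions / conformal blocks. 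If some $\lambda_i+\rho$ is singular (lies on an alcove wall), the line bundle is acyclic. Translating back via \eqref{eqn:pushforwardformula} gives precisely the statement, since $L_{p_i,\mu_i}$ pushes forward to $S_{\mu_i}\cE_{p_i}$ and the ``small'' positive-alcove condition $a_j\ge 0,\ \sum a_j\le h$ is exactly the condition for $\mu_i$ to index an effective theta bundle at level $h$.

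The only subtlety I would need to handle carefully is matching conventions: (i) confirming that the normalization of the Killing form and the shift by $\rho$ used in Section \ref{ssec:Teleman} agree with Teleman's, so that ``regular with respect to $h+r$'' in our sense coincides with ``regular at level $h+r$'' in his, and (ii) checking that the product structure over the $k$ parabolic points really does factor the length additively as $\ell=\sum\ell(\lambda_i)$ and the ground form pointwise as $\mu_i$ — this follows because the flag-bundle stack is a fiber product over $\cM_X(r,\cO)$ and the Leray spectral sequence for $\pi:\cM_{X,\bp}(r,\cO)\to\cM_X(r,\cO)$ degenerates, each $L_{p_i,\lambda_i}$ contributing its own Borel–Weil–Bott correction independently. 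I expect the main obstacle to be purely bookkeeping: there is no new geometric input beyond Teleman's theorem, but one must state the parabolic version precisely and verify it applies at arbitrary level $h\ge 0$ (including $h=0$, cf. Remark \ref{rem:positivealcovewhenhiszero}, where only $\mu_i=0$ survives and $\rH^0$ is one-dimensional exactly when all $\mu_i=0$). Once the dictionary is fixed, the proof is a direct citation.
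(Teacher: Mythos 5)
This theorem is stated in the paper as an imported result, cited directly to Teleman's work with no proof given beyond the dictionary between Schur functors of $\cE_{p_i}$ and line bundles on the parabolic stack (which the paper records immediately afterward as \eqref{eqn:cohidentification}). Your proposal — quote Teleman's affine Borel--Weil--Bott theorem at level $h$ with the $\rho$-shift by the dual Coxeter number $r$, and match conventions via the push-forward formula \eqref{eqn:pushforwardformula} and the degenerate Leray spectral sequence — is exactly the same approach, so it is correct and there is nothing further to add.
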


If we set $\bp = (p_{1}, p_{2}, \cdots, p_{k})$ as the set of parabolic points, the moduli stack $\cM_{X, \bp}(r, L)$ of the parabolic bundles over $(X, \bp)$ is obtained by taking the fiber product of the relative flag bundles. (Section \ref{ssec:moduliparabolic}) By the Borel-Weil-Bott and the degeneration of the Leray spectral sequence and the fact that the push-forward of $L_{p_{i}, \lambda_{i}}$ is $S_{\lambda_{i}}\cE_{p_{i}}$, we obtain 
\begin{equation}\label{eqn:cohidentification}
	\rH^{*}(\cM_{X, \bp}(r, \cO), \Theta^{h}\otimes \bigotimes_{i=1}^{k}L_{p_{i}, \lambda_{i}}) \cong
	\rH^{*}(\cM_{X}(r, \cO), \Theta^{h} \otimes \bigotimes_{i=1}^{k}S_{\lambda_{i}}\cE_{p_{i}}).
\end{equation}
Therefore, Teleman's theorem can be understood as the cohomology evaluation of line bundles on the moduli stack of parabolic bundles.

\subsection{Cohomological boundedness and triviality on the moduli stack}\label{ssec:cohomologyonstack}

In this section, all bundles are over $\cM_{X}(r, \cO)$. We start with a few combinatorial lemmas. Let $\cE$ be the universal bundle over $X \times \cM_{X}(r, \cO)$. Recall that a coherent sheaf $F$ is cohomologically bounded up to degree $n$ if $\rH^{i}(F) = 0$ for all $i > n$ (Remark \ref{rmk:boundedandtriviality}). 

Under the assumption $h = 0$, here we describe the Weyl reflection more explicitly. Let $\lambda = \sum a_{j}\omega_{j}$ be a dominant weight. Then $\lambda + \rho = \sum (a_{j}+1)\omega_{j}$. We set $h = 0$ in the statement of Theorem \ref{thm:BWB}. We investigate the effect of a Weyl reflection on $\lambda$. For notational simplicity, we set $\omega_{0} = \omega_{r} = 0$. There are $r(r-1)/2$ positive roots $\beta := \beta_{i} + \beta_{i+1} + \cdots + \beta_{j} = -\omega_{i-1}+\omega_{i} + \omega_{j} - \omega_{j+1}$ with $i \le j$. When $i = j$, we have $\beta = \beta_{i} = -\omega_{i-1}+2\omega_{i} - \omega_{i+1}$. The partition $\lambda$ is singular if there is a positive root $\beta$ such that $r | (\lambda+\rho, \beta)$. If $\lambda$ is regular, the Weyl alcove containing $\lambda+\rho$ is bounded by affine hyperplanes $(\lambda+\rho, \beta) \equiv \pm 1 \; \mathrm{mod}\;r$ for positive roots, and there is a positive root $\beta$ such that $(\lambda+\rho, \beta) \equiv 1\; \mathrm{mod}\; r$. The Weyl reflection along the $\beta$ moves $\lambda+\rho$ to $\lambda +\rho - \beta$. 

\begin{lemma}\label{lem:boundedness}
Suppose that $r > 2n$. For any partitions $\lambda$ and $\mu$ with $|\lambda| = |\mu| = n$, $S_{\lambda}\cE_{p} \otimes S_{\mu}\cE_{p}^{*}$ is cohomologically bounded up to degree $n$. 
\end{lemma}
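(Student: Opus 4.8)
The strategy is to reduce the cohomology of $S_\lambda\cE_p \otimes S_\mu\cE_p^*$ on $\cM_X(r,\cO)$ to that of a direct sum of bundles of the form $\bigotimes S_{\nu_j}\cE_{q_j}$ attached to \emph{distinct} points, and then apply Teleman's theorem (Theorem \ref{thm:BWB}) with $h = 0$. First I would note that $S_\mu\cE_p^* \cong S_{\mu^\vee}\cE_p \otimes (\det \cE_p)^{-\mu_1}$, but since $\det\cE$ is trivial along $X$ (determinant $\cO$), the twist by $\det\cE_p$ contributes nothing; more precisely, after passing to $\mathfrak{sl}_r$-weights, $S_\mu\cE_p^*$ is governed by the weight $-w_0(\mu)$. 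Thus $S_\lambda\cE_p \otimes S_\mu\cE_p^*$ decomposes, via the Littlewood--Richardson rule applied pointwise, into a direct sum $\bigoplus_\nu (S_\nu\cE_p)^{\oplus c_\nu}$ where each $\nu$ appearing satisfies $|\nu| \le |\lambda| + |\mu| = 2n$ (as an $\mathfrak{sl}_r$-weight $\nu = \sum a_j\omega_j$ with $\sum j\, a_j \equiv 0$, the relevant bound is on $\sum_j a_j$, which is at most $2n$ since each of $\lambda, \mu^\vee$ contributes a dominant weight with $\sum a_j \le n$).

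Second, I would invoke Theorem \ref{thm:BWB} with $h = 0$. By Remark \ref{rem:positivealcovewhenhiszero}, the only weight whose shift lands in the positive alcove is $0$ itself, so for each summand $S_\nu\cE_p$ either $\nu$ is singular (the cohomology vanishes entirely) or $\nu$ is regular with ground form $\mu = 0$, in which case $\rH^{\ell(\nu)}(\cM_X(r,\cO), S_\nu\cE_p) \cong \rH^0(\cM_X(r,\cO),\cO) = \CC$ and all other cohomology vanishes. Hence the only thing to control is the length $\ell(\nu)$: I must show $\ell(\nu) \le n$ whenever $\nu$ is a regular weight arising in the decomposition. Using the explicit description of Weyl reflections at $h = 0$ recalled just before the lemma — a single reflection sends $\nu + \rho$ to $\nu + \rho - \beta$ for a positive root $\beta$ with $(\nu+\rho,\beta) \equiv 1 \bmod r$ — I would estimate the number of reflections needed to bring $\nu + \rho$ back to $\rho$. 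The key numerical input is $r > 2n$: since $\sum_j a_j \le 2n < r$ for $\nu = \sum a_j\omega_j$, the weight $\nu + \rho$ lies "close" to $\rho$, and I expect a counting argument (tracking how each reflection decreases a suitable linear functional such as $\sum_j (\text{coordinate defect})$, or bounding the number of hyperplanes $\{(\lambda,\beta)\in r\ZZ\}$ separating $\nu+\rho$ from the positive alcove) to give $\ell(\nu) \le \sum_j a_j \le n$ — or more carefully $\ell(\nu) \le n$ directly from $|\nu| = n$ type considerations after accounting for the $\det$-twist bookkeeping.

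The main obstacle is precisely this last combinatorial estimate: relating the Weyl-chamber length $\ell(\nu)$ to the size $|\nu|$ of the partition and extracting the sharp bound $\ell(\nu) \le n$ under the hypothesis $r > 2n$. The naive bound from "number of separating hyperplanes" could be as large as the number of positive roots, which is far too big; one needs to exploit that the coefficients $a_j$ of $\nu$ are small relative to $r$, so that $\nu + \rho$ only crosses a controlled set of affine walls. I would handle this by writing $\nu + \rho$ in the standard coordinates $(\ell_1 > \ell_2 > \cdots > \ell_r)$ with $\ell_i - \ell_{i+1} = a_i + 1$, observing that $\ell_1 - \ell_r = (r-1) + \sum a_j \le (r-1) + 2n$, and analyzing the (affine) Weyl orbit: each reflection in the affine symmetric group either permutes two nearly-adjacent coordinates or shifts by $r$, and the total number required to sort back to $\rho = (r-1, r-2, \ldots, 0)$ shifted is governed by the number of "inversions mod $r$", which the bound $\sum a_j \le 2n < r$ keeps at most $n$. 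A clean way to package this is to note that at $h=0$ the relevant quotient is the finite Weyl group acting on $\mathfrak h^*/(h+r)Q^\vee$, and the constraint $\sum a_j < r$ forces $\nu+\rho$ into a region where $\ell$ coincides with the ordinary (non-affine) length of the permutation needed, which is bounded by $|\nu|$.
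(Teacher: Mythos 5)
Your overall strategy coincides with the paper's: dualize $S_{\mu}\cE_{p}^{*}\cong S_{\mu^{*}}\cE_{p}$, decompose via Littlewood--Richardson into summands $S_{\nu}\cE_{p}$, and apply Teleman's theorem at $h=0$, so that everything reduces to showing $\ell(\nu)\le n$ for every regular $\nu$ that occurs. But that last estimate --- which you yourself flag as the main obstacle --- is exactly where your sketch has a genuine gap, and the ingredients you propose for it do not suffice. The bound you actually have available from your setup is $\sum_{j}a_{j}\le 2n$ (the sum for $\lambda$ plus the sum for $\mu^{*}$), and any wall-counting argument driven only by that inequality will at best give $\ell(\nu)\le 2n$, which does not prove cohomological boundedness up to degree $n$; your intermediate assertion ``$\ell(\nu)\le\sum_{j}a_{j}\le n$'' silently replaces $2n$ by $n$, and the alternative ``directly from $|\nu|=n$'' is not available either, since the partitions $\nu$ arising from $\lambda\otimes\mu^{*}$ have on the order of $tr-n+n$ boxes, not $n$.

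The missing idea is a structural constraint on $\nu$ beyond $\sum a_{j}\le 2n$: because every column of $\mu^{*}$ has height at least $r-n\ge n+1$ and the Littlewood--Richardson rule only adds the $n$ boxes of $\lambda$ to $\mu^{*}$, every $\nu$ with $c_{\lambda,\mu^{*}}^{\nu}\ne 0$ has at most $n$ boxes sitting in columns of height at most $(r-1)/2$. The paper then shows that for such $\nu$ the unique admissible affine reflection is along a root $\beta=\beta_{i}+\cdots+\beta_{j}$ with $(\nu+\rho,\beta)=r+1$ and, crucially, $i\le (r-1)/2<j$ (this uses $r>2n$ to rule out roots supported entirely in the short-column or entirely in the tall-column range); in Young-diagram terms each reflection removes one box from the short-column region and adds one to the tall-column region, so the process terminates after at most $n$ steps. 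Without this bookkeeping of where the boxes of $\nu$ live --- not just how many corners $\nu$ has --- your affine-Weyl-group/inversion count cannot be closed to give the sharp bound $\ell(\nu)\le n$.
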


\begin{proof}
Suppose that $\mu$ has \emph{columns} (not rows!) $\mu^{1} \ge \mu^{2} \ge \cdots \ge \mu^{t} > 0$. Then the dual partition $\mu^{*}$, which gives the isomorphism $S_{\mu^{*}}\cE_{p} \cong S_{\mu}\cE_{p}^{*}$ has the columns $r - \mu^{t} \ge r - \mu^{t-1} \ge \cdots \ge r - \mu^{1}$. Since $|\mu| = n$ and $r > 2n$, the length of each columns of $\mu^{*}$ is at least $n+1$. 

We may decompose the tensor product of $\cS_{\lambda}\cE_{p}$ and $\cS_{\mu}\cE_{p}^{*}$ as a direct sum of Schur functors:
\[
	S_{\lambda}\cE_{p} \otimes S_{\mu}\cE_{p}^{*} \cong \bigoplus _{\nu}c_{\lambda, \mu^{*}}^{\nu}S_{\nu}\cE_{p}.
\] 
By the Littlewood-Richardson rule \cite[p. 121, Corollary 2]{Ful97}, $c_{\lambda, \mu^{*}}^{\nu}$ is the number of skew semistandard Young tableaux of shape $\nu / \mu^{*}$ with some extra combinatorial conditions. Because $r > 2n$, $\nu$ with a nonzero $c_{\lambda, \mu^{*}}^{\nu}$ is a partition that has the following properties. It has at most $n$ boxes of `small height' (the columns including the boxes have the height at most $n \le (r-1)/2$) and at least $tr-n$ boxes of `large height' (the columns containing the boxes have the height at least $(r-1)/2 + 1 \ge n+1$). See Figure \ref{fig:Youngdiagrams}.

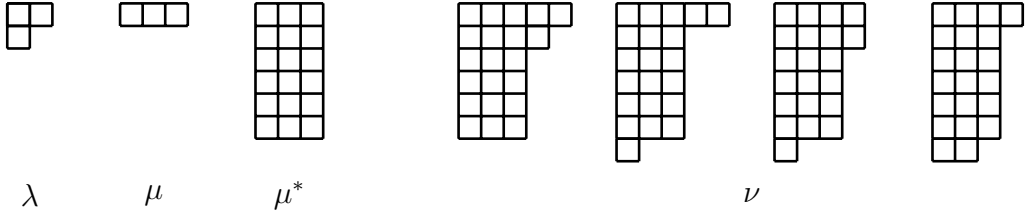
\begin{figure}[!ht]
\begin{tikzpicture}[scale=0.3]
	\draw[line width = 1pt] (-3, 0) grid (-1, 1);
	\draw[line width = 1pt] (-3, -1) grid (-2, 1);

	\draw[line width = 1pt] (2, 0) grid (5, 1);
	
	\draw[line width = 1pt] (8, 1) grid (11, -5);

	\node at (-2, -7.5) {$\lambda$};
	\node at (3.5, -7.5) {$\mu$};
	\node at (9.5, -7.5) {$\mu^{*}$};

	\draw[line width = 1pt] (17, 1) grid (20, -5);
	\draw[line width = 1pt] (20, 1) grid (22, 0);
	\draw[line width = 1pt] (20, 0) grid (21, -1);
	
	\draw[line width = 1pt] (24, 1) grid (27, -5);
	\draw[line width = 1pt] (27, 1) grid (29, 0);
	\draw[line width = 1pt] (24, -5) grid (25, -6);

	\draw[line width = 1pt] (31, 1) grid (34, -5);
	\draw[line width = 1pt] (34, 1) grid (35, -1);
	\draw[line width = 1pt] (31, -5) grid (32, -6);

	\draw[line width = 1pt] (38, 1) grid (41, -5);
	\draw[line width = 1pt] (41, 1) grid (42, 0);
	\draw[line width = 1pt] (38, -5) grid (40, -6);
	\node at (30, -7.5) {$\nu$};
\end{tikzpicture}
\caption{List of four partitions $\nu$ with nonzero $c_{\lambda, \mu^{*}}^{\nu}$ for $r = 7$, $n = 3$, $\lambda = (2 \ge 1)$, $\mu = (3)$.}\label{fig:Youngdiagrams}
\end{figure}

Let $\nu = \sum a_{j}\omega_{j}$. If $\nu$ is singular, then by Theorem \ref{thm:BWB}, $S_{\nu}\cE_{p}$ is cohomologically trivial. So, suppose that $\nu$ is regular. Then there is a positive root $\beta = \beta_{i}+ \beta_{i+1}+\cdots +\beta_{j}$ such that $(\nu+\rho, \beta) \equiv 1 \; \mathrm{mod}\; r$. Because $(\nu + \rho, \beta) = \sum_{k= i}^{j}(a_{k}+1) \le 2n + r-1 < 2r - 1$, $(\nu + \rho, \beta) = 1$ or $r+1$. The first case only occurs if $\beta = \beta_{i}$ and $a_{i} = 0$. In this case, the $i$-th coefficient of $\nu + \rho - \beta$ is $-1$, so it is out of the region of dominant weights. Thus, we do not consider the reflection. Hence, we may assume that there is a root $\beta = \beta_{i} + \beta_{i+1} + \cdots + \beta_{j}$ with $i < j$ such that $(\nu + \rho, \beta) = r + 1$. 

We claim that $i \le (r-1)/2$ and $j \ge (r-1)/2+1$. If not, $j \le (r-1)/2$ or $i \ge (r-1)/2+1$. In the former case, because $(\nu + \rho, \beta) \le \sum_{k=i}^{(r-1)/2}(a_{k} + 1) \le n + (r-1)/2$. But $n + (r-1)/2 < (2r-1)/2 < r+1 = (\nu + \rho, \beta)$, hence this is impossible. We may exclude the $i \ge (r-1)/2+1$ case similarly. 

Take the Weyl reflection associated with $\beta = \beta_{i} + \cdots + \beta_{j}$. Since $i \le (r-1)/2$ and $j \ge (r-1)/2 + 1$, in terms of the associated Young diagram, this corresponds to 
\begin{enumerate}
\item Removing a box from one row in the small height part;
\item Adding a box to one row in the large height part;
\item Eliminating any column with length $r$ (if it exists). 
\end{enumerate}
Since there are at most $n$ boxes of small height on $\nu$, this procedure terminates at most $n$ steps and reaches $0 + \rho$. Therefore, the length $\ell(\nu)$ is at most $n$. By Theorem \ref{thm:BWB}, with $h = 0$, $\rH^{i}(\cM_{X}(r, \cO), S_{\nu}\cE_{p}) = 0$ for $i > n$. 

Therefore, all irreducible factors $S_{\nu}\cE_{p}$ are cohomologically bounded up to degree $n$, and the same is true for $S_{\lambda}\cE_{p} \otimes S_{\mu}\cE_{p}^{*}$. 
\end{proof}

\begin{lemma}\label{lem:cohtrivial}
Suppose that $r > 2n$. For a partition $\lambda \vdash n$, if $|\lambda| = n$, then $\lambda$ and $\lambda^{*}$ are singular. Thus $S_{\lambda}\cE_{p}$ and $S_{\lambda^{*}}\cE_{p}$ are cohomologically trivial.
\end{lemma}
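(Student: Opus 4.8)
The idea is to reduce both claims to exhibiting, for $\lambda$ and for $\lambda^{*}$, a single positive root of $\mathfrak{sl}_r$ on which the pairing with $\lambda+\rho$ (resp.\ $\lambda^{*}+\rho$) is divisible by $r$, and then to apply Theorem~\ref{thm:BWB} with $h=0$. Write $\lambda=(\lambda_1\ge\lambda_2\ge\cdots\ge\lambda_{r-1}\ge 0)$, set $\lambda_r:=0$, so that $a_j=\lambda_j-\lambda_{j+1}$ and $\lambda+\rho=\sum(a_j+1)\omega_j$. As recalled in Section~\ref{ssec:cohomologyonstack}, for the positive root $\beta_i+\beta_{i+1}+\cdots+\beta_j$ with $1\le i\le j\le r-1$ one has $(\lambda+\rho,\beta_i+\cdots+\beta_j)=\sum_{k=i}^{j}(a_k+1)=\lambda_i-\lambda_{j+1}+(j+1-i)$, and $\lambda$ is singular with respect to $h+r=r$ exactly when this number is a multiple of $r$ for some $i\le j$. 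Since $\lambda$ is a partition of $n\ge 1$, it has at most $n$ nonzero parts; hence $\lambda_k=0$ for $k>n$ and $1\le\lambda_1\le n$.

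First I treat $\lambda$. Take $i=1$ and $j=r-\lambda_1$. Because $1\le\lambda_1\le n$ and $r>2n$, we have $1\le j\le r-1$ and moreover $j+1=r-\lambda_1+1>r-n>n$, so $\lambda_{j+1}=0$; therefore $(\lambda+\rho,\beta_1+\cdots+\beta_{r-\lambda_1})=\lambda_1+(r-\lambda_1)=r$, and $\lambda$ is singular. Next I treat $\lambda^{*}$. Recall from the proof of Lemma~\ref{lem:boundedness} that $\lambda^{*}$ is the complement of $\lambda$ inside the $r\times\lambda_1$ rectangle; reading this off by rows, $(\lambda^{*})_k=\lambda_1-\lambda_{r+1-k}$, so if $p\le n$ is the number of nonzero parts of $\lambda$ then $(\lambda^{*})_k=\lambda_1$ for every $k\le r-p$ and $(\lambda^{*})_r=0$. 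Taking $i=\lambda_1$ and $j=r-1$ --- both legitimate since $1\le\lambda_1\le n<r-1$, and $\lambda_1\le r-p$ because $\lambda_1+p\le 2n<r$ --- we get $(\lambda^{*}+\rho,\beta_{\lambda_1}+\cdots+\beta_{r-1})=(\lambda^{*})_{\lambda_1}-(\lambda^{*})_r+(r-\lambda_1)=\lambda_1+(r-\lambda_1)=r$, so $\lambda^{*}$ is singular too. (One can also see this more conceptually: the weight datum of $\lambda^{*}$ is obtained from that of $\lambda$ by an affine reflection, so the residues modulo $r$ that enter the singularity test for one of them coincide for two distinct indices if and only if those for the other do.)

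Finally, feeding the singular weights $\lambda$ and $\lambda^{*}$ into Theorem~\ref{thm:BWB} with $h=0$ and a single parabolic point gives $\rH^{*}(\cM_X(r,\cO),S_{\lambda}\cE_p)=\rH^{*}(\cM_X(r,\cO),S_{\lambda^{*}}\cE_p)=0$, i.e.\ both bundles are cohomologically trivial, which is the assertion.

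I expect the only place that needs care to be the bookkeeping for $\lambda^{*}$: it is a large partition (of size $\lambda_1 r-n$), so one must correctly locate the parts $(\lambda^{*})_{\lambda_1}$ and $(\lambda^{*})_r$ and, crucially, check that the proposed root $\beta_{\lambda_1}+\cdots+\beta_{r-1}$ really is a positive root of $\mathfrak{sl}_r$ --- and it is precisely the inequality $\lambda_1+p\le 2n<r$, a consequence of $r>2n$, that makes this work. Everything else is a direct substitution into the pairing formula.
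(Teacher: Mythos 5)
Your proof is correct and follows essentially the same route as the paper: both exhibit the positive root $\beta_{1}+\cdots+\beta_{r-\lambda_{1}}$ (the paper writes $a=\sum a_{j}=\lambda_{1}$) on which $\lambda+\rho$ pairs to exactly $r$, and both handle $\lambda^{*}$ via the symmetry $\beta_{j}\leftrightarrow\beta_{r-j}$ --- you merely spell out that symmetry as an explicit computation with the root $\beta_{\lambda_{1}}+\cdots+\beta_{r-1}$. The bookkeeping you flag (that $\lambda_{r-\lambda_{1}+1}=0$ and $\lambda_{1}+p\le 2n<r$) is exactly where the hypothesis $r>2n$ enters in the paper as well, so nothing further is needed.
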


\begin{proof}
Set $\lambda = \sum a_{j}\omega_{j}$. Then $\lambda + \rho = \sum (a_{j}+1)\omega_{j}$. Since $|\lambda| = \sum ja_{j} = n$, $a := \sum a_{j} \le n$ and $a_{j} = 0$ for $j > r - a > n$. Now if we take $\beta := \beta_{1} + \cdots + \beta_{r-a}$, 
\[
	(\lambda+\rho, \beta) = \sum_{j=1}^{r-a}(a_{j}+1) = \sum_{j=1}^{r-1}(a_{j}+1) - (a-1) = a + r-1 - (a-1) = r.
\]
Therefore $\lambda$ is singular. The singularity of $\lambda^{*}$ follows from the symmetry $\beta_{j} \leftrightarrow \beta_{r-j}$. The second statement follows from Theorem \ref{thm:BWB}.
\end{proof}

\subsection{Cohomological boundedness and triviality on the moduli space}\label{ssec:cohomologyonmodulispace}

We are ready to prove the desired cohomological boundedness and the triviality over $\rM_{X}(r, L)$. In this section, we assume that $(r, d) = 1$ where $d = \det L$. 

The following lemma tells us that for `balanced' tensor products of Schur functors, the universal bundle gives the same bundle with the pull-back of a Poincar\'e bundle.

\begin{lemma}\label{lem:Poincareisuniversal}
Let $\cE$ be the universal bundle over $X \times \cM_{X}(r, L)$ and let $\sigma^{*}\cE$ be a Poincar\'e bundle over $X \times \rM_{X}(r, L)$ for a section $\sigma : \rM_{X}(r, L) \to \cM_{X}(r, L)^{s}$.  For two sequences of points $p_{1}, p_{2}, \cdots, p_{k} \in X$, $q_{1}, q_{2}, \cdots, q_{\ell} \in X$, and two sequences of partitions $\lambda^{1}, \lambda^{2}, \cdots, \lambda^{k}$ and $\mu^{1}, \mu^{2}, \cdots, \mu^{\ell}$ such that $\sum |\lambda^{i}| = \sum |\mu^{j}|$, there is an isomorphism 
\[
	\bigotimes S_{\lambda_{i}}\cE_{p_{i}} \otimes \bigotimes S_{\mu_{j}}\cE_{q_{j}}^{*} \cong 
	p^{*}\left(\bigotimes S_{\lambda_{i}}\sigma^{*}\cE_{p_{i}} \otimes \bigotimes S_{\mu_{j}}\sigma^{*}\cE_{q_{j}}^{*}\right).
\]
\end{lemma}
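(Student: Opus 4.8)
The plan is to compare the two bundles by relating the universal bundle $\cE$ over the moduli stack to the Poincar\'e bundle $\sigma^{*}\cE$ on the coarse moduli space via the good moduli space morphism $p : \cM_{X}(r, L)^{s} \to \rM_{X}(r, L)$. Recall that $\cM_{X}(r, L)^{s}$ is a $\mathbb{G}_m$-gerbe over $\rM_{X}(r, L)$, and the universal bundle $\cE$ and the pull-back $p^{*}\sigma^{*}\cE$ differ precisely by the action of this gerbe: since $\sigma$ is a section, $\sigma^{*}\cE$ is a genuine bundle on $\rM_{X}(r,L)$, and there is an isomorphism $\cE|_{X \times \cM_{X}(r,L)^s} \cong (p^{*}\sigma^{*}\cE) \otimes \mathcal{L}$, where $\mathcal{L}$ is the line bundle (pulled back from $\cM_{X}(r, L)^{s}$) on which $\mathbb{G}_m$ acts with weight one. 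Equivalently, at a point $[E] \in \rM_{X}(r,L)$ the fiber $\cE_{p_i}$ over the stack and the fiber $\sigma^{*}\cE_{p_i}$ over the coarse space agree after a one-dimensional twist that records the scalar ambiguity in the choice of $E$ as a point of the stack.

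First I would make this twist explicit: fix the presentation of $\cM_X(r,L)^s$ from the GIT construction (Section \ref{ssec:GITconstruction}) as $[\bR(m)/\GL_{\chi_m}]$ or the $\SL$-version, so that the center $\mathbb{G}_m \subset \GL_{\chi_m}$ acts on $\cE_{p_i}$ by weight one, and note $\mathcal{L}$ is the character bundle for this weight-one character. Then I would compute the $\mathbb{G}_m$-weight of both sides of the claimed isomorphism. A Schur functor $S_{\lambda_i}$ applied to a weight-one bundle of rank $r$ produces a bundle on which the center acts with weight $|\lambda_i|$; dually, $S_{\mu_j}\cE_{q_j}^{*}$ carries weight $-|\mu_j|$. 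Hence the tensor product $\bigotimes S_{\lambda_i}\cE_{p_i} \otimes \bigotimes S_{\mu_j}\cE_{q_j}^{*}$ carries central weight $\sum|\lambda_i| - \sum|\mu_j| = 0$ by hypothesis, so it descends to the coarse space; and what it descends to is exactly $\bigotimes S_{\lambda_i}\sigma^{*}\cE_{p_i} \otimes \bigotimes S_{\mu_j}\sigma^{*}\cE_{q_j}^{*}$ because, away from the gerbe twist, $\cE$ and $p^{*}\sigma^{*}\cE$ agree. Concretely: $\bigotimes S_{\lambda_i}\cE_{p_i} \otimes \bigotimes S_{\mu_j}\cE_{q_j}^{*} \cong p^{*}\!\big(\bigotimes S_{\lambda_i}\sigma^{*}\cE_{p_i} \otimes \bigotimes S_{\mu_j}\sigma^{*}\cE_{q_j}^{*}\big) \otimes \mathcal{L}^{\otimes(\sum|\lambda_i| - \sum|\mu_j|)}$, and the exponent vanishes.

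The main obstacle, and the only genuinely delicate point, is justifying that the twisting line bundle $\mathcal{L}$ for which $\cE \cong (p^{*}\sigma^{*}\cE)\otimes \mathcal{L}$ is itself pulled back from $\cM_X(r,L)^s$ (so that it is killed by taking central-weight-zero combinations) rather than depending on the point $p_i \in X$, and that Schur functors interact with this twist as stated — i.e. $S_{\lambda}(\mathcal{F} \otimes \mathcal{L}) \cong S_{\lambda}(\mathcal{F}) \otimes \mathcal{L}^{\otimes |\lambda|}$ for a line bundle $\mathcal{L}$, which is the standard identity for Schur functors under a line-bundle twist. Both are routine once the gerbe picture is set up, but the bookkeeping of where $\mathcal{L}$ lives (on the stack versus on $X \times \cM$) must be done carefully; I would phrase it using the identity ${\sigma'}^{*}\cE = (\sigma^{*}\cE)\otimes p_2^{*}A$ already recorded in the Remark after Definition \ref{def:FMfunctor}, applied to the tautological comparison between $\cE$ and $p^{*}\sigma^{*}\cE$, to keep the argument self-contained.
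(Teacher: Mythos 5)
Your proposal is correct and follows essentially the same route as the paper: both identify $\cE$ and $p^{*}\sigma^{*}\cE$ as differing by a line bundle twist coming from the $\CC^{*}$-gerbe structure of $\cM_{X}(r,L)^{s}$ over $\rM_{X}(r,L)$, apply the identity $S_{\lambda}(\cF\otimes F)\cong S_{\lambda}(\cF)\otimes F^{|\lambda|}$, and observe that the total power $\sum|\lambda_{i}|-\sum|\mu_{j}|$ vanishes by hypothesis. The point you flag as delicate --- that the twisting line bundle is pulled back from the stack and independent of the points $p_{i}$ --- is exactly what the paper asserts when it says the two bundles differ by ``a line bundle $F$ on the stack,'' so your extra bookkeeping only makes explicit what the paper leaves implicit.
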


\begin{proof}
For the good moduli map $p : \cM_{X}(r, L)^{s} \to \rM_{X}(r, L)$, since $\cM_{X}(r, L)^{s}$ is a $\CC^{*}$-gerbe over $\rM_{X}(r, L)$, the pull-back of a universal bundle $p^{*}\sigma^{*}\cE$ differs from $\cE$ by a tensor product of a line bundle $F$ on the stack. Because the Schur functor construction is functorial, 
\[
\begin{split}
	p^{*}\left(\bigotimes S_{\lambda_{i}}\sigma^{*}\cE_{p_{i}} \otimes \bigotimes S_{\mu_{j}}\sigma^{*}\cE_{q_{j}}^{*}\right)
	&\cong \bigotimes S_{\lambda_{i}}p^{*}\sigma^{*}\cE_{p_{i}} \otimes \bigotimes S_{\mu_{j}}p^{*}\sigma^{*}\cE_{q_{j}}^{*}\\
	&\cong \bigotimes S_{\lambda_{i}}\left(\cE_{p_{i}}\otimes F\right) \otimes \bigotimes S_{\mu_{j}}\sigma^{*}\left(\cE_{q_{j}} \otimes F\right)^{*}\\
	&\cong \bigotimes \left(S_{\lambda_{i}}\cE_{p_{i}}\right) \otimes F^{|\lambda_{i}|} \otimes \bigotimes \left(S_{\mu_{j}}\cE_{q_{j}}^{*}\right) \otimes F^{-|\mu_{j}|}\\
	&\cong \bigotimes S_{\lambda_{i}}\cE_{p_{i}} \otimes \bigotimes S_{\mu_{j}}\cE_{q_{j}}^{*} \otimes F^{\sum |\lambda_{i}| - \sum |\mu_{j}|}
\end{split}
\]
and the power of $F$ is zero by the assumption.
\end{proof}

We first show the cohomological boundedness.

\begin{proposition}\label{prop:reductionboundedness}
Suppose $r > 2n$ and let $\bp \in X_{n}$. Then $\rH^{i}(\rM_{X}(r, \cO), \cF_{\bp} \otimes \cF_{\bp}^{*}) = 0$ for $i > n$. 
\end{proposition}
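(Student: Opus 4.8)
The plan is to reduce the cohomology computation on the moduli space $\rM_{X}(r, \cO)$ to a computation on the moduli stack $\cM_{X}(r, \cO)$, where the Borel-Weil-Bott-Teleman theory (Theorem \ref{thm:BWB}) and the combinatorial Lemma \ref{lem:boundedness} apply directly. First I would use Lemma \ref{lem:FMkernel} to replace $\cF_{\bp}$ by the bundle $\bigotimes_{i} q_{i}^{*}\sigma^{*}\cE_{p_{i}}$: since $\cF_{\bp}$ is a deformation of this bundle over $\AA^{1}\times \rM_{X}(r,L)$, by semicontinuity the cohomology groups $\rH^{i}(\rM_{X}(r,\cO), \cF_{\bp}\otimes \cF_{\bp}^{*})$ are bounded above (dimension-wise) by those of $\rH^{i}(\rM_{X}(r,\cO), \bigotimes_{i}\sigma^{*}\cE_{p_{i}} \otimes \bigotimes_{i}\sigma^{*}\cE_{p_{i}}^{*})$; so it suffices to prove the latter vanishes for $i > n$. (One must be careful that the $p_{i}$ are distinct, which is the generic situation in $X_{n}$, and handle general $\bp$ by a further specialization/semicontinuity argument, or simply note that boundedness of the generic fiber's cohomology suffices since $\bp$ can be taken generic — the proposition is stated for arbitrary $\bp \in X_{n}$, so the semicontinuity direction must be arranged to flow the right way.)

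Next I would pass from $\rM_{X}(r,\cO)$ to the stack. By Lemma \ref{lem:Poincareisuniversal}, since the tensor product $\bigotimes S_{\lambda_{i}}\sigma^{*}\cE_{p_{i}} \otimes \bigotimes S_{\mu_{j}}\sigma^{*}\cE_{q_{j}}^{*}$ is ``balanced'' (here all $\lambda_{i}=\mu_{j}=(1)$, so $\sum|\lambda_{i}| = n = \sum|\mu_{j}|$), the bundle $\bigotimes \cE_{p_{i}}\otimes\bigotimes\cE_{p_{i}}^{*}$ on the stack $\cM_{X}(r,\cO)^{s}$ is the pull-back $p^{*}$ of the corresponding bundle on the moduli space. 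Since $p$ is a good moduli map (a $\CC^{*}$-gerbe) and the bundle has weight zero, $\rH^{i}(\rM_{X}(r,\cO), \text{bundle}) \cong \rH^{i}(\cM_{X}(r,\cO)^{s}, p^{*}\text{bundle})$. Then, using the numerical hypothesis $r > 2n$ to invoke Corollary \ref{cor:unstablelociarenegligible} (the relevant partitions $\nu$ appearing have $\sum a_{j} = O(n) \le (r-1)(g-1)+1$ — this needs to be checked against the bound, using $g \ge 2$ and $r > 2n$), I identify this with cohomology over the full moduli stack $\cM_{X}(r,\cO)$: the unstable locus contributes nothing.

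Finally, on $\cM_{X}(r,\cO)$ I decompose $\cE_{p} \otimes \cE_{p}^{*}$ into Schur functors and apply Lemma \ref{lem:boundedness} with $\lambda = \mu = (1,\dots,1)$ or more precisely decompose the full $n$-fold tensor $\bigotimes \cE_{p_{i}}\otimes\bigotimes\cE_{p_{i}}^{*}$, grouping factors at the same point (all $p_{i}$ distinct here, so each point carries $\cE_{p_{i}}\otimes\cE_{p_{i}}^{*}$). The point is that for each point $p_{i}$ the local contribution decomposes into Schur functors $S_{\nu_{i}}\cE_{p_{i}}$ with $|\nu_{i}|$ small, and Teleman's theorem bounds the cohomological degree of each $S_{\nu_{i}}\cE_{p_{i}}$ by $\ell(\nu_{i}) \le |\nu_{i}|$ when $r > 2|\nu_{i}|$, via the Young-diagram manipulation in the proof of Lemma \ref{lem:boundedness} (remove a box from the small-height part, add to the large-height part, delete length-$r$ columns). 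Summing over the $k$ points, the total cohomological degree is at most $\sum_i |\nu_i|$, which must be controlled by $n$. The main obstacle I anticipate is exactly this bookkeeping step: ensuring that the combinatorial bound from Lemma \ref{lem:boundedness}, which is stated for a single point, aggregates correctly over several points to give the bound $n$ (not merely $kn$ or similar), and that the hypothesis $r > 2n$ is genuinely strong enough to control \emph{all} the Littlewood-Richardson constituents $\nu$ simultaneously. Once the reduction to the stack is in place, this is essentially a matter of carefully tracking box counts, but it is where the argument could go wrong if the combinatorics are not set up cleanly.
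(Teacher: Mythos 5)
Your overall architecture --- deformation via Lemma \ref{lem:FMkernel}, passage between $\rM_{X}(r,\cO)$ and the moduli stacks via Lemma \ref{lem:Poincareisuniversal}, \eqref{eqn:cohomologycomparison2}, \eqref{eqn:pullbackoflambda} and Corollary \ref{cor:unstablelociarenegligible}, then Schur decomposition and Lemma \ref{lem:boundedness} --- is the same as the paper's. But there is a genuine gap exactly where you flag it: you set the computation up at a configuration of \emph{distinct} points and defer arbitrary $\bp\in X_{n}$ to ``a further specialization/semicontinuity argument.'' That argument cannot be arranged to flow the way you need. Upper semicontinuity makes the vanishing of $\rH^{i}$ an \emph{open} condition on $\bp$, so vanishing at generic $\bp$ says nothing about non-reduced configurations, which is precisely where the statement is hardest (and ``$\bp$ can be taken generic'' is not available, since the proposition is quantified over all $\bp$). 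The paper runs the argument in the opposite direction: it first treats the \emph{most degenerate} point $\bp=(np)$, where $\cF_{\bp}$ deforms to $(\sigma^{*}\cE_{p})^{\otimes n}$ and Schur--Weyl duality decomposes $(\sigma^{*}\cE_{p})^{\otimes n}\otimes(\sigma^{*}\cE_{p}^{*})^{\otimes n}$ into summands $S_{\lambda}\cE_{p}\otimes S_{\mu}\cE_{p}^{*}$ with $|\lambda|=|\mu|=n$, exactly the input of Lemma \ref{lem:boundedness}. Since the small diagonal $X_{(n)}$ lies in the closure of every stratum of $X_{n}$, semicontinuity then propagates the vanishing to an open neighborhood meeting every stratum, and the fact that Theorem \ref{thm:BWB} depends only on the partition type (not on the actual points) upgrades this to all of $X_{n}$. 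Your plan is missing both the Schur--Weyl step at a single point and this outward-flowing use of semicontinuity.

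A repair within your framework is possible: for each fixed $\bp=\sum\lambda_{j}p_{j}$ apply Lemma \ref{lem:FMkernel} directly, then Schur--Weyl duality at each $p_{j}$ separately; the constituents are $\bigotimes_{j}S_{\nu_{j}}\cE_{p_{j}}$ with $\ell(\nu_{j})\le\lambda_{j}$ by the argument of Lemma \ref{lem:boundedness} (valid since $r>2n\ge 2\lambda_{j}$), and Theorem \ref{thm:BWB} places the nonzero cohomology in degree $\sum_{j}\ell(\nu_{j})\le\sum_{j}\lambda_{j}=n$. This also settles your bookkeeping worry: the degrees aggregate additively, not multiplicatively, so the bound is $n$ and not $kn$. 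Your check on Corollary \ref{cor:unstablelociarenegligible} is fine as stated: the constituents $\nu=\sum a_{j}\omega_{j}$ satisfy $\sum a_{j}\le 2n<r\le(r-1)(g-1)+1$ for $g\ge 2$.
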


\begin{proof}
We first consider the case that $\bp = (np)$. Since $\cF_{\bp}$ is a deformation of $(\sigma^{*}\cE_{p})^{\otimes n}$ (Lemma \ref{lem:FMkernel}), by the semicontinuity, it is sufficient to show that $\rH^{i}(\rM_{X}(r, \cO), (\sigma^{*}\cE_{p})^{\otimes n} \otimes (\sigma^{*}\cE_{p}^{*})^{\otimes n}) = 0$ for $i > n$. By the Schur-Weyl duality, there is an isomorphism 
\[
	(\sigma^{*}\cE_{p})^{\otimes n} \cong \bigoplus_{\lambda \vdash n}V_{\lambda} \otimes S_{\lambda}(\sigma^{*}\cE_{p}),
\]
where $V_{\lambda}$ is an irreducible $S_{n}$-representation associated to $\lambda$. Then 
\[
\begin{split}
	(\sigma^{*}\cE_{p})^{\otimes n} \otimes (\sigma^{*}\cE_{p}^{*})^{\otimes n} &\cong \bigoplus_{\lambda \vdash n}V_{\lambda} \otimes S_{\lambda}(\sigma^{*}\cE_{p}) \otimes \bigoplus_{\lambda \vdash n}V_{\lambda} \otimes S_{\lambda}(\sigma^{*}\cE_{p}^{*}) \\
	&\cong \bigoplus_{\lambda, \mu \vdash n} m_{\lambda, \mu}S_{\lambda}(\sigma^{*}\cE_{p}) \otimes S_{\mu}(\sigma^{*}\cE_{p}^{*}), 
\end{split}
\]
for some integers $m_{\lambda, \mu}$. Since $p^{*} : \rD^{b}(\rM_{X}(r, L)) \to \rD^{b}(\cM_{X}(r, L)^{s})$ is fully-faithful \cite[Lemma 6.1]{LM23}, the cohomology of $S_{\lambda}(\sigma^{*}\cE_{p}) \otimes S_{\mu}(\sigma^{*}\cE_{p}^{*})$ can be computed after pulling back to $\cM_{X}(r, L)^{s}$. By Lemma \ref{lem:Poincareisuniversal}, this is isomorphic to $S_{\lambda}\cE_{p} \otimes S_{\mu}\cE_{p}^{*}$. 

Littlewood-Richardson rule implies
\[
	S_{\lambda}\cE_{p} \otimes S_{\mu}\cE_{p}^{*} \cong \bigoplus _{\nu}c_{\lambda, \mu^{*}}^{\nu}S_{\nu}\cE_{p}.
\] 
So, we may reduce the statement to the cohomological boundedness of $S_{\nu}\cE_{p}$. Then, by the cohomology identifications in previous sections, 
\begin{equation}\label{eqn:reductiontotrivialdet}
\begin{split}
	\rH^{*}(\cM_{X}(r, L)^{s}, S_{\nu}\cE_{p}) & \stackrel{\eqref{eqn:cohomologycomparison2}}{\cong} 
	\rH^{*}(\cM_{X, (p)}(r, L, \ba), L_{p, \nu}) 
	\stackrel{\eqref{eqn:pullbackoflambda}}{\cong} 
	\rH^{*}(\cM_{X, (p, p')}(r, \cO, \ba'), L_{p, \nu}) \\
	& \stackrel{\eqref{eqn:unstablelociarenegligible}}{\cong}
	\rH^{*}(\cM_{X, (p, p')}(r, \cO), L_{p, \nu})
	\stackrel{\eqref{eqn:cohomologycomparison}}{\cong} 
	\rH^{*}(\cM_{X}(r, \cO), S_{\nu}\cE_{p}).
\end{split}
\end{equation}
In other words, we may reduce the cohomology computation of $S_{\lambda}\cE_{p} \otimes S_{\mu}\cE_{p}^{*}$ over $\cM_{X}(r, \cO)$. Finally, by Lemma \ref{lem:boundedness}, over $\cM_{X}(r, \cO)$, $S_{\nu}\cE_{p}$ is cohomologically bounded up to degree $n$.

Now, we prove the general case. The symmetric product $X_{n} \cong \Hilb^{n}(X)$ is naturally stratified by the multiplicities of point configuration. For a partition $\lambda = (\lambda_{1} \ge \lambda_{2} \ge \cdots \ge \lambda_{k}>0)$ of $n$, we set 
\[
	X_{\lambda} = \{ \lambda_{1}p_{1} + \lambda_{2}p_{2} + \cdots + \lambda_{k}p_{k} \in \Hilb^{n}(X)\;|\; p_{i}\ne p_{j} \mbox{ if } i \ne j\}.
\]
Then $X_{n} = \sqcup_{\lambda \vdash n}X_{\lambda}$. By the semicontinuity of cohomology, the vanishing of cohomology is true for general points of arbitrary strata $X_{\lambda} \subset X_{n}$, as $X_{(n)}$ is the unique closed stratum and the closure of any other stratum intersect $X_{(n)}$. Theorem \ref{thm:BWB} does not depend on the actual point configuration. Therefore, if the cohomology vanishing is true for one point of $X_{\lambda}$, then it is true for every point on $X_{\lambda}$. Thus, the desired statement is true for the whole $X_{n}$.
\end{proof}

Along the same line of the proof, we can show the cohomological triviality as well.

\begin{proposition}\label{prop:triviality}
Suppose that $r > 2n$ and $\bp \ne \bq \in X_{n}$. Then $\cF_{\bp} \otimes \cF_{\bq}^{*}$ is cohomologically trivial.
\end{proposition}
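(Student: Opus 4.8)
The plan is to mirror the proof of Proposition \ref{prop:reductionboundedness}, using the cohomological triviality of Lemma \ref{lem:cohtrivial} in place of the boundedness of Lemma \ref{lem:boundedness}; the key simplification is that, since $\bp \ne \bq$, after degenerating we are left with Schur functors of the universal bundle spread over \emph{two distinct} points, so the presence of a single singular partition already annihilates all cohomology by Teleman's theorem. First I would invoke Lemma \ref{lem:FMkernel}: $\cF_{\bp}$ deforms over $\AA^{1}$ to $\bigotimes_{i}\sigma^{*}\cE_{p_{i}}$ and $\cF_{\bq}$ to $\bigotimes_{j}\sigma^{*}\cE_{q_{j}}$, where the $p_{i}$ (resp. $q_{j}$) are the points of $\bp$ (resp. $\bq$) with multiplicity. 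Taking the product of the two one-parameter families and applying upper semicontinuity of cohomology reduces the vanishing to that of the split bundle $\bigotimes_{x}(\sigma^{*}\cE_{x})^{\otimes m_{x}}\otimes(\sigma^{*}\cE_{x}^{*})^{\otimes n_{x}}$, where $x$ runs over the points of $\Sp\,\bp \cup \Sp\,\bq$ and $m_{x}, n_{x}$ are the multiplicities. Exactly as in the last paragraph of the proof of Proposition \ref{prop:reductionboundedness}, the cohomology of this split bundle is computed through Theorem \ref{thm:BWB} and hence depends only on the collision type, not on the positions of the points; running the stratification argument on $\{(\bp,\bq)\in X_{n}\times X_{n}\mid \bp\ne\bq\}$, whose deepest stratum is $\{(np,nq)\mid p\ne q\}$, reduces the statement to the cohomological triviality of $(\sigma^{*}\cE_{p})^{\otimes n}\otimes(\sigma^{*}\cE_{q}^{*})^{\otimes n}$ for two distinct points $p\ne q\in X$.

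For this, by Schur--Weyl duality the bundle decomposes as a direct sum, over $\lambda,\mu\vdash n$, of summands $S_{\lambda}(\sigma^{*}\cE_{p})\otimes S_{\mu}(\sigma^{*}\cE_{q}^{*})$ with vector-space multiplicities. Since $p^{*}$ is fully faithful and by Lemma \ref{lem:Poincareisuniversal}, the cohomology of each summand equals that of $S_{\lambda}\cE_{p}\otimes S_{\mu^{*}}\cE_{q}$ on $\cM_{X}(r,L)^{s}$, and running the comparison chain \eqref{eqn:reductiontotrivialdet} with parabolic points $(p,q,p')$ --- which needs only the evident extension of Corollary \ref{cor:unstablelociarenegligible} to the product line bundle $L_{p,\lambda}\otimes L_{q,\mu^{*}}$ (available because the total weight $\lambda_{1}+(\mu^{*})_{1}\le 2n<r\le(r-1)(g-1)+1$) and the product version of \eqref{eqn:cohomologycomparison2} and \eqref{eqn:cohomologycomparison} --- identifies this with $\rH^{*}(\cM_{X}(r,\cO), S_{\lambda}\cE_{p}\otimes S_{\mu^{*}}\cE_{q})$. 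Now $|\lambda|=n$ and $r>2n$, so Lemma \ref{lem:cohtrivial} shows $\lambda$ is singular, and Theorem \ref{thm:BWB} with $h=0$ then gives $\rH^{*}(\cM_{X}(r,\cO), S_{\lambda}\cE_{p}\otimes S_{\mu^{*}}\cE_{q})=0$. Summing over $\lambda,\mu$ yields the triviality of $(\sigma^{*}\cE_{p})^{\otimes n}\otimes(\sigma^{*}\cE_{q}^{*})^{\otimes n}$, hence of $\cF_{\bp}\otimes\cF_{\bq}^{*}$.

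The place I expect to need the most care is the first-step reduction, namely the claim that the deepest stratum $\{(np,nq)\mid p\ne q\}$ lies in the closure of every stratum of $\{\bp\ne\bq\}$, so that semicontinuity together with the point-independence of Theorem \ref{thm:BWB} propagates the vanishing outward: one has to move all points of $\bp$ together to a single point $p$ and all points of $\bq$ together to a single point $q\ne p$ along a path that never makes $\bp$ equal to $\bq$, which is possible since the diagonal has codimension $n\ge 1$ in $X_{n}\times X_{n}$. If one prefers to bypass this, there is a purely combinatorial alternative: transfer $\bigotimes_{x}(\sigma^{*}\cE_{x})^{\otimes m_{x}}\otimes(\sigma^{*}\cE_{x}^{*})^{\otimes n_{x}}$ directly to $\cM_{X}(r,\cO)$ and decompose via the Littlewood--Richardson rule; at a point $x_{0}$ with $m_{x_{0}}\ne n_{x_{0}}$, every Schur summand $S_{\nu}\cE_{x_{0}}$ occurring satisfies $|\nu|\equiv m_{x_{0}}-n_{x_{0}}\not\equiv 0\pmod{r}$, and after deleting columns of height $r$ a pigeonhole count on the partial sums of the coordinates of $\nu+\rho$ (there are $r$ of them, but their sum forces a repetition mod $r$ unless $r\mid|\nu|$) shows $\nu$ is singular, so Theorem \ref{thm:BWB} kills every term regardless of the other factors. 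Either way, the only substantive new input beyond Proposition \ref{prop:reductionboundedness} is that $\bp\ne\bq$ lets us feed a singular partition into Teleman's vanishing, which is far cheaper than the boundedness combinatorics of Lemma \ref{lem:boundedness}.
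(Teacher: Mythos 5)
Your proposal follows the same skeleton as the paper's proof: deform $\cF_{\bp}\otimes\cF_{\bq}^{*}$ to a split product of copies of $\sigma^{*}\cE$ and its dual (Lemma \ref{lem:FMkernel}), decompose by Schur--Weyl, transfer to $\cM_{X}(r,\cO)$ via the chain \eqref{eqn:reductiontotrivialdet}, and invoke the singularity of Lemma \ref{lem:cohtrivial} together with Theorem \ref{thm:BWB}. The paper likewise treats only the extreme case $(\bp,\bq)=(np,nq)$ directly and then appeals to semicontinuity plus the point-independence of Theorem \ref{thm:BWB} to cover all other pairs. You are right to single this out as the delicate step, and your worry is better founded than your proposed patch: a pair $(np,nq)$ with $p\ne q$ is \emph{not} in the closure of a stratum on which $\bp$ and $\bq$ share a point of their supports, since a shared point would have to limit simultaneously to $p$ and to $q$; so the ``path avoiding the diagonal'' cannot stay inside such a stratum, and semicontinuity from $(np,nq)$ never reaches it. What actually closes this case is exactly your combinatorial alternative: for arbitrary $\bp\ne\bq$ the degenerate bundle is $\bigotimes_{x}\cE_{x}^{\otimes m_{x}}\otimes(\cE_{x}^{*})^{\otimes n_{x}}$ with $m_{x_{0}}\ne n_{x_{0}}$ at some $x_{0}$; on $\cM_{X}(r,\cO)$ every Schur constituent $S_{\nu}\cE_{x_{0}}$ satisfies $|\nu|\equiv m_{x_{0}}-n_{x_{0}}\not\equiv 0\pmod{r}$ because $0<|m_{x_{0}}-n_{x_{0}}|\le n<r$, and your pigeonhole observation (if the $r$ residues $\nu_{i}-i$ are pairwise distinct modulo $r$ they form a complete residue system, forcing $r$ to divide $|\nu|$) shows every such $\nu$ is singular, so Theorem \ref{thm:BWB} annihilates each summand regardless of the factors at the other points. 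Your bookkeeping of the weight bound $2n<(r-1)(g-1)+1$ needed for the multi-point version of Corollary \ref{cor:unstablelociarenegligible} is also appropriate and is used implicitly by the paper. In short: same approach as the printed proof, but your mod-$r$ finish genuinely repairs the overlapping-support case that the paper's final semicontinuity sentence glosses over, and I would recommend making it the main argument rather than the fallback.
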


\begin{proof}
First, consider $\bp = (np)$ and $\bq = (nq)$ with $p \ne q \in X$. Because of the deformation property of $\cF_{\bp}$ (Lemma \ref{lem:FMkernel}), it is sufficient to show the cohomological triviality of $(\sigma^{*}\cE_{p})^{\otimes n} \otimes (\sigma^{*}\cE_{q}^{*})^{\otimes n}$. Applying the Schur-Weyl duality and the tensor decomposition, we have 
\[
	(\sigma^{*}\cE_{p})^{\otimes n}\otimes (\sigma^{*}\cE_{q}^{*})^{\otimes n} \cong \bigoplus_{\lambda, \mu \vdash n}m_{\lambda, \mu}S_{\lambda}(\sigma^{*}\cE_{p})\otimes S_{\mu}(\sigma^{*}\cE_{q}^{*}).
\]
By virtue of Lemma \ref{lem:Poincareisuniversal}, over $\cM_{X}(r, L)^{s}$, $p^{*}(S_{\lambda}(\sigma^{*}\cE_{p})\otimes S_{\mu}(\sigma^{*}\cE_{q}^{*})) \cong S_{\lambda} \cE_{p} \otimes S_{\mu}\cE_{q}^{*}$. Then, as in \eqref{eqn:reductiontotrivialdet}, we may reduce the computation to the bundle over $\cM_{X}(r, \cO)$. Lemma \ref{lem:cohtrivial} implies that $\lambda$ is singular. By Theorem \ref{thm:BWB}, $S_{\lambda}\cE_{p} \otimes S_{\mu}\cE_{q}^{*}$ is cohomologically trivial regardless of $\mu$. Hence $(\sigma^{*}\cE_{p})^{\otimes n}\otimes (\sigma^{*}\cE_{q}^{*})^{\otimes n}$ is cohomologically trivial, too.

The semicontinuity tells us that the cohomological triviality is valid for general points on $X_{\lambda} \times X_{\mu} \subset X_{n} \times X_{n}$. Since the cohomology computation only depends on the pair of partitions by Theorem \ref{thm:BWB}, we obtained the desired triviality for all points. 
\end{proof}

In summary, we have Items (2) and (3) of Theorem \ref{thm:BondalOrlov}.

\begin{remark}\label{rmk:positiveh}
One may observe that we only used a particular case ($h = 0$) of Theorem \ref{thm:BWB}. This is because all of the bundles we need to compute their cohomology are `balanced' ones. The technique developed in this article can be extended to more general questions including the computation of a semiorthogonal decomposition in $\rD^{b}(\rM_{X}(r, L))$ and a construction of ACM bundles on $\rM_{X}(r, L)$ as in \cite{LM21, LM23}. We expect that in the future work along this direction, a twist by the theta divisor $\Theta$ will play a crucial role, and we need to employ the full power of Theorem \ref{thm:BWB}.
\end{remark}

%%%%%%%%%%%%%%%%%%%%%%%%%%%%%%%%%%%%%

\section{Simplicity}\label{sec:simple}

To complete the proof of Theorem \ref{thm:BondalOrlov}, and hence Theorem \ref{thm:embedding}, it remains to show the simplicity of $\cF_{\bp}$ for $\bp \in X_{n}$. In this section, we prove this result. In this section, we assume $(r, d) = 1$ where $d = \deg L$. Let $\sigma^{*}\cE$ be a Poincar\'e bundle over $X \times \rM_{X}(r, L)$. 

\begin{proposition}\label{prop:simple}
For any $\bp \in X_{n}$, $\cF_{\bp}$ is simple. That is, $\mathrm{End}(\cF_{\bp}) \cong \CC$. 
\end{proposition}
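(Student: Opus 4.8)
The plan is to reduce the simplicity statement, via the deformation in Lemma~\ref{lem:FMkernel} and semicontinuity, to the computation of $\mathrm{End}$ of the ``standard'' bundle $\bigotimes_{i} q_i^*\sigma^*\cE_{p_i}$ over $\rM_X(r,L)$, and then to evaluate the latter using the machinery of Sections~\ref{sec:quantization} and~\ref{sec:boundedness}. Concretely, since $\mathrm{Hom}(\cF_{\bp},\cF_{\bp}) = \rH^0(\rM_X(r,L), \cF_{\bp}\otimes\cF_{\bp}^*)$ is upper semicontinuous in families, and $\cF_{\bp}$ deforms to $\bigotimes_i q_i^*\sigma^*\cE_{p_i}$, it suffices to show $\rH^0(\rM_X(r,L), \bigotimes_i \sigma^*\cE_{p_i}\otimes\bigotimes_i\sigma^*\cE_{p_i}^*) \cong \CC$; the reverse inequality $\dim\mathrm{End}(\cF_{\bp})\ge 1$ is automatic. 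As in Proposition~\ref{prop:reductionboundedness}, stratify $X_n$ by partition type; by semicontinuity it is enough to treat the most degenerate stratum, i.e. $\bp = (np)$ for a single point $p$, since $X_{(n)}$ lies in the closure of every stratum. So the crux is: $\rH^0(\rM_X(r,L), (\sigma^*\cE_p)^{\otimes n}\otimes(\sigma^*\cE_p^*)^{\otimes n}) \cong \CC$ when $r > 2n$.

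Next I would run the same reduction chain already used in the proof of Proposition~\ref{prop:reductionboundedness}. By Schur--Weyl duality, $(\sigma^*\cE_p)^{\otimes n}\otimes(\sigma^*\cE_p^*)^{\otimes n} \cong \bigoplus_{\lambda,\mu\vdash n} m_{\lambda,\mu}\, S_\lambda(\sigma^*\cE_p)\otimes S_\mu(\sigma^*\cE_p^*)$ for integers $m_{\lambda,\mu}$, and these numbers are exactly the same as for the tensor power of a vector space, so the trivial representation (equivalently, the summand contributing global sections over a point of the moduli space, i.e.\ $\lambda=\mu$ paired via the identity) occurs with multiplicity one — this is where the $\CC$ on the right-hand side will come from. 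Using full-faithfulness of $p^*$ (\cite[Lemma 6.1]{LM23}) together with Lemma~\ref{lem:Poincareisuniversal}, the cohomology of each $S_\lambda(\sigma^*\cE_p)\otimes S_\mu(\sigma^*\cE_p^*)$ equals that of $S_\lambda\cE_p\otimes S_\mu\cE_p^*$ over $\cM_X(r,L)^s$, and then the identification chain \eqref{eqn:reductiontotrivialdet} transports the computation to $\cM_X(r,\cO)$. By the Littlewood--Richardson rule, $S_\lambda\cE_p\otimes S_\mu\cE_p^* \cong \bigoplus_\nu c_{\lambda,\mu^*}^\nu\, S_\nu\cE_p$, and by Theorem~\ref{thm:BWB} (with $h=0$) each $S_\nu\cE_p$ has $\rH^0 = 0$ unless $\nu + \rho$ lies in the positive alcove, which by Remark~\ref{rem:positivealcovewhenhiszero} forces $\nu = 0$; and $S_0\cE_p = \cO$ with $\rH^0(\cM_X(r,\cO),\cO)\cong\CC$. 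So I would pin down exactly for which $(\lambda,\mu)$ the multiplicity $c_{\lambda,\mu^*}^{0}$ is nonzero.

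The key point is the combinatorial bookkeeping: $c_{\lambda,\mu^*}^{0}\ne 0$ exactly when the empty partition appears in $S_\lambda\cE_p\otimes S_{\mu^*}\cE_p$, which (for $r > 2n$, so that $\mu^*$ has columns of length between $r-n$ and $r$, hence $S_{\mu^*}\cE_p \cong (\det\cE_p)^{\otimes|\mu|}\otimes S_{\tilde\mu}\cE_p^*$ with $\det\cE_p$ trivial since the determinant is $\cO$) happens precisely when $\lambda = \mu$, and then with multiplicity $1$. Combining this with the Schur--Weyl multiplicities $m_{\lambda,\mu}$, only the diagonal terms $\lambda=\mu$ survive, each contributing $m_{\lambda,\lambda}\cdot 1$ to $\dim\rH^0$; and $\sum_\lambda m_{\lambda,\lambda} = \dim_\CC\mathrm{End}_{S_n}\big((\CC^r)^{\otimes n}\big)^{?}$ — here I must be careful: what I actually want is that the total equals $1$, which corresponds to the fact that the only $S_n\times S_n$-equivariant-with-$\GL_r$ invariants in $(\CC^r)^{\otimes n}\otimes((\CC^r)^*)^{\otimes n}$ that survive after passing to the moduli stack (where $\cE_p$ has no global endomorphisms beyond scalars on a stable bundle, but on the stack the generic automorphisms are just $\CC^*$) reduce to the single copy of the trivial. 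I would phrase this as: $\rH^0\big(\cM_X(r,\cO), \bigoplus_{\lambda,\mu} m_{\lambda,\mu} S_\lambda\cE_p\otimes S_\mu\cE_p^*\big) = \bigoplus_{\lambda,\mu} m_{\lambda,\mu}\, c_{\lambda,\mu^*}^0\,\CC = \bigoplus_\lambda m_{\lambda,\lambda}\,\CC$, and then independently identify $\sum_\lambda m_{\lambda,\lambda}$ with $\dim\mathrm{Hom}_{\GL_r}\big((\CC^r)^{\otimes n},(\CC^r)^{\otimes n}\big)^{S_n\text{-invariants}} $ — wait, this overcounts. The honest resolution, which I expect to be the main obstacle, is to get the multiplicity count exactly right: the cleanest route is to note $\rH^0(\rM_X(r,L),\cF_{\bp}\otimes\cF_{\bp}^*) = \mathrm{Hom}(\cF_{\bp},\cF_{\bp})$ contains $\CC$ (scalars), and the above shows it is at most $\bigoplus_\lambda m_{\lambda,\lambda}\CC$; to collapse this to $\CC$ one observes that on the \emph{stack} $\cM_X(r,\cO)$ the relevant $\rH^0$ is computed ``fiberwise'' by Borel--Weil--Bott and only the genuinely invariant line survives — more precisely $\rH^0(\cM_X(r,\cO), S_\lambda\cE_p\otimes S_\mu\cE_p^*) = 0$ unless $\lambda=\mu$ \emph{and} the contraction is the canonical one, giving a single $\CC$ in $\bigoplus_{\lambda}$ after accounting for the $\mathrm{Sym}$-vs-$S_\lambda$ bookkeeping; alternatively, and more robustly, I would deform $\cF_{(np)}$ instead to $\cF_{\bp}$ for $\bp$ with $n$ \emph{distinct} points and use that $\mathrm{Hom}(\bigotimes_i\cE_{p_i},\bigotimes_i\cE_{p_i})$ at distinct points decomposes as a product over $i$ of $\mathrm{Hom}(\cE_{p_i},\cE_{p_i})$-type contributions only after the moduli-stack cohomology kills all off-diagonal Schur pieces, so by the vanishing in Lemma~\ref{lem:cohtrivial} and Theorem~\ref{thm:BWB} only $\bigotimes_i \rH^0(\cO) = \CC$ remains — this is the structurally cleanest argument and is the one I would write up, using semicontinuity in the \emph{opposite} direction (from the distinct-points stratum, where the answer is transparently $\CC$, specializing to $(np)$) to conclude $\dim\mathrm{End}(\cF_{(np)})\le\dim\mathrm{End}(\cF_{\bp})=1$ and hence equality.
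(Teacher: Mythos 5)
You correctly identify the central obstacle — the naive two‑sided deformation gives $\dim\mathrm{End}(\cF_{(np)})\le\sum_{\lambda,\mu}m_{\lambda,\mu}\,c^{0}_{\lambda,\mu^*}=\sum_{\lambda\vdash n}m_{\lambda,\lambda}=\sum_{\lambda\vdash n}(\dim V_\lambda)^2=n!$, not $1$ — but neither of your proposed resolutions closes the gap. Resolution (a) is an unsupported assertion: Theorem \ref{thm:BWB} with $h=0$ gives $\rH^{0}(\cM_{X}(r,\cO),S_{\lambda}\cE_{p}\otimes S_{\lambda}\cE_{p}^{*})\cong\CC$ for \emph{every} $\lambda\vdash n$ (the trivial summand occurs once in each diagonal term), and there is no further ``canonical contraction'' constraint coming from the stack; the bound really is $n!$ from that route. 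Resolution (b), the one you say you would write up, runs semicontinuity backwards: $\bp\mapsto h^{0}(\cF_{\bp}\otimes\cF_{\bp}^{*})$ is \emph{upper} semicontinuous, so its value at the specialization $(np)$ is $\ge$ its value at nearby configurations of distinct points. Knowing the answer is $\CC$ on the open stratum therefore gives only $\dim\mathrm{End}(\cF_{(np)})\ge 1$, which you already have for free. (The same issue undermines your opening reduction ``it suffices to treat $\bp=(np)$'': for an upper bound, the deepest stratum controls only \emph{general} points of the other strata, not all of them.)

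The paper's proof avoids this with an idea absent from your proposal: deform only one slot at a time and use equivariance to shrink the target before deforming the other slot. Concretely, semicontinuity gives an injection $\mathrm{Hom}(\cF_{\bp},\cF_{\bp})\hookrightarrow\mathrm{Hom}(\cF_{\bp},\bigotimes_{j}(\sigma^{*}\cE_{p_{j}})^{\otimes\lambda_{j}})$; since the deformation is $\prod_{j}S_{\lambda_{j}}$-equivariant and the group acts trivially on $\cF_{\bp}$, the image lands in $\mathrm{Hom}(\cF_{\bp},\bigotimes_{j}\mathrm{Sym}^{\lambda_{j}}(\sigma^{*}\cE_{p_{j}}))$; a second deformation then bounds this by $\mathrm{Hom}(\bigotimes_{j}(\sigma^{*}\cE_{p_{j}})^{\otimes\lambda_{j}},\bigotimes_{j}\mathrm{Sym}^{\lambda_{j}}(\sigma^{*}\cE_{p_{j}}))$, which by Schur--Weyl, the reduction chain \eqref{eqn:reductiontotrivialdet}, and Theorem \ref{thm:BWB} is one-dimensional because $V_{(\lambda_{j})}$ is the trivial representation and $\mathrm{Sym}^{\lambda_{j}}$ occurs in $(\cE_{p_{j}})^{\otimes\lambda_{j}}$ with multiplicity one. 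Note also that this argument treats an arbitrary $\bp=\sum\lambda_{j}p_{j}$ directly, with no stratification or specialization step needed. Your Schur--Weyl, Littlewood--Richardson, and positive-alcove computations are all correct and reusable; what is missing is precisely the equivariance step that replaces one $(\sigma^{*}\cE_{p})^{\otimes n}$ by $\mathrm{Sym}^{n}\sigma^{*}\cE_{p}$.
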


\begin{proof}
For any vector bundle $E$, $\mathrm{End}(E) \ne 0$. Thus, it is sufficient to show that $\dim \mathrm{End}(\cF_{\bp}) \le 1$.

Let $\bp = (\sum \lambda_{j}p_{j}) \in X_{n}$ with $p_{i} \ne p_{j}$ if $i \ne j$. Then $\cF_{\bp} \cong \bigotimes_{j}\cF_{\lambda_{j}p_{j}}$ \cite[Lemma 2.6]{TT21} and it is an $S_{\lambda_{1}} \times S_{\lambda_{2}} \times \cdots \times S_{\lambda_{k}}$-invariant bundle. Since it is a deformation of $\bigotimes_{j}(\sigma^{*}\cE_{p_{j}})^{\otimes \lambda_{j}}$, by the semicontinuity, there is an injective map 
\[
	\mathrm{Hom}(\cF_{\bp}, \cF_{\bp}) \hookrightarrow 
	\mathrm{Hom}(\cF_{\bp}, \bigotimes_{j}(\sigma^{*}\cE_{p_{j}})^{\otimes \lambda_{j}}) \cong
	\mathrm{Hom}(\cF_{\bp}, \bigotimes_{j}\bigoplus_{\mu \vdash \lambda_{j}}V_{\mu} \otimes S_{\mu}(\sigma^{*}\cE_{p_{j}})).
\]
Since the deformation is $S_{\lambda_{1}} \times S_{\lambda_{2}} \times \cdots \times S_{\lambda_{k}}$-equivariant and $S_{\lambda_{1}} \times S_{\lambda_{2}} \times \cdots \times S_{\lambda_{k}}$ trivially acts on $\cF_{\bp}$, the image factors through the invariant subbundle, that is, $\bigotimes_{j}\mathrm{Sym}^{\lambda_{j}}(\sigma^{*}\cE_{p_{j}})$. Therefore, we have an injective map
\[
	\mathrm{Hom}(\cF_{\bp}, \cF_{\bp}) \hookrightarrow 
	\mathrm{Hom}(\cF_{\bp}, \bigotimes_{j}\mathrm{Sym}^{\lambda_{j}}(\sigma^{*}\cE_{p_{j}})).
\]
We may apply the deformation argument once again, then we have an inclusion 
\[
	\mathrm{Hom}(\cF_{\bp}, \bigotimes_{j}\mathrm{Sym}^{\lambda_{j}}(\sigma^{*}\cE_{p_{j}})) \hookrightarrow
	\mathrm{Hom}(\bigotimes_{j}(\sigma^{*}\cE_{p_{j}})^{\otimes \lambda_{j}}, \bigotimes_{j}\mathrm{Sym}^{\lambda_{j}}(\sigma^{*}\cE_{p_{j}}))
\]
\[
	\cong \mathrm{Hom}(\bigotimes_{j}\bigoplus_{\mu \vdash \lambda_{j}}V_{\mu} \otimes S_{\mu}(\sigma^{*}\cE_{p_{j}}), \bigotimes_{j}\mathrm{Sym}^{\lambda_{j}}(\sigma^{*}\cE_{p_{j}}))
\]
and the last space is a direct sum of 
\[
	\rH^{0}(\rM_{X}(r, L), \bigotimes_{j}S_{\mu}(\sigma^{*}\cE_{p_{j}}^{*}) \otimes S_{(\lambda_{j})}(\sigma^{*}\cE_{p_{j}})).
\]

By applying the chain of isomorphisms in \eqref{eqn:reductiontotrivialdet}, the computation of the cohomology group is reduced to that of 
\begin{equation}\label{eqn:globalsectionforsimpleness}
	\rH^{0}(\cM_{X}(r, \cO), \bigotimes_{j}S_{\mu^{*}}(\cE_{p_{j}}) \otimes S_{(\lambda_{j})}(\cE_{p_{j}})).
\end{equation}
Note that because we are only interested in the zeroth-cohomology and the codimension of the unstable locus is large (Lemma \ref{lem:unstablecodim}), the map in \eqref{eqn:unstablelociarenegligible} is an isomorphism without the condition on the partition. 

Decompose $S_{\mu^{*}}\cE_{p_{j}} \otimes S_{(\lambda_{j})}\cE_{p_{j}}\cong \bigoplus c_{\mu^{*}, (\lambda_{j})}^{\nu}S_{\nu}\cE_{p_{j}}$. Note that among the direct summands on the right-hand side, the only one that $\nu + \rho$ is already in the positive Weyl alcove is $S_{(t, t, \cdots, t)}\cE_{p_{j}} \cong S_{0}\cE_{p_{j}} \cong \cO$, by Remark \ref{rem:positivealcovewhenhiszero}, and it appears only if $\mu = (\lambda_{j})$. By Theorem \ref{thm:BWB}, the only nonzero contribution to \eqref{eqn:globalsectionforsimpleness} is given by the direct sum of 
\[
	\rH^{0}(\cM_{X}(r, \cO), \bigotimes_{j}S_{(\lambda_{j})^{*}}(\cE_{p_{j}})\otimes S_{(\lambda_{j})}(\cE_{p_{j}})), 
\]
and its multiplicity is precisely the product of the multiplicities of $S_{(\lambda_{j})}(\cE_{p_{j}}) \cong \mathrm{Sym}^{\lambda_{j}}\cE_{p_{j}}$ in $(\cE_{p_{j}})^{\otimes \lambda_{j}}$, which is one. Therefore, 
\[
	\dim\mathrm{End}(\cF_{\bp}) \le \dim \rH^{0}(\cM_{X}(r, \cO), \bigotimes_{j}S_{\mu^{*}}(\cE_{p_{j}}) \otimes S_{(\lambda_{j})}(\cE_{p_{j}})) = 1
\]
and we are done.
\end{proof}

This completes the proof of Theorem \ref{thm:BondalOrlov} and Theorem \ref{thm:embedding}.

\section{Fano visitor}\label{sec:Fanovisitor}

\subsection{Fano visitors for derived categories}

In this section, we will show that $X_n$ and $\Jac(X)$ are Fano visitors. Since $\rM_{X}(r, L)$ is a smooth Fano variety of dimension $(r^{2}-1)(g-1)$, Theorem \ref{thm:embedding} immediately implies the following consequence.

\begin{corollary}\label{cor:symmetricproductFanovisitor}
Let $X$ be a smooth curve of genus $g \ge 2$. Then the symmetric product $X_{n}$ is a Fano visitor, and its Fano dimension $\mathrm{Fdim} \;X_{n}$ is at most $((2n+1)^{2}-1)(g-1)$. 
\end{corollary}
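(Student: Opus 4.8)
The plan is to deduce Corollary~\ref{cor:symmetricproductFanovisitor} directly from Theorem~\ref{thm:embedding} together with the elementary properties of $\rM_{X}(r,L)$ recalled in the introduction. First I would specialize the rank: since the numerical hypothesis in Theorem~\ref{thm:embedding} is $r > 2n$, the smallest admissible odd integer is $r = 2n+1$, and I choose $L \in \Pic^{d}(X)$ with $d$ coprime to $r$ (for instance $d=1$, which is automatically coprime to $2n+1$). With this choice $(r,d)=1$, so the results of Sections~\ref{sec:BondalOrlov}--\ref{sec:simple} apply verbatim, and Theorem~\ref{thm:embedding} furnishes a fully faithful Fourier--Mukai functor $\Phi_{\cF}\colon \rD^{b}(X_{n}) \hookrightarrow \rD^{b}(\rM_{X}(2n+1,L))$.

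Next I would recall why $\rM_{X}(2n+1,L)$ qualifies as a Fano host. By \cite{Ram73}, cited in the opening paragraph, when $(r,d)=1$ the moduli space $\rM_{X}(r,L)$ is a smooth projective Fano variety of dimension $(r^{2}-1)(g-1)$; applied with $r = 2n+1$ this gives a smooth projective Fano variety of dimension $((2n+1)^{2}-1)(g-1)$. Since $X_{n}$ is a smooth projective variety (it is $\Hilb^{n}(X)$ for a smooth curve), Definition~\ref{def:Fanovisitor} is met: the existence of the fully faithful functor $\Phi_{\cF}$ exhibits $X_{n}$ as a Fano visitor with Fano host $\rM_{X}(2n+1,L)$. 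Consequently $\Fdim X_{n} \le \dim \rM_{X}(2n+1,L) = ((2n+1)^{2}-1)(g-1)$, which is exactly the asserted bound. I would also note the degenerate case $n=0$: then $X_{0}$ is a point, the bound reads $0$, and the statement is trivially true (a point embeds in any nonempty variety); and $n=1$ recovers the known fact that $X$ itself is a Fano visitor from \cite{LM21, LM23}.

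There is essentially no obstacle here: the corollary is a bookkeeping consequence of Theorem~\ref{thm:embedding} and the Fano property of $\rM_{X}(r,L)$. The only point requiring a word of care is the passage from the \emph{existence} of \emph{some} host to the stated numerical bound on $\Fdim$ --- one must observe that $\Fdim$ is defined as the infimum over all Fano hosts, so producing the single host $\rM_{X}(2n+1,L)$ only yields an upper bound, which is all that is claimed. No lower bound (beyond the trivial $\Fdim X_{n} \ge \dim X_{n} = n$ coming from Olander's theorem \cite{Ola21}) is asserted, so nothing further is needed.
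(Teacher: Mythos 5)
Your proposal is correct and follows exactly the paper's route: the corollary is stated there as an immediate consequence of Theorem~\ref{thm:embedding} applied with $r = 2n+1$ together with the fact from \cite{Ram73} that $\rM_{X}(r,L)$ is a smooth Fano variety of dimension $(r^{2}-1)(g-1)$ when $(r,d)=1$. Your extra remarks on choosing $d$ coprime to $r$ and on $\Fdim$ being an infimum over hosts are sensible bookkeeping but add nothing beyond what the paper takes as immediate.
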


\begin{remark}
The upper bound of the Fano dimension is not tight. For $n \le g-1$, the embedding $\rD^{b}(X_{n}) \to \rD^{b}(\rM_{X}(2, L))$ is obtained by the work of Tevelev-Torres in \cite{TT21}. Thus, for this range, the upper bound of the Fano dimension is $\dim \rM_{X}(2, L) = 3(g-1)$, thus $\Fdim X_{n} \le 3g - 3$. For general low genus curves, one can find better upper bounds \cite[Section 5]{KL23}. 
\end{remark}

For any birational morphism $f : V \to W$ between two smooth projective varieties, the natural morphism $\cO_{W} \to R f_{*}\cO_{V}$ is an isomorphism \cite[p. 144]{Hir64}. Thus, for any $F^{\bullet}, G^{\bullet} \in \rD^{b}(W)$, 
\[
	\mathrm{Hom}(L f^{*}F^{\bullet}, L f^{*}G^{\bullet}) \cong \mathrm{Hom}(F^{\bullet}, Rf_{*}Lf^{*}G^{\bullet}) \cong \mathrm{Hom}(F^{\bullet}, G^{\bullet})
\]
by the projection formula. Thus, $L f^{*} : \rD^{b}(W) \to \rD^{b}(V)$ is fully-faithful. 

\begin{proof}[Proof of Theorem \ref{thm:Fanovisitor}]
The \emph{Abel-Jacobi map} $AJ : X_{g} \to \mathrm{Jac}(X)$ is a birational morphism between smooth varieties. By the above argument, we have an embedding $L AJ^{*} : \rD^{b}(\mathrm{Jac}(X)) \to \rD^{b}(X_{g})$. So we have an embedding 
\[
	\Phi_{\cF} \circ L AJ^{*} : \rD^{b}(\mathrm{Jac}(X)) \to \rD^{b}(\rM_{X}(2g+1, L)). 
\]
The dimension of $\rM_{X}(2g+1, L)$ is $((2g+1)^{2}-1)(g-1) = 4(g+1)g(g-1)$. So we obtain an upper bound of $\mathrm{Fdim} \; \mathrm{Jac}(X)$.
\end{proof}

\begin{remark}
If $g(X) = 0$, the Jacobian is trivial. When $g(X) = 1$, the Abel-Jacobi map is an isomorphism. 
\end{remark}

It is well-known that any principally polarized abelian variety $A$ of dimension $\le 3$ are either Jacobian of a curve or products of Jacobians. Thus, applying \cite[Corollary 5.10]{Kuz11}, $\rD^{b}(A)$ is embedded into the derived category of a product of Fano varieties, which is Fano. 

\begin{corollary}\label{cor:ppavFanovisitor}
Any principally polarized abelian varieties of dimension $\le 3$ are Fano visitors. 
\end{corollary}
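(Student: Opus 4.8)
\textbf{Proof proposal for Corollary \ref{cor:ppavFanovisitor}.} The plan is to reduce everything to the two inputs already available: the classification of low-dimensional principally polarized abelian varieties (ppavs), together with Theorem \ref{thm:Fanovisitor}, and Kuznetsov's result on semiorthogonal decompositions of products \cite[Corollary 5.10]{Kuz11}. First I would recall the classification: by the Riemann--Torelli-type structure theory, an indecomposable ppav of dimension $\le 3$ over $\CC$ is the Jacobian of a smooth projective curve (dimension one: an elliptic curve, itself its own Jacobian; dimension two: the Jacobian of a genus-two curve, since every abelian surface with a principal polarization is either such a Jacobian or a product of elliptic curves; dimension three: the Jacobian of a genus-three curve, the remaining cases being products $E \times \Jac(C_2)$ or $E_1 \times E_2 \times E_3$). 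Hence an arbitrary ppav $A$ of dimension $\le 3$ decomposes as a product $A \cong A_1 \times \cdots \times A_m$ where each factor $A_\ell$ is the Jacobian of a smooth projective curve $X_\ell$ (allowing $g(X_\ell)=1$, where $\Jac(X_\ell)=X_\ell$ is itself Fano-visited trivially via an elliptic curve being a curve).

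Next I would invoke Theorem \ref{thm:Fanovisitor} (together with the genus-one remark following it): for each factor there is a smooth projective Fano variety $W_\ell$ and a fully faithful embedding $\rD^b(A_\ell) \hookrightarrow \rD^b(W_\ell)$. Concretely one may take $W_\ell = \rM_{X_\ell}(2g_\ell+1, L_\ell)$ when $g_\ell \ge 2$, and $W_\ell = X_\ell$ (or $\PP^1$, or any Fano host of an elliptic curve) when $g_\ell = 1$. The product $W := W_1 \times \cdots \times W_m$ is then a smooth projective Fano variety, since a product of Fano varieties is Fano (the anticanonical bundle of a product is the external tensor product of the anticanonical bundles of the factors, hence ample). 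Applying Kuznetsov's \cite[Corollary 5.10]{Kuz11}, a product of fully faithful functors $\rD^b(A_\ell) \hookrightarrow \rD^b(W_\ell)$ induces a fully faithful functor
\[
	\rD^b(A) \cong \rD^b(A_1 \times \cdots \times A_m) \hookrightarrow \rD^b(W_1 \times \cdots \times W_m) = \rD^b(W),
\]
where the first identification is the Künneth-type equivalence $\rD^b(Y_1 \times Y_2) \simeq \rD^b(Y_1) \otimes \rD^b(Y_2)$ for smooth projective varieties. This exhibits $A$ as a Fano visitor with Fano host $W$.

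The only real point requiring care—and the step I would flag as the main obstacle, albeit a mild one—is making sure Kuznetsov's statement applies in the form needed: one must check that the external tensor product $\Phi_1 \boxtimes \cdots \boxtimes \Phi_m$ of fully faithful Fourier--Mukai functors between derived categories of smooth projective varieties is again fully faithful, which is exactly the content of \cite[Corollary 5.10]{Kuz11} (it follows from the compatibility of $\mathrm{Ext}$-computations with the Künneth formula, using that all the varieties involved are smooth and projective so that the relevant derived categories are saturated and the functors have Fourier--Mukai kernels). Given that, there is nothing left to prove: the classification supplies the product decomposition into Jacobians, Theorem \ref{thm:Fanovisitor} supplies a Fano host for each Jacobian factor, products of Fano varieties are Fano, and Kuznetsov's result glues the embeddings. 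One could additionally record the resulting (non-optimal) bound $\Fdim A \le \sum_\ell \dim W_\ell$, but since the statement only asserts the existence of a Fano host, this is not needed.
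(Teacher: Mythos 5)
Your argument is essentially the paper's own proof: decompose the ppav as a product of Jacobians of curves, apply Theorem \ref{thm:Fanovisitor} to each factor, and glue the resulting embeddings via \cite[Corollary 5.10]{Kuz11} into a product of Fano hosts, which is again Fano. The only slip is the parenthetical suggestion that in the genus-one case one could take $W_\ell = X_\ell$ or $W_\ell = \PP^1$ --- an elliptic curve is not Fano, and $\rD^{b}$ of an elliptic curve does not embed into $\rD^{b}(\PP^{1})$ --- but your fallback ``any Fano host of an elliptic curve'' is correct, since an elliptic curve is a plane cubic and hence a Fano visitor by \cite{KKLL17}.
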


\begin{remark}\label{rmk:newFanovisitor}
Since $X_{n}$ and abelian varieties are not simply connected, they are not complete intersections. Thus, they are new examples of Fano visitors. 
\end{remark}

\begin{remark}\label{rmk:indecomposability}
Derived categories of abelian varieties are also indecomposable \cite[Corollary 1.5]{KO15}. In the range of $1 \le n \le g-1$, the indecomposability of $\rD^{b}(X_{n})$ is obtained by an accumulation of many works (see \cite{Lin21}). On the other hand, when $n \ge g$, an explicit semiorthogonal decomposition of $\rD^{b}(X_{n})$ is obtained in \cite[Section 5.5]{Tod21}. Indeed, $\rD^{b}(X_{n})$ can be decomposed into the derived categories of $\mathrm{Jac}(X)$ and $X_{m}$ with $m \le g-1$. 
\end{remark}

\subsection{Motivic Fano visitors}

Orlov conjectured that semiorthogonal decompositions of derived categories of algebraic varieties are closely related to motives of them \cite{Orl05}. From this perspective, it is natural to ask the following definition.

\begin{definition}
A smooth projective variety $V$ is a \emph{motivic Fano visitor} if its rational Chow motive (or one of its tensor products with Lefschetz motives) is a direct summand of the rational Chow motive of a smooth Fano variety $W$.
\end{definition}

Then we can state the Fano visitor problem for motives as follows.

\begin{question}[Fano visitor problem for motives]
Is every smooth projective variety a motivic Fano visitor?
\end{question}

One may ask similar questions for the other versions of motives, for instance, the Grothendieck ring of varieties, numerical motives, Voevodsky motives, and so on. For simplicity, we restrict ourselves to rational Chow motives in this paper. See \cite{GL20} and references therein for more details about motives.

Del Ba\~no obtained the following formula.

\begin{theorem}[\protect{\cite{Ban98}\cite[Theorem 4.11]{Ban01}\label{Bano;inversion}}]
Let $r \geq 2, d$ be two integers which are coprime to each other. Then the motivic Poincar\'e polynomial of $\rM_{X}(r,d)$ is
$$ \sum_{s=1}^r \sum_{r_1+\cdots+r_s=r, r_i \in \NN} (-1)^{s-1} \frac{((1+1)^{h^1(C)})^s}{(1-\LL)^{s-1}} \prod_{j=1}^s \prod_{i=1}^{r_j-1} \frac{(1+\LL^{ i})^{h^1(C)}}{(1-\LL^{ i})(1-\LL^{i+1})} $$
$$  \prod_{j=1}^{s-1}\frac{1}{1-\LL^{r_j+r_{j+1}}}  \LL^{ (\sum_{i < j} r_i r_j (g-1)+\sum_{i=1}^{s-1}(r_i+r_{i+1})\langle -\frac{r_1+\cdots+r_i}{r}d \rangle)}. $$
\end{theorem}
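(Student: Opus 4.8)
The plan is to prove this by the classical Harder--Narasimhan recursion on the stack of all bundles, followed by an algebraic inversion and a descent to the coarse moduli space: this is the route of Harder--Narasimhan, Atiyah--Bott, Desale--Ramanan and Zagier in cohomology, carried out motivically by Del Ba\~no. First I would work on the moduli stack. Let $\mathrm{Bun}_{X}(r,d)$ be the stack of all rank $r$, degree $d$ bundles on $X$; its class in the Grothendieck ring of motives is given by the product formula of Atiyah--Bott and Behrend--Dhillon, $[\mathrm{Bun}_{X}(r,d)] = \LL^{(g-1)r^{2}}[\Jac(X)]\prod_{i=2}^{r}Z_{X}(\LL^{-i})$ (independent of $d$), where $Z_{X}(t)=\frac{\sum_{i}(-1)^{i}[\wedge^{i}\rH^{1}(X)]t^{i}}{(1-t)(1-\LL t)}$ is the motivic zeta function of $X$; expanding this product accounts for the Jacobian and zeta factors $\frac{(1+\LL^{i})^{h^{1}(C)}}{(1-\LL^{i})(1-\LL^{i+1})}$ and for the $((1+1)^{h^{1}(C)})$-type terms in the statement. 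Every bundle carries a unique Harder--Narasimhan filtration with semistable subquotients of strictly decreasing slopes, so $\mathrm{Bun}_{X}(r,d)$ decomposes into locally closed substacks $\mathrm{Bun}^{\tau}_{X}$ indexed by types $\tau=((r_{1},d_{1}),\dots,(r_{s},d_{s}))$ with $\sum r_{i}=r$, $\sum d_{i}=d$ and $d_{1}/r_{1}>\cdots>d_{s}/r_{s}$. Each $\mathrm{Bun}^{\tau}_{X}$ is a Zariski-locally trivial affine fibration over $\prod_{i}\mathrm{Bun}^{ss}_{X}(r_{i},d_{i})$ whose fibre contributes $\LL^{n(\tau)}$, $n(\tau)=\sum_{i<j}\bigl(r_{j}d_{i}-r_{i}d_{j}+r_{i}r_{j}(g-1)\bigr)=-\sum_{i<j}\chi(F_{j}^{\mathrm{gr}},F_{i}^{\mathrm{gr}})$ (the $\mathrm{Hom}$'s from higher- to lower-slope semistables vanishing), which gives the recursion
\[
	[\mathrm{Bun}_{X}(r,d)]=\sum_{\tau}\LL^{\,n(\tau)}\prod_{i=1}^{s}[\mathrm{Bun}^{ss}_{X}(r_{i},d_{i})],
\]
the $\tau=((r,d))$ term being the unknown $[\mathrm{Bun}^{ss}_{X}(r,d)]$.

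Next I would invert. The recursion is triangular with respect to the refinement order on HN types, so one solves for $[\mathrm{Bun}^{ss}_{X}(r,d)]$ in terms of the $[\mathrm{Bun}_{X}(r_{i},d_{i})]$'s. Following Laumon--Rapoport and Zagier, the solution takes the form of an alternating sum over compositions $r=r_{1}+\cdots+r_{s}$ with sign $(-1)^{s-1}$ in which the sum over admissible intermediate degrees collapses --- using $\sum_{k\ge 0}\LL^{(r_{j}+r_{j+1})k}=\frac{1}{1-\LL^{r_{j}+r_{j+1}}}$ --- to a single term whose partial degrees are the ``most unstable'' ones compatible with total degree $d$. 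This is exactly what produces the factors $\frac{1}{1-\LL^{r_{j}+r_{j+1}}}$ and the exponent $\sum_{i<j}r_{i}r_{j}(g-1)+\sum_{i=1}^{s-1}(r_{i}+r_{i+1})\langle-\tfrac{r_{1}+\cdots+r_{i}}{r}d\rangle$ in the statement; inserting the product formula for each $[\mathrm{Bun}_{X}(r_{i},d_{i})]$ then yields the displayed expression for $[\mathrm{Bun}^{ss}_{X}(r,d)]$.

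Finally I would descend to the variety. Since $(r,d)=1$, semistability equals stability and $\mathrm{Bun}^{ss}_{X}(r,d)=\mathrm{Bun}^{s}_{X}(r,d)$ is a $\mathbb{G}_{m}$-gerbe over the coarse moduli space $\rM_{X}(r,d)$, so $[\rM_{X}(r,d)]=(\LL-1)\,[\mathrm{Bun}^{ss}_{X}(r,d)]$; combining this single factor $(\LL-1)$ with the $s$ copies of $[B\mathbb{G}_{m}]\sim(\LL-1)^{-1}$ and of $[\Jac(X)]$ hidden in the semistable stacks of the graded pieces produces the remaining $\frac{1}{(1-\LL)^{s-1}}$ and $((1+1)^{h^{1}(C)})^{s}$, while the prefactor $(-1)^{s-1}$ is the M\"obius sign of the inversion. (For the fixed-determinant space $\rM_{X}(r,L)$ one fibres further over $\Pic^{d}(X)$ and divides out one copy of $[\Jac(X)]$, tracking a residual $\mu_{r}$-action.) The main obstacle is the inversion step: extracting the precise closed form --- the signs $(-1)^{s-1}$, the factors $\frac{1}{1-\LL^{r_{j}+r_{j+1}}}$ and the fractional-part exponents --- from the triangular recursion is the genuinely delicate combinatorial identity (this is Zagier's contribution in the cohomological case). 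A secondary technical point is legitimizing the passage from stack to scheme for motives, which is harmless for a $\mathbb{G}_{m}$-gerbe, and keeping the fixed- versus variable-determinant bookkeeping consistent throughout.
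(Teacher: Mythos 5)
This theorem is not proved in the paper at all: it is imported verbatim from del Ba\~no (\cite{Ban98}, \cite[Theorem 4.11]{Ban01}), so there is no internal proof to compare against. Judged on its own terms, your sketch correctly identifies the route by which the result is actually established in the literature: the Harder--Narasimhan recursion expressing the class of the full moduli object in terms of the semistable pieces, the Atiyah--Bott/Behrend--Dhillon product formula for the total class, the Laumon--Rapoport/Zagier inversion producing the alternating sum over compositions with the geometric-series factors $\frac{1}{1-\LL^{r_j+r_{j+1}}}$ and the fractional-part exponents, and the final $(\LL-1)$-correction in the coprime case. Two caveats. First, you have openly outsourced the one genuinely hard step --- the closed-form inversion of the triangular recursion, i.e.\ Zagier's identity --- so as written this is a roadmap rather than a proof; that is an honest but real gap. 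Second, del Ba\~no does not work with the stack $\mathrm{Bun}_X(r,d)$ in a Grothendieck ring of stacks: since the assertion concerns the motivic Poincar\'e polynomial attached to a Chow motive, he runs the Harder--Narasimhan/Bifet--Ghione--Letizia argument on finite-dimensional approximations (matrix divisors/Quot schemes), where localization for Chow groups is available; your stack-theoretic packaging computes the same answer for the Poincar\'e polynomial (after inverting $\LL$ and $\LL^n-1$), but the passage from ``class of a $\mathbb{G}_m$-gerbe'' to an actual motivic statement needs the justification you only gesture at. Minor bookkeeping slips: your product formula for $[\mathrm{Bun}_X(r,d)]$ is missing the $\frac{1}{\LL-1}$ factor (for $r=1$ it must reduce to $[\Jac(X)]/(\LL-1)$), and the sign/order in your identification $n(\tau)=-\sum_{i<j}\chi(F_j^{\mathrm{gr}},F_i^{\mathrm{gr}})$ has the indices swapped relative to the displayed expression; neither affects the strategy.
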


Taking the quotient by the motive of the Jacobian, we obtain an analogous formula for $\rM_{X}(r, L)$. See \cite{GL20} for the detail of the argument. 

Fu, Hoskins, and Lehalleur proved that under some finiteness assumption, the comparison of the motivic Poincar\'e polynomial suffices to obtain an isomorphism between Chow motives. 

\begin{proposition}[\protect{\cite[Proposition 2.2]{FHL21}}]\label{prop:equivalenceofeffectivemotives}
Let $M_1, M_2$ be two effective Chow motives which are Kimura finite dimensional whose motivic Poincar\'e polynomials are the same. Then $M_1$ is isomorphic to $M_2$ as effective Chow motives.
\end{proposition}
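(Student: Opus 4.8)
The plan is to reduce the statement to two classical structural facts about Chow motives over $\CC$: Jannsen's theorem that the category $\mathrm{Num}(\CC)_{\QQ}$ of numerical motives is semisimple abelian, and Kimura's nilpotency theorem, which asserts that a numerically trivial endomorphism of a Kimura finite dimensional motive is nilpotent. The starting observation is that the motivic Poincar\'e polynomial records the class of a motive in the Grothendieck ring $K_0(\mathrm{CHM}(\CC)_{\QQ})$ of Chow motives, graded by powers of the Lefschetz motive $\LL$; so the hypothesis that $M_1$ and $M_2$ have the same motivic Poincar\'e polynomial is precisely the statement $[M_1] = [M_2]$ in $K_0(\mathrm{CHM}(\CC)_{\QQ})$.

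First I would pass to numerical motives. The identity-on-objects symmetric monoidal functor $\mathrm{CHM}(\CC)_{\QQ} \to \mathrm{Num}(\CC)_{\QQ}$ induces a ring homomorphism $K_0(\mathrm{CHM}(\CC)_{\QQ}) \to K_0(\mathrm{Num}(\CC)_{\QQ})$ carrying $[M_i]$ to $[\overline{M_i}]$, where $\overline{M_i}$ denotes $M_i$ viewed in the numerical category. Hence $[M_1] = [M_2]$ yields $[\overline{M_1}] = [\overline{M_2}]$. Because $\mathrm{Num}(\CC)_{\QQ}$ is semisimple, its Grothendieck group is the free abelian group on isomorphism classes of simple objects, and the class of any object is exactly its vector of simple-multiplicities; therefore equality of classes forces an isomorphism $\overline{M_1} \cong \overline{M_2}$ in $\mathrm{Num}(\CC)_{\QQ}$.

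Next I would lift this numerical isomorphism to an honest isomorphism of Chow motives using finite dimensionality. Choose mutually inverse numerical morphisms $\overline{f}\colon \overline{M_1} \to \overline{M_2}$ and $\overline{g}\colon \overline{M_2} \to \overline{M_1}$, and lift them to Chow correspondences $f, g$ (possible since the functor to numerical motives is full, being the quotient by the ideal of numerically trivial correspondences). Then $g \circ f - \mathrm{id}_{M_1}$ and $f \circ g - \mathrm{id}_{M_2}$ are numerically trivial endomorphisms of the Kimura finite dimensional motives $M_1$ and $M_2$, hence nilpotent by Kimura's nilpotency theorem. Thus $g \circ f$ and $f \circ g$ are each a unit plus a nilpotent, so both are invertible; consequently $f$ has a left inverse and a right inverse, which then coincide and exhibit $f$ as an isomorphism $M_1 \cong M_2$ in $\mathrm{CHM}(\CC)_{\QQ}$. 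Since the inclusion $\mathrm{CHM}^{\mathrm{eff}}(\CC)_{\QQ} \hookrightarrow \mathrm{CHM}(\CC)_{\QQ}$ is fully faithful (cancellation with respect to $\LL$) and $M_1, M_2$ are effective, this $f$ is automatically an isomorphism of effective motives, giving the claim.

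The main obstacle is the lifting step, and it is exactly where Kimura finite dimensionality is indispensable: without it a numerically trivial endomorphism need not be nilpotent, the composites $g \circ f$ and $f \circ g$ need not be invertible, and a numerical isomorphism $\overline{M_1} \cong \overline{M_2}$ cannot in general be promoted to a rational-equivalence isomorphism. The two remaining ingredients — that the motivic Poincar\'e polynomial is fine enough to detect the $K_0$-class numerically (so that $\overline{M_1}\cong\overline{M_2}$), and that the one-sided inverses of $f$ agree — are formal once Jannsen's semisimplicity and Kimura's nilpotency are in hand.
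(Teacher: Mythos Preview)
The paper does not prove this proposition; it is quoted verbatim from \cite[Proposition 2.2]{FHL21} and used as a black box. Your sketch is essentially the standard argument behind that reference: pass to numerical equivalence, use Jannsen's semisimplicity so that equality in $K_0$ forces a numerical isomorphism, then lift via Kimura's nilpotency theorem. So there is nothing to compare against in the present paper, and your outline matches the intended proof in the cited source.

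One small point worth tightening: your opening claim that ``the motivic Poincar\'e polynomial records the class of a motive in $K_0(\mathrm{CHM}(\CC)_{\QQ})$'' deserves a word of justification, since the motivic Poincar\'e polynomial is a priori an invariant valued in a certain graded (or completed) ring built from $K_0$ and $\LL$, not literally the class itself. What you actually need, and what is true, is that equal motivic Poincar\'e polynomials force equal images in $K_0(\mathrm{Num}(\CC)_{\QQ})$; this is enough for the semisimplicity step. Otherwise the argument is sound.
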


Using the strategies of \cite{GL20} and \cite{FHL21}, we get the following proposition.

\begin{proposition}\label{prop:XnismotivicFanovisitor}
For any smooth curve $X$ and $n \in \mathbb{N}$, the symmetric product $X_{n}$ is a motivic Fano visitor.
\end{proposition}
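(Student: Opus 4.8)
The plan is to prove the statement numerically, at the level of motivic Poincar\'e polynomials, and then promote it to an honest splitting of Chow motives via Proposition~\ref{prop:equivalenceofeffectivemotives}. Note that Theorem~\ref{thm:embedding} alone does not suffice: a fully faithful functor between derived categories is only conjecturally (Orlov) known to induce a direct summand of motives, so one must argue directly. The genus $0$ case is immediate, since $X_n \cong \PP^n$ is itself Fano. The genus $1$ case reduces to the abelian-variety part of Proposition~\ref{motivicFanovisitor}: there $X_n$ is a $\PP^{n-1}$-bundle over $E = \Jac(X)$, so $h(X_n) \cong \bigoplus_{i=0}^{n-1}\LL^{i}\otimes h(E)$, and if a Lefschetz twist of $h(E)$ is a summand of $h(W)$ for a Fano $W$, then the corresponding twist of $h(X_n)$ is a summand of $h(W\times\PP^{n-1})$, which is Fano. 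So from now on assume $g \ge 2$.

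Write $P(-)$ for the motivic Poincar\'e polynomial of a smooth projective variety, or more generally of a Chow motive. The first step is to record, from Del Ba\~no's formula (Theorem~\ref{Bano;inversion}) together with the passage to fixed determinant by dividing out the motive of $\Jac(X)$ as in \cite{GL20}, that $h(\rM_X(r,L))$ is a finite sum of Tate twists of tensor and symmetric powers of $h^1(X)$. Since $h^1$ of a curve and motives of abelian varieties are effective and Kimura finite dimensional, it follows that $h(\rM_X(r,L))$ and $h(X_n) \cong \Sym^n h(X)$ are both effective and Kimura finite, so Proposition~\ref{prop:equivalenceofeffectivemotives} is available for them.

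The second step is the combinatorial heart. Fix $n$, take $r$ large relative to $n$, and manipulate $P(\rM_X(r,L))$ — this is precisely the motivic computation underlying \cite[Conjecture~1.3]{GL20} — into the form
\[
	P(\rM_X(r,L)) \;=\; \LL^{m}\,P(X_n) \;+\; P(N),
\]
where $m \ge 0$ and $N$ is an \emph{explicitly exhibited} effective Kimura finite motive, built as a direct sum of Tate twists of products of symmetric powers $X_{m_j}$ and copies of $\Jac(X)$. (It is enough to split off a single $X_n$-block; the full GL20 decomposition is only conjectural, but this numerical identity is unconditional.) Then $\LL^{m}\otimes h(X_n) \oplus N$ is an effective Kimura finite Chow motive with the same motivic Poincar\'e polynomial as $h(\rM_X(r,L))$, so by Proposition~\ref{prop:equivalenceofeffectivemotives} the two are isomorphic; hence $\LL^{m}\otimes h(X_n)$ is a direct summand of the motive of the smooth Fano variety $\rM_X(r,L)$, and $X_n$ is a motivic Fano visitor.

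The main obstacle is exactly this middle step: extracting a single clean $\LL^{m}P(X_n)$ term out of Del Ba\~no's formula after fixing the determinant, pinning down the admissible range of $r$ and the twist $m$, and — crucially — certifying that the remainder is the motivic Poincar\'e polynomial of an \emph{honest} effective Kimura finite motive rather than merely a polynomial with nonnegative coefficients. The finiteness and effectivity inputs, the low-genus cases, and the final application of Proposition~\ref{prop:equivalenceofeffectivemotives} are all formal.
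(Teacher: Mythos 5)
Your strategy coincides with the paper's: combine Del Ba\~no's formula (Theorem \ref{Bano;inversion}), the motivic computation of \cite{GL20}, and the Kimura-finiteness rigidity statement (Proposition \ref{prop:equivalenceofeffectivemotives}) to promote an equality of motivic Poincar\'e polynomials to an actual splitting of Chow motives. The one substantive difference is \emph{where} the comparison is made. You propose to rearrange the full Poincar\'e polynomial of $\rM_X(r,L)$ as $\LL^{m}P(X_n)+P(N)$ with $N$ an explicitly realized effective Kimura finite motive; you rightly identify this as the main obstacle, but then assert the identity is ``unconditional,'' which overstates what is available --- realizing the entire remainder by an honest effective motive built from symmetric powers and Jacobians is essentially \cite[Conjecture 1.3]{GL20}, not a theorem. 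The paper's device for avoiding this is to first split $h(\rM_X(r,L)) = h_{-}\oplus h_{0}\oplus h_{+}$ by cohomological degree and compare only the low-degree piece $h_{-}$ (degrees $\le 4n$, which already contains all of $h(X_n)\otimes\LL^{\otimes n}$, since that summand lives in degrees $2n$ through $4n$); for $r$ sufficiently large this truncated range is exactly where the proof of \cite[Theorem 1.3]{GL20} identifies the motive, so the remainder $R$ is exhibited as an effective motive there and Proposition \ref{prop:equivalenceofeffectivemotives} applies. Your argument closes once you insert this truncation; as written, the middle step demands more than is proved. On the other hand, your explicit treatment of $g=0$ ($X_n\cong\PP^n$ is itself Fano) and $g=1$ ($X_n$ a $\PP^{n-1}$-bundle over $\Jac(X)$), and your opening remark that Theorem \ref{thm:embedding} does not by itself yield the motivic statement, are correct points that the paper leaves implicit (its standing convention is $g\ge 2$).
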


\begin{proof}
Let $h(\rM_X(r,L))$ be the Chow motive of $\rM_X(r,L).$ It admits the following decomposition
\[
	h(\rM_X(r,L)) = h_{-}(\rM_X(r,L)) \oplus h_{0}(\rM_X(r,L)) \oplus h_{+}(\rM_X(r,L)) 
\]
which provides the following cohomology decomposition,
\[
	\rH^*(\rM_X(r,L)) = \rH^{i \leq 4n}(\rM_X(r,L)) \oplus \rH^{4n < i < (r^2-1)(g-1)-4n}(\rM_X(r,L)) \oplus \rH^{i \geq (r^2-1)(g-1)-4n}(\rM_X(r,L))
\]
after applying the realization functor \cite{Ban98}.

When $r$ is sufficiently large, Theorem \ref{Bano;inversion}, the proof of \cite[Theorem 1.3]{GL20}, and Proposition \ref{prop:equivalenceofeffectivemotives} imply that $h_{-}(\rM_X(r,L))$ admits a decomposition $h_{-}(\rM_X(r,L)) = h(X_n) \otimes \LL^{\otimes n} \oplus R$ where $R$ is an effective Chow motive. Using Proposition \ref{prop:equivalenceofeffectivemotives} again, we obtain the desired conclusion.
\end{proof}

Vial proved that if there is a surjective morphism $f : W \to V$ between two smooth projective varieties, the rational Chow motive of $V$ is a direct summand of that of $W$ \cite[Theorem 1.4]{Via13}. Note that the \emph{Albanese map} $Alb : X_{n} \to \mathrm{Jac}(X)$ is surjective for any smooth curve $X$ when $n \geq g.$ Thus, we obtain the following:

\begin{corollary}\label{cor:jacobianmotivicfanovisitor}
For any smooth curve $X$, the Jacobian $\Jac(X)$ is a motivic Fano visitor.
\end{corollary}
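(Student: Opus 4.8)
The plan is to deduce this directly from Proposition~\ref{prop:XnismotivicFanovisitor} together with Vial's theorem \cite[Theorem 1.4]{Via13} on surjective morphisms and Chow motives, as indicated in the outline preceding the statement. First I would dispose of the degenerate low-genus cases: for $g = 0$ the Jacobian is a point, and for $g = 1$ the Abel-Jacobi map is an isomorphism, so in either case the claim reduces immediately to Proposition~\ref{prop:XnismotivicFanovisitor}. Henceforth I would assume $g \ge 2$ and fix an integer $n \ge g$, for concreteness $n = g$.

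Next I would consider the Albanese morphism $\mathrm{Alb} : X_n \to \Jac(X)$ (equivalently the Abel-Jacobi map, up to a translation). Since $n \ge g$, every degree-$n$ divisor class on $X$ is effective, so $\mathrm{Alb}$ is a surjective morphism of smooth projective varieties. By Vial's theorem the rational Chow motive $h(\Jac(X))$ is then a direct summand of $h(X_n)$, and hence $h(\Jac(X)) \otimes \LL^{\otimes n}$ is a direct summand of $h(X_n) \otimes \LL^{\otimes n}$, since tensoring by the Lefschetz motive preserves direct-sum decompositions.

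Finally I would invoke Proposition~\ref{prop:XnismotivicFanovisitor}, or rather its proof, which shows that for $r$ sufficiently large $h(X_n) \otimes \LL^{\otimes n}$ is a direct summand of $h(\rM_X(r, L))$, with $\rM_X(r, L)$ a smooth Fano variety. Because the relation ``is a direct summand of a Chow motive'' is transitive, it follows that $h(\Jac(X)) \otimes \LL^{\otimes n}$ is a direct summand of $h(\rM_X(r, L))$, which by definition says that $\Jac(X)$ is a motivic Fano visitor.

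I do not expect any serious obstacle here, since all the inputs are already established; the proof is essentially a two-line composition. The only points that require a little care are the surjectivity of the Albanese map, which is precisely what forces the choice $n \ge g$ rather than an arbitrary $n$, and the bookkeeping of the Lefschetz twist. The latter is harmless: the very notion of motivic Fano visitor allows a twist by Lefschetz motives, and the functor $- \otimes \LL^{\otimes n}$ commutes with passing to direct summands, so the twists introduced by Vial's theorem and by Proposition~\ref{prop:XnismotivicFanovisitor} combine without difficulty.
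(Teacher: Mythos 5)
Your proposal matches the paper's own argument: the paper likewise deduces the corollary from Proposition \ref{prop:XnismotivicFanovisitor} via Vial's theorem applied to the surjective Albanese map $X_n \to \Jac(X)$ for $n \ge g$, with the Lefschetz twist absorbed into the definition of motivic Fano visitor. Your additional remarks on the low-genus cases and the transitivity of direct summands are harmless elaborations of the same route.
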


On the other hand, Matsusaka's theorem says that every abelian variety admits a surjection from $\mathrm{Jac}(X)$ for some curve $X$ \cite{Mat52}. From Matsusaka's theorem and Vial's theorem, we have the following conclusion, which provides a shred of evidence for the affirmative answer to Question \ref{que:abelian}. 

\begin{corollary}\label{cor:abelianvarietymotivicFanovisitor}
Every abelian variety is a motivic Fano visitor.
\end{corollary}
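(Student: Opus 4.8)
The plan is to deduce the statement by chaining together three facts that are all available at this point in the paper: Corollary \ref{cor:jacobianmotivicfanovisitor}, which says every Jacobian is a motivic Fano visitor; Matsusaka's theorem \cite{Mat52}, that every abelian variety receives a surjective morphism from the Jacobian of some curve; and Vial's theorem \cite[Theorem 1.4]{Via13}, that a surjective morphism of smooth projective varieties realizes the target's rational Chow motive as a direct summand of the source's.

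Concretely, I would proceed as follows. Let $A$ be an abelian variety over $\CC$. First apply Matsusaka's theorem to obtain a smooth projective curve $X$ and a surjective morphism $\Jac(X) \to A$ (a surjective homomorphism of abelian varieties is in particular a surjective morphism of smooth projective varieties, as needed for Vial's theorem). Then Vial's theorem gives that the rational Chow motive $h(A)$ is a direct summand of $h(\Jac(X))$. Second, by Corollary \ref{cor:jacobianmotivicfanovisitor} there are a smooth Fano variety $W$ and an integer $m \ge 0$ with $h(\Jac(X)) \otimes \LL^{\otimes m}$ a direct summand of $h(W)$. Tensoring the splitting of $h(\Jac(X))$ by $\LL^{\otimes m}$ — which is an invertible object of the pseudo-abelian rigid tensor category of rational Chow motives, so it carries direct summands to direct summands — shows that $h(A) \otimes \LL^{\otimes m}$ is a direct summand of $h(\Jac(X)) \otimes \LL^{\otimes m}$, and hence of $h(W)$. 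By the definition of motivic Fano visitor, this is exactly what is wanted.

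I do not expect a genuine obstacle here: the essential work (the cohomological computations and the motivic identification underlying Proposition \ref{prop:XnismotivicFanovisitor}, and hence Corollary \ref{cor:jacobianmotivicfanovisitor}) has already been carried out. The only points deserving a sentence of care are that ``being a direct summand'' is transitive and stable under tensoring with a Lefschetz motive, both immediate from idempotent completeness of Chow motives, and that the hypotheses of Matsusaka's theorem are met over $\CC$. If one wanted a uniform bound on the dimension of the Fano host in terms of $\dim A$, the harder question would be to control the genus of the curve produced by Matsusaka's theorem together with the rank appearing in Corollary \ref{cor:jacobianmotivicfanovisitor}; but no such bound is claimed in the statement, so this can be left aside.
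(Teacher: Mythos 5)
Your proof is correct and follows exactly the paper's intended argument: the paper likewise deduces the corollary by combining Matsusaka's theorem, Vial's theorem \cite[Theorem 1.4]{Via13}, and Corollary \ref{cor:jacobianmotivicfanovisitor}. Your extra remarks on transitivity of direct summands and stability under tensoring with $\LL$ are points the paper leaves implicit, but there is no difference in substance.
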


%%%%%%%%%%%%%%%%%%%%%%%%%%%%%%%%%%%%%%%%

\bibliographystyle{alpha}

\end{document}